\long\def\comment#1{}
\def\Grp#1{\left(#1\right)}
\def\Cbr#1{\left\{#1\right\}}
\def\Sbr#1{\left[#1\right]}
\def\cf#1{\mathbf{1}\Cbr{#1}}
\def\cum#1#2{{#1}_1+\cdots+{#1}_{#2}}           % \cum X n
\def\eno#1#2{{#1}_1, \ldots, {#1}_{#2}}         % \eno X n
\def\gv{\,|\,}
\def\toi{\to\infty}
\def\Lsup{\varlimsup}
\def\dd{\mathrm{d}}
\def\nth#1{\frac{1}{#1}}
\def\Reals{\mathbb{R}}
\numberwithin{equation}{section}
\newtheorem{theorem}{Theorem}[section]
\newtheorem{prop}[theorem]{Proposition}
\newtheorem{cor}[theorem]{Corollary}
\newtheorem{lemma}[theorem]{Lemma}
\newtheorem{definition}[theorem]{Definition}
\def\rx{\varepsilon}
\def\levy{\text{L\'evy}}
\def\pdf{g}    % general prob density
\def\ldf{\varphi}  % Levy density of the characteristic measure
\def\Ldf{\Pi}  % Levy density of the characteristic measure
\def\fpt{\tau} % First passage time
\def\jp#1{\Delta_{#1}} % Jump process
\def\mean{\mathsf{E}}
\def\pr{\mathsf{P}}
\def\prs{(0,\infty)}
\def\nns{[0,\infty)}  % Nonnegatives
\def\dual#1{\overline{#1}}
\def\th{^{\rm th}}
\def\ddir{\text{Di}}
\def\dbeta{\text{Beta}}
\def\dexp{\text{Exp}}
\def\dgamma{\text{Gamma}}
\def\dunif{\text{Unif}}
\def\setto{\leftarrow}
\def\norm#1{\|{#1}\|}
\def\samtaq{samorodnitsky:taq:94}
\def\dfrac#1#2{{#1}/{#2}}
\def\dt{\downarrow}
\newenvironment{display}{
  \par\noindent\hrulefill
  \begin{enumerate}[topsep=0em, itemsep=0em, parsep=.25ex]
}{\end{enumerate}\vspace{-.5em}\hrulefill\medbreak}
\begin{document}
\begin{center}
  \large
  \textbf{On exact sampling of the first passage event of $\levy$
    process with infinite $\levy$ measure and bounded variation}  
  \\[1.5ex]
  \normalsize
  Zhiyi Chi \\
  Department of Statistics\\
  University of Connecticut \\
  Storrs, CT 06269, USA, \\[.5ex]
  E-mail: zhiyi.chi@uconn.edu \\[1ex]
  \today
\end{center}

\begin{abstract}
  We present an exact sampling method for the first passage event of a
  $\levy$ process.  The idea is to embed the process into another
  one whose first passage event can be sampled exactly, and then
  recover the part belonging to the former from the latter.  The
  method is based on several distributional properties that appear to
  be new.  We obtain general procedures to sample the first passage
  event of a subordinator across a regular non-increasing boundary,
  and that of a process with infinite $\levy$ measure, bounded
  variation, and suitable drift across a constant level or interval.
  We give examples of application to a rather wide variety of
  $\levy$ measures.

  \textit{Keywords and phrases.}  First passage; $\levy$ process;
  bounded variation; subordinator; creeping; Dirichlet distribution

  \medbreak\noindent
  2000 Mathematics Subject Classifications: Primary 60G51; Secondary
  60E07.
\end{abstract}

\section{Introduction} \label{sec:intro}
The first passage event of a $\levy$ process is an important topic in
applied probability and has received extensive study \cite[cf.][and
references therein]{bertoin:96, bertoin:99, sato:99, doney:02,
  doney:06, huzak:04, kluppelberg:04, alili:05, eder:09}.  However,
although practically important and conceptually interesting, its 
exact sampling remains a subtle issue.  In particular, when the
$\levy$ measure of a process has infinite integral, except for a few
well-known cases, the distribution of the process is not analytically
available, which poses a significant hurdle to the exact sampling of
its first passage event.

Real-valued $\levy$ processes with bounded variation form a large
class.  Since each such process is the difference between two
independent subordinators, i.e., non-decreasing $\levy$ processes,
many properties of $\levy$ processes with bounded variation can be
deduced from those of subordinators.  By themselves, subordinators not
only play a significant role in the theory of $\levy$ processes
\cite{bertoin:96, doney:07, kyprianou:06}, but also have important
applications \cite{reynolds:71, kalbfleisch:78, hjort:90}.  As in the
general case, for most subordinators with infinite $\levy$ measure,
exact sampling methods for the first passage event are lacking,
although differential equations can be used to evaluate some 
parameters involved \cite{veillette:10:mcap, veillette:10:spl}.

In this paper, we present a method to sample the first passage event
for a rather wide range of real-valued $\levy$ processes with bounded
variation.  From now on, by sampling we always mean exact sampling.
The method can sample 1) the first passage event of a subordinator
across a regular non-increasing boundary, where the meaning of
``regular'' is specified in Section \ref{sec:fpe-sub}, 2) the first
passage event of a $\levy$ process with bounded variation and
non-positive drift across a positive constant level, and 3) the 
first exit event of a $\levy$ process with bounded variation and no
drift out of a closed interval with 0 as an internal point.  As a
by-product, the method also provides a way to sample infinitely
divisible random variables alternative to one in \cite{chi:12}.

We first give an overview of the method for subordinators, which is
the main issue, and then an overview of its extension to real-valued
$\levy$ processes with bounded variation.

\begin{figure}[t]
  \caption{\label{fig:sub-fpt}
    Sampling of the first passage event of a subordinator by embedding
    it into a ``carrier'' subordinator; see Section
    \ref{ssec:overview} for detail.
  }
  \begin{center}
    \setlength{\unitlength}{1mm}
    \begin{picture}(60,50)(2,-5)
      \put(0,0){\scalebox{.3}{\includegraphics{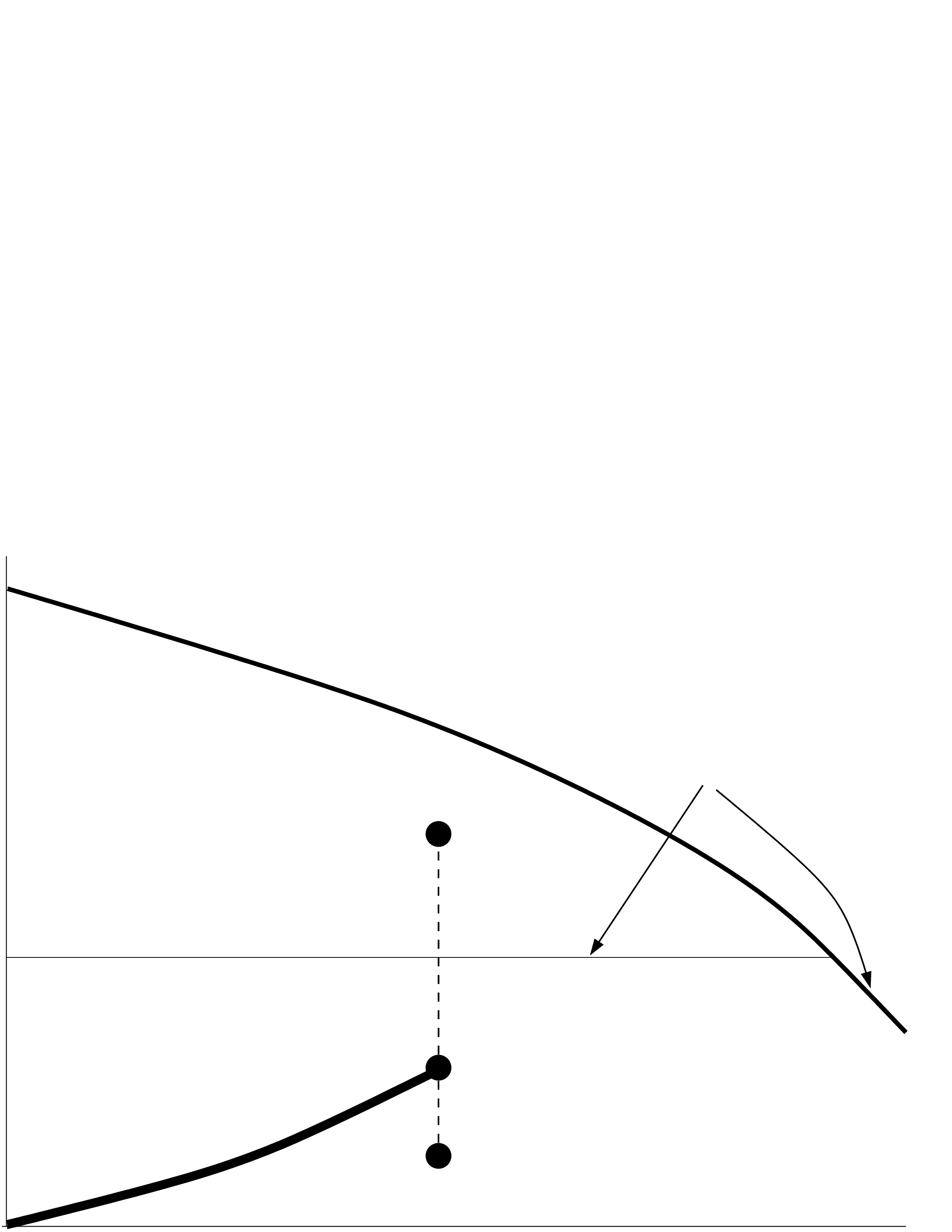}}}
      \put(31,19.5){$\jp S$}
      \put(31,10){$S(\fpt-)$}
      \put(20.5,27){$S(\fpt)$}
      \put(31,2){$Z(\fpt-)$}
      \put(10,35){$c(t)$}
      \put(-2,18){$r$}
      \put(60,2){$t$}
      \put(38,32){target boundary}
      \put(30,-5){(a)}
    \end{picture} \\[1em]
    \begin{picture}(60,43)(0,2)
      \put(0,0){\scalebox{.3}{\includegraphics{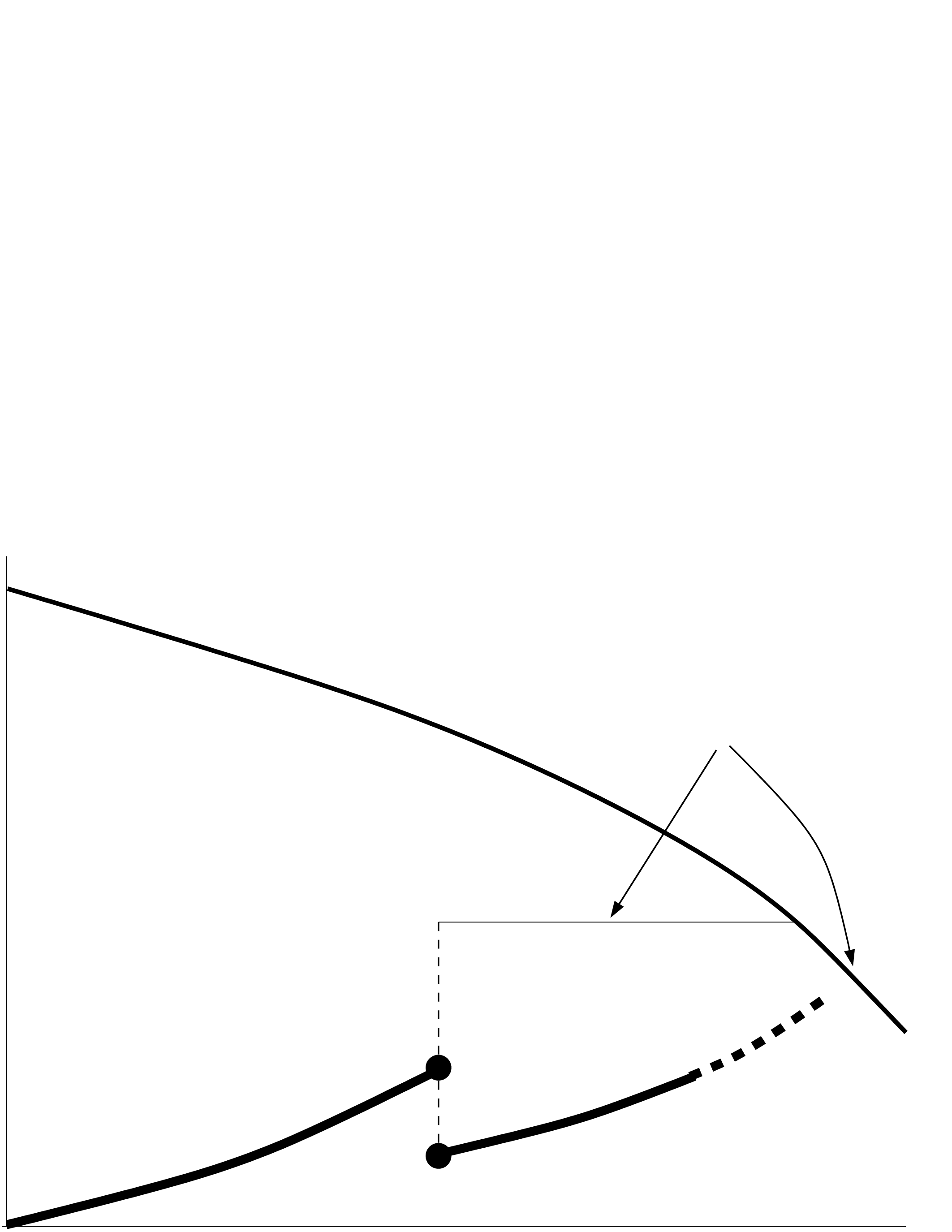}}}
      \put(5,20){$\jp S>r$ or}
      \put(5,14){$\jp S = \jp{X_2}$}
      \put(31,10){$S(\fpt-)$}
      \put(31,2){$Z(\fpt)=Z(\fpt-)$}
      \put(10,35){$c(t)$}
      \put(27,23){$r+Z(\fpt)$}
      \put(60,2){$t$}
      \put(34,34){target boundary}
      \put(30,-5){(b)}
    \end{picture}
    \hspace{4ex}
    \begin{picture}(60,43)(2,2)
      \put(0,0){\scalebox{.3}{\includegraphics{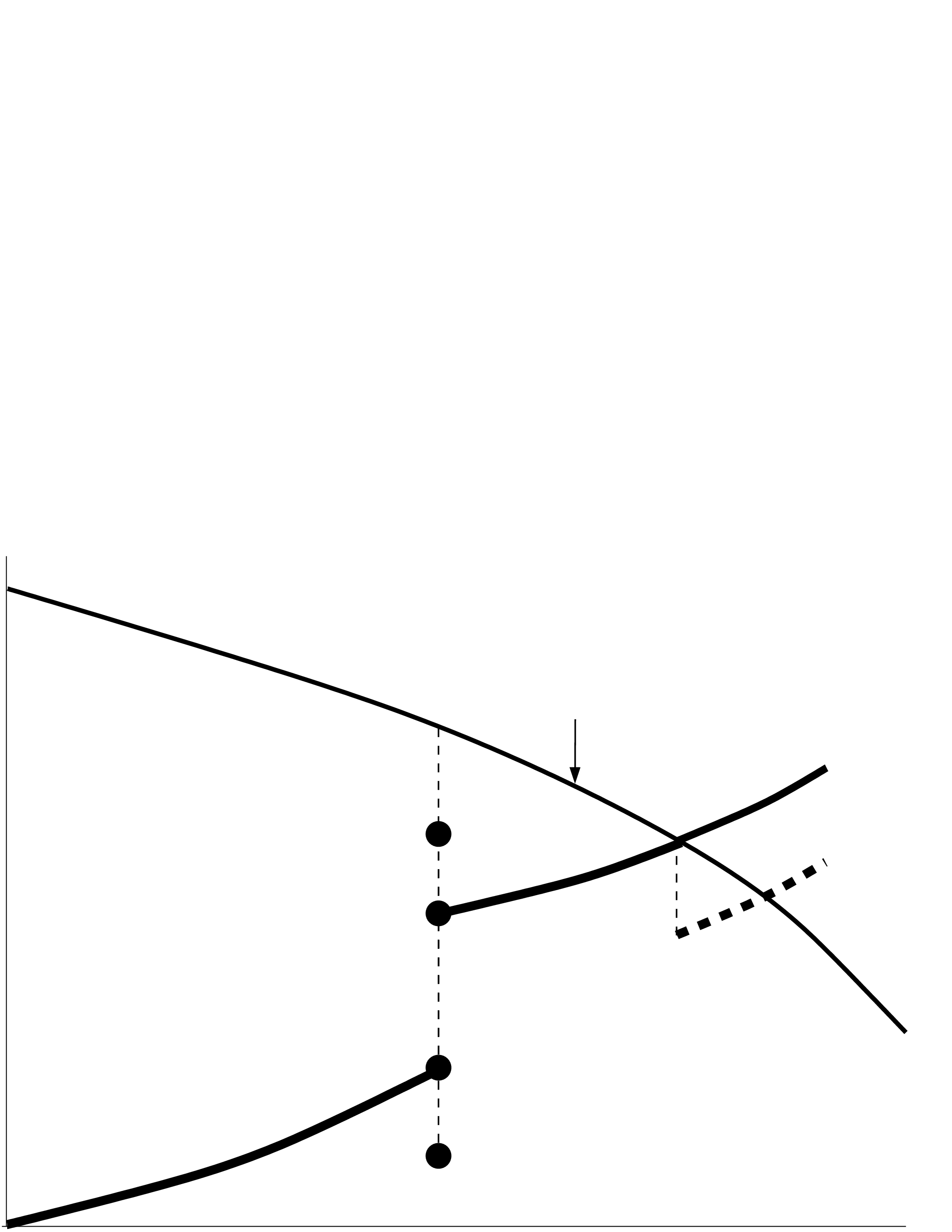}}}
      \put(5,14){$\jp S = \jp Z$}
      \put(20.5,27){$S(\fpt)$}
      \put(20.5,19){$Z(\fpt)$}
      \put(31,10){$S(\fpt-)$}
      \put(31,2){$Z(\fpt-)$}
      \put(10,35){$c(t)$}
      \put(60,2){$t$}
      \put(30,-5){(c)}
      \put(30,36){target boundary}
      \put(45,28){$\star$}
      \put(57,29){$A$}
      \put(57,22){$B$}
    \end{picture}
  \end{center}
\end{figure}

\subsection{Overview of method for subordinators}\label{ssec:overview}
In a nutshell, the idea is to embed a subordinator of interest into a
``carrier'' subordinator whose first passage event is tractable, and
to utilize the latter to sample for the former.  To put the idea into
perspective, think of a $\levy$ measure that can be
decomposed as
\begin{align}  \label{eq:h0}
  \ldf(x)\,\dd x + \chi(\dd x), \quad
  \text{with}\ \ \ldf(x) = \cf{0<x\le r}\gamma e^{-q x}
  x^{-\alpha-1},
\end{align}
where $r$, $\gamma>0$, $q\ge 0$, $\alpha\in (0,1)$, and $\chi$ is a
finite measure on $\prs$.  This type of $\levy$ measures coincide with
those that can be decomposed as $\tilde\ldf(x)\,\dd x + \chi(\dd x)$
with $\tilde\ldf(x)=(\gamma +O(x)) x^{-\alpha-1}$ as $x\dt 0$, and
give rise to many interesting processes discovered recently
\cite{caballero:10, kuznetsov:11, kyprianou:10, kuznetsov:12,
  kuznetsov:10, kuznetsov:10b}, such as Lamperti-stable process, whose
$\levy$ density is $\cf{x>0} e^{\beta x} (e^x-1)^{-\alpha-1}$, $\beta
< \alpha +1$.  For this particular process, we can set $r=\infty$ in
\eqref{eq:h0}.  However, in general, $r$ has to be finite.

A subordinator with $\levy$ density \eqref{eq:h0} can be represented
as the sum of two independent subordinators, one with $\levy$ density
$\ldf$, the other with $\levy$ measure $\chi$.  Since the latter is
compound Poisson and only poses minor problems, we shall ignore it 
altogether here.  Denoting by $Z$ the subordinator with $\levy$
density $\ldf$, it can be embedded into a stable subordinator as
follows.  Let $X_2$ and $X_3$ be independent subordinators
which are also independent from $Z$ and have $\levy$ densities
$\cf{0<x\le r} (1-e^{-q x}) x^{-\alpha-1}$ and $\cf{x>r}
x^{-\alpha-1}$, respectively.  Then $S = Z + X_2 + X_3$ is a stable
subordinator with index $\alpha$ \cite{\samtaq}.  One can expect that
the first passage event of $S$ is tractable, which indeed is the case.
Supposing the first passage event of $S$ is sampled, we then have to
recover the part due to $Z$ from the sample values.  This is the main
issue we have to address.

In general, let $Z$ be a subordinator and $c$ a non-increasing
function on $(0,\infty)$.   If $Z$ has positive drift $d>0$, then we
can consider $Z(t) - d t$ and $c(t) - d t$ instead.  Therefore,
without loss of generality, let $Z$ be a pure jump process.  Our
method requires some regularity of $c$.  For now we ignore the issue
and consider how to jointly sample the random variables $T = \Cbr{t>0:
  Z(t)>c(t)}$, $Z(T-)$, and $Z(T)$.  Suppose the $\levy$ measure of
$Z$ can be written as $\cf{0<x\le r} e^{-q x} \Lambda(\dd x)$, where
$0<r\le\infty$ and $q\ge 0$, such that the $\levy$ measure
$\Lambda(\dd x)$ gives rise to a subordinator $S$ whose first passage
event across any regular non-increasing boundary on $(0,\infty)$ can
be sampled.  Represent $S$ as $Z + X_2 + X_3$, such that $Z$, $X_2$
and $X_3$ are independent, with $X_2$ and $X_3$ having $\levy$
measures $\cf{0<x\le r} (1-e^{-q x}) \Lambda(\dd x)$ and $\cf{x>r}
\Lambda(\dd x)$, respectively.  Like $Z$, assume $X_2$ and $X_3$ are
pure jump processes.

The scheme of the method is shown in Fig.~\ref{fig:sub-fpt}.  To start
with, instead of $c(t)$, let $b(t)=c(t)\wedge r$ be the ``target
boundary'' for $S$ to cross and $\fpt$ the corresponding first
passage time.  By assumption, we can sample $(\fpt, S(\fpt-),
S(\fpt))$.  Observe the following simple but crucial fact: since
$S(\fpt-) \le b(\fpt)\le r$, $S$ can only have jumps of size no
greater than $r$ in $(0, \fpt)$.  Thus $Z(\fpt-) + X_2(\fpt-)
= S(\fpt-)$.  Now given $\fpt=t$ and $S(\fpt-)=s\le r$, we have to
sample $Z(\fpt-)$, which is possible for two reasons.  First,
the conditional distribution of $Z(\fpt-)$ is the same as that of
$Z(t)$ given $S(t)=s$, as if $t$ is fixed beforehand (cf.\ Section
\ref{sec:distribution}).  Second, using the properties of exponential
tilting and upper truncation of $\levy$ measures on $\prs$, the latter
conditional distribution can be sampled (cf.\
Section~\ref{sec:issues}).  In panel (a) of Fig.~\ref{fig:sub-fpt},
the sampled $Z(\fpt-)$ is less than $S(\fpt-)$.  However, since $X_2$
is compound Poisson, $Z(\fpt-)$ can be equal to $S(\fpt-)$ with
positive probability.  Having got $Z(\fpt-)$, we next sample
$Z(\fpt)$.  The jump of $S$ at $\fpt$ is $\jp S = S(\fpt) - S(\fpt-)$.
By independence, only one of $Z$, $X_2$ and $X_3$ can have a jump at
$\fpt$, so $\jp Z$ is either 0 or $\jp S$.  Fig.~\ref{fig:sub-fpt}
illustrates two scenarios.  If $\jp S>r$, then clearly it belongs to
$X_3$, giving $\jp Z=0$.  If $\jp S\le r$, then by comparing the
$\levy$ measures of $Z$ and $X_2$, with probability $e^{-q \jp S}$
(resp.\ $1-e^{-q \jp S}$), $\jp S$ belongs to $Z$ (resp.\ $X_2$),
giving $\jp Z=\jp S$ (resp.\ $\jp Z=0$) (cf.\ Sections
\ref{sec:distribution} and \ref{sec:fpe-sub}).  Thus $Z(\fpt) =
Z(\fpt-)+\jp Z$ can be sampled.  If $Z(\fpt)<c(\fpt)$, then by strong
Markov property, we can renew the procedure, but now with starting
time point at $\fpt$ and starting value of $S$ equal to $Z(\fpt)$.  As
can be expected, the procedure eventually stops, giving a sample value
of $(T, Z(T-), Z(T))$.

Note that, if $c$ is decreasing, then it is possible for $S$ to creep
across $c$, i.e., $\jp S=0$, as marked by $\star$ in panel (c).  In
this case, although $\jp Z=0$, if $q>0$, it is possible that $Z(\fpt)
< S(\fpt)$ and so the procedure has to be renewed; see the scenario
marked by $B$ in panel (c).  The characterization of creeping when $c$
is linear is known \cite{bertoin:96, griffin:11}.  However, the case
where $c$ is non-linear appears to be still unresolved.

To implement the scheme, one can first sample $\fpt$, then $(S(\fpt-),
S(\fpt))$ conditional on $\fpt$, and finally $(Z(\fpt-), Z(\fpt))$
conditional on $(\fpt, S(\fpt-), S(\fpt))$.  Among these samplings,
the one for $\fpt$ typically is the simplest.  The other samplings
require several theoretical results, which will be obtained in Section
\ref{sec:distribution}.  Finally, it is easy to introduce a terminal
point $K\le\infty$ and sample $(T', Z(T'-), Z(T'))$, with $T'=T\wedge
K$.  In particular, if $K=1$ and $c\equiv\infty$, the method samples
an infinitely divisible random variable with upper truncated $\levy$
measure $\cf{0<x\le r} e^{-q x} \Lambda(\dd x)$.  As a passing remark,
the so-called Vervaat perpetuity correspond to $q=0$ and
$\Lambda(\dd x) = a\,\dd x$, whose exact sampling is known
\cite{devroye:10, fill:10}.

\begin{figure}[t]
  \caption{\label{fig:nd-fpt}Sampling of the first passage event of 
    $Z=Z^+-Z^-$ across $a>0$, where $Z^\pm$ are independent
    subordinators.}
  \begin{center}
    \setlength{\unitlength}{1mm}
    \begin{picture}(96,55)(0,6.5)
      \put(-3,0){$O$}
      \put(0,0){\scalebox{.5}[.45]{\includegraphics{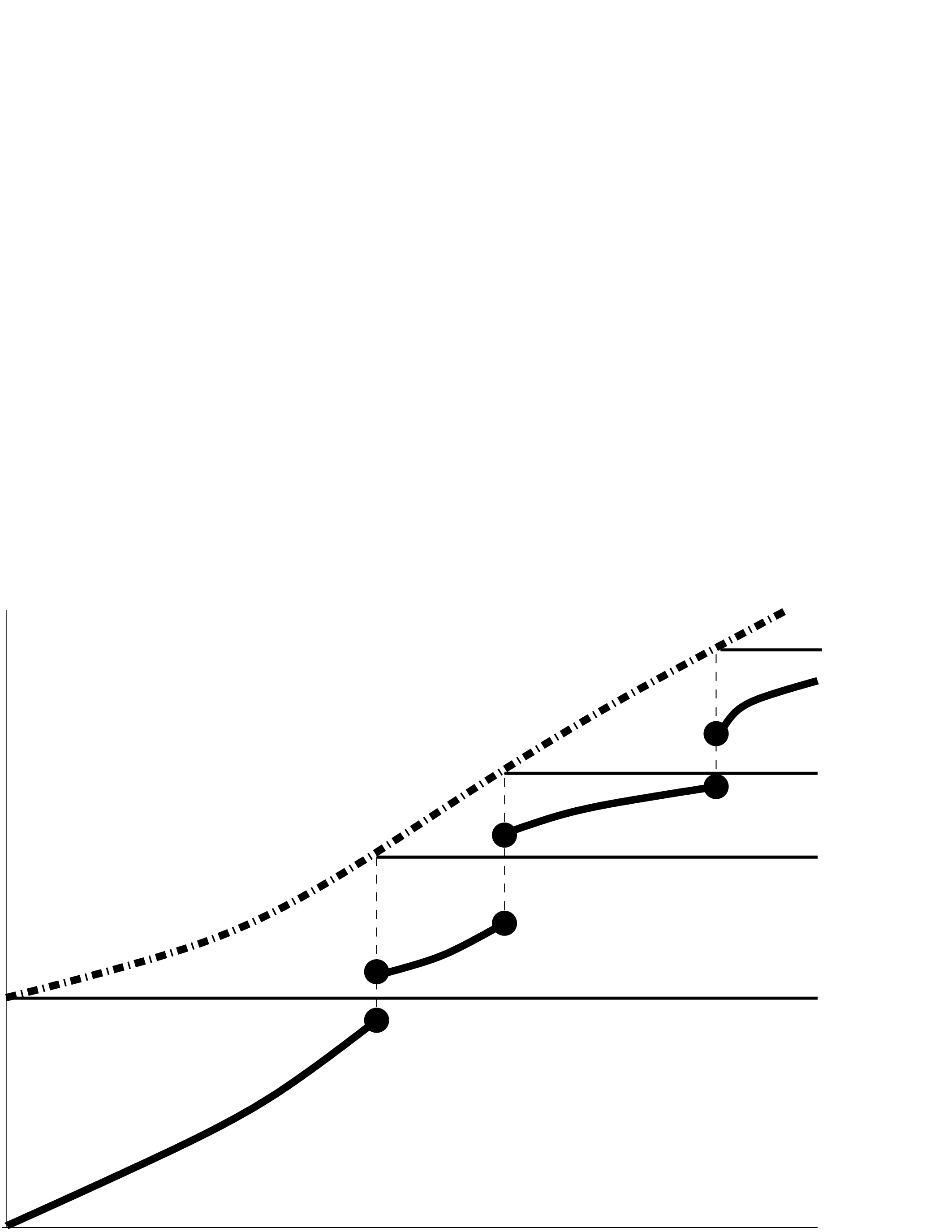}}}
      \put(-3,24){$a$}
      \put(40,16){$Z^+(\fpt_1-)$}
      \put(29,27){$Z^+(\fpt_1)$}
      \put(23,40){$a+Z^-(\fpt_1)$}
      \put(56,26){$Z^+(\fpt_2-)$}
      \put(39,50){$a+Z^-(\fpt_2)$}
      \put(78,40){$Z^+(\fpt_3-)$}
      \put(62,62){$a+Z^-(\fpt_3)$}
      \put(62,19){$1^{\rm st}$ target boundary}
      \put(62,34){$2^{\rm nd}$ target boundary}
      \put(90,1){$t$}
    \end{picture}
  \end{center}
\end{figure}

\subsection{Overview of extensions} \label{ssec:overview2}
The method described in Section \ref{ssec:overview} can be extended to
$\levy$ process with bounded variation, as each such process is the
difference of two independent subordinators.  Fig.~\ref{fig:nd-fpt}
illustrates the sampling of the first passage event across a constant
level $a>0$ by a $\levy$ process with \emph{non-positive\/} drift.
Write the process as $Z^+ - Z^-$, where $Z^\pm$ are independent
subordinators, with $Z^+$ having no drift.  The sampling can be
thought of as having $Z^+$ to ``catch up'' with $a+Z^-$.  To start
with, let $a$ be the target boundary for $Z^+$ to cross and $\fpt_1$
the corresponding first passage time.  It is evident that before
$\fpt_1$, $Z^+$ stays below $a+Z^-$.  However, at $\fpt_1$, since
$Z^+$ has a jump, it is possible for $Z^+$ to pass $a+Z^-$.   We can
use the method in Section \ref{ssec:overview} to sample $Z^+(\fpt_1)$.
Meanwhile, since $Z^-$ is independent of $\fpt_1$, we can use the
modification mentioned at the end of Section \ref{ssec:overview} to
sample $Z^-(\fpt_1)$.  If $a+Z^-(\fpt_1)< Z^+(\fpt_1)$, then $\fpt_1$
is the first passage time of $Z^+-Z^-$ across $a$.  If $a +
Z^-(\fpt_1)>Z^+(\fpt_1)$, then set $a + Z^-(\fpt_1)$ as the new target
boundary for $Z^+$ to cross, this time with $\fpt_1$ as the starting
time point and $Z^+(\fpt_1)$ the starting value for $Z^+$.  As long as
$\Lsup_{t\toi} Z(t) =\infty$ w.p.~1, the procedure eventually stops
(cf.\ Section \ref{sec:fpe-bv}).  Here the assumption that $Z$ has
non-positive drift is critical, since otherwise $Z^+$ can creep across
$a+Z^-$ with positive probability.  When this happens, the first
passage times of $Z^+$ across the target boundaries shown in
Fig.~\ref{fig:nd-fpt} will converge but never be equal to the first
passage time of $Z$ across $a$, resulting in the iteration going on
forever.  It can be seen that if $T=\inf\{t>0: Z^+(t) - Z^-(t)>a\}$,
then the procedure samples $(T, Z^+(T-), Z^+(T), Z^-(T))$.  As in
Section \ref{ssec:overview}, a terminal point $K\le \infty$ can be
introduced so that one can sample $(T', Z^+(T'-), Z^+(T'), Z^-(T'))$,
where $T' = T\wedge K$.  If $K<\infty$, the procedure eventually stops
w.p.~1, whether or not $\Lsup_{t\toi} Z(t)=\infty$.

\begin{figure}[t]
  \caption{\label{fig:bv-fpt}A ``phase plot'' for the sampling of the
  first exit event of $Z=Z^+-Z^-$ out of a fixed interval $[-a^-,
  a^+]$, where $a^\pm>0$ and $Z^\pm$ are independent subordinators.}
  \begin{center}
    \setlength{\unitlength}{.9mm}
    \begin{picture}(70,65)(2,5)
      \put(0,0) {
        \scalebox{.495}{\includegraphics{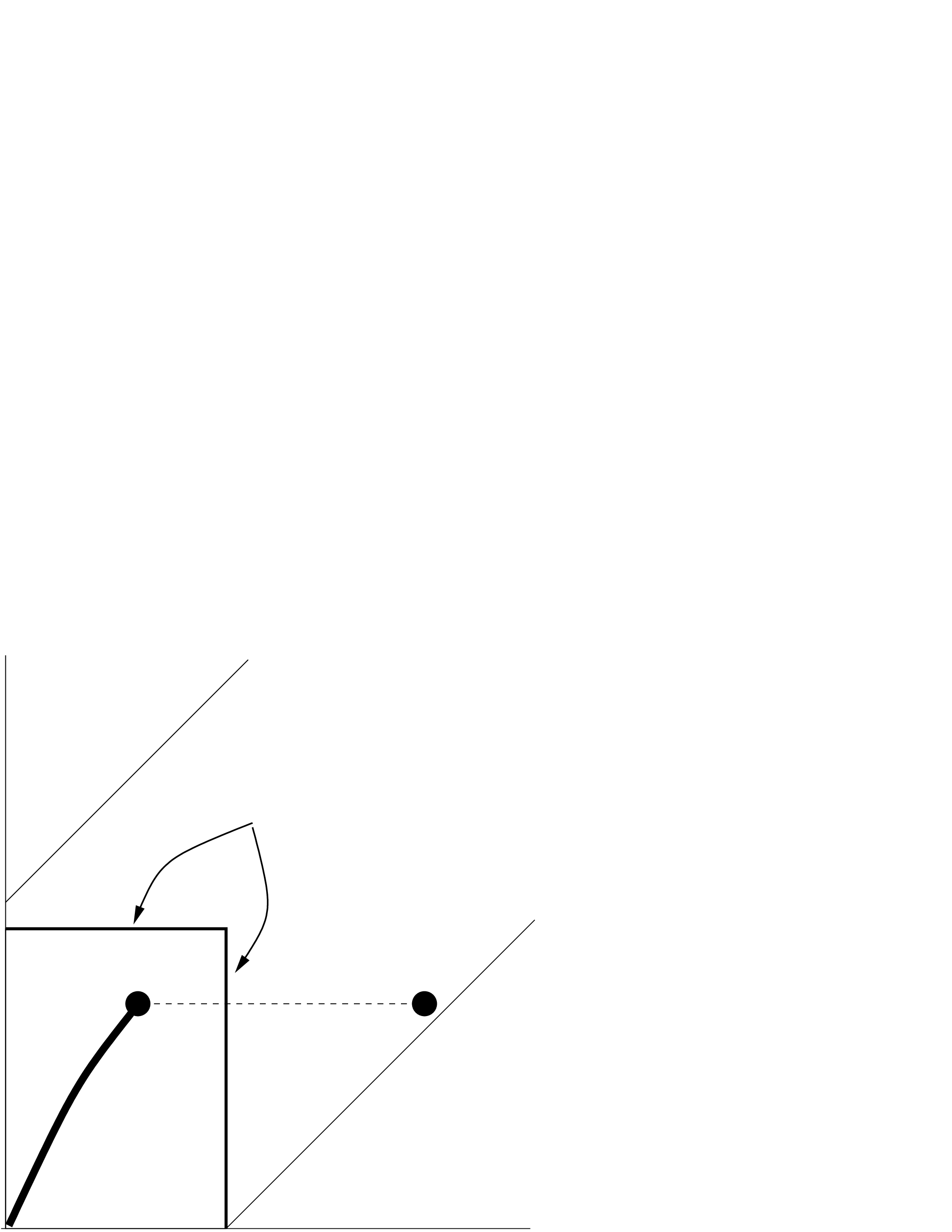}}
      }
      \put(5,49){\rotatebox{45}{$y-x=a^+$}}
      \put(42,8){\rotatebox{45}{$y-x=-a^-$}}
      \put(3,68){$y$}
      \put(63,1){$x$}

      \put(14,32){$S(\fpt-)$}
      \put(49,32){$S(\fpt)$}
      \put(31.5,24){$\jp{S^-}>r^-$}
      \put(31.5,19.5){$\jp{S^+}=0$}
      \put(22,53){target boundary}
      \put(3, 39){$a^+$}
      \put(24, 1.3){$a^-$}
      \put(35,-4){(a)}
    \end{picture}
    \begin{picture}(70,65)(-2,5)
      \put(0,0) {
        \scalebox{.495}{\includegraphics{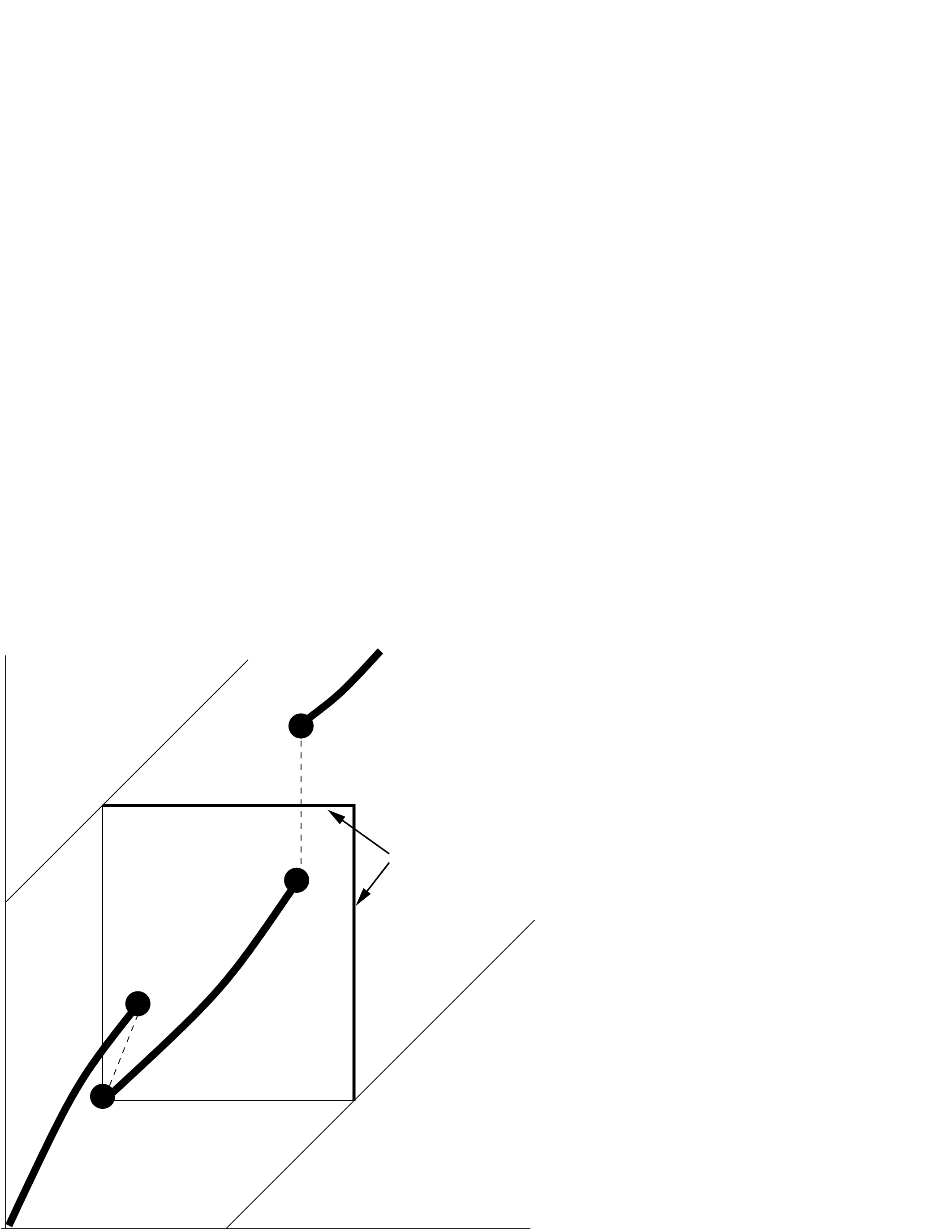}}
      }
      \put(5,49){\rotatebox{45}{$y-x=a^+$}}
      \put(42,8){\rotatebox{45}{$y-x=-a^-$}}
      \put(3,68){$y$}
      \put(63,1){$x$}
      \put(15,32){$S(\fpt-)$}
      \put(10,11){$W(\fpt) = W(\fpt-)$}
      \put(41,60){$W(\fpt')=S(\fpt')$}
      \put(51,48){target}
      \put(51,43){boundary}
      \put(3, 39){$a^+$}
      \put(26, 1.3){$a^-$}
      \put(35,-4){(b)}
    \end{picture}
  \end{center}
\end{figure}

It is also possible to sample the first exit event of $Z = Z^+-Z^-$
out of a fixed interval $[-a^-, a^+]$ with $a^\pm>0$ if it has no
drift.  To see how the method in Section \ref{ssec:overview} can
be extended to this case, it is convenient to consider the ``phase
plot'' of the $\levy$ process $W=(Z^-, Z^+)$, 
which shows the trajectory of $W$ on the $x$-$y$ plane without time 
axis.  Then the first exit of $Z$ out of $[-a^-, a^+]$ can be depicted
as the first exit of $W$ out of the band $\{(x,y): -a^- \le y-x \le
a^+\}$.  Suppose $Z^\pm$ each satisfies the assumption in
Section~\ref{ssec:overview}, so that, for example, $Z^+$ has $\levy$
measure $\cf{0<x \le r^+} \exp\{-q^+ x\} \Lambda^+(\dd x)$, where
$\Lambda^+(\dd x)$ is the $\levy$ measure of a subordinator $S^+$
whose first passage event across any constant level can be sampled.
To start with, let $b^-=a^-\wedge r^-$, $b^+=a^+\wedge r^+$, and set
the top and right sides of the rectangle $[0, b^-] \times [0, b^+]$ to
be the target boundary.  In panel (a) of Fig.~\ref{fig:bv-fpt},
$r^->a^-$ and $r^+<a^+$, resulting in the rectangle as shown.  Now
sample the first passage event of $S=(S^-, S^+)$ across the target
boundary.  To do this, we can first independently sample the first
passage times of $S^\pm$ across $b^\pm$.  If, as shown in panel (a),
$S^-$ makes a crossing at time $\tau$ before $S^+$, then sample
$(S^-(\fpt-), S^-(\fpt))$, and sample $S^+(\fpt-)=S^+(\fpt)$
conditional on $S^+(\fpt) < b^+$.  We next can use the method in
Section \ref{ssec:overview} to recover $Z^\pm(\fpt)$.  In the scenario
shown in Fig.~\ref{fig:bv-fpt}, since the jump of $S^-$ at $\fpt$
is greater than $r^-$, it is not part of $Z^-$, and so we end up with
$W(\fpt)$ as in panel (b).  The procedure is then renewed.  As
long as $Z\not\equiv 0$, the procedure will stop eventually and we get
the first exit event of $Z$ (cf.\ Sections~\ref{sec:distribution} and
\ref{sec:fpe-bv}).  It can be seen that if $T=\inf\{t>0: Z(t) \not\in
[-a^-, a^+]\}$, then the procedure samples $(T, Z^\pm(T-), Z^\pm(T))$.
As in Section \ref{ssec:overview}, a terminal point $K\le
\infty$ can be introduced so that one can sample $(T', Z^\pm(T'-),
Z^\pm(T'))$ with $T' =  T\wedge K$.

\subsection{Organization of the paper}
Section \ref{sec:prelim} fixes notation and recalls preliminary
results.  Section \ref{sec:distribution} obtains distributional
properties needed by the above methods.  The basic tools are results
on subordinators and fluctuation theory \cite[cf.][]{bertoin:96},
although some recent developments on the first passage event of a
general $\levy$ process could be exploited \cite[cf.][]{doney:07,
  doney:06, eder:09, kyprianou:06}.  In addition, we need to get some
detail on various conditional distributions of a subordinator as well
as on creeping.  Sections \ref{sec:fpe-sub}--\ref{sec:issues} present
procedures to implement the methods and show their validity, and
identify major sampling issues involved.  Sections
\ref{sec:stable}--\ref{sec:gamma} show examples of application of the
procedures to several types of $\levy$ measures.  The first type is
given in \eqref{eq:h0}.  The second type consists of finite sums of
$\levy$ measures of the form \eqref{eq:h0}, which require additional
techniques.  The third type consists of $\cf{0<x\le r} x^{-1}
e^{-x}\,\dd x + \chi(\dd x)$, with $\chi$ a finite measure on $\prs$.
This type gives rise to the aforementioned Vervaat perpetuity and the
Beta process in survival analysis, which has $\levy$ density $\cf{x>0}
e^{-c x}/(1-e^{-x})$ with $c>0$ and belongs to the Beta-class
processes \cite{hjort:90, kim:lee:03as, laud:98, kuznetsov:10}.
Rejection sampling and the Dirichlet distribution play an important
role in these examples \cite{devroye:86b, liu:01:book, glasserman:04}.

\section{Preliminaries} \label{sec:prelim}
For $x=(\eno x d)\in \Reals^d$, denote by $\norm x$ its $L^1$ norm
$\sum |x_i|$.  The 
convention $\inf\emptyset = \infty$ will be used 
all along; ``p.d.f.''\ will stand for ``probability density function
with respect to Lebesgue measure''.  For $a$, $b>0$, $\dbeta(a, b)$
denotes the distribution with p.d.f.\ $\cf{0<x<1}
x^{a-1}(1-x)^{b-1}/B(a,b)$, where $B(a,b) =
\Gamma(a)\Gamma(b)/\Gamma(a+b)$, $\dgamma(a,b)$ denotes the
one with p.d.f.\ $\cf{x>0} x^{a-1} e^{-x/b}/[b^a \Gamma(a)]$, and
$\dexp(b)$ denotes the one with p.d.f.\ $\cf{x>0}
e^{-x/b}/b$.  Finally, $\dunif(0,1)$ denotes the uniform distribution
on $(0,1)$.

Let $\tilde\nu$ and $\nu$ be two probability measures on a measurable
space $\Omega$ satisfying $\dd\tilde\nu/\dd\nu \propto \varrho\le C$,
where $\varrho\ge 0$ is a known function and $C>0$ a known constant.
Then $\tilde\nu$ can be sampled as follows: keep sampling $\xi\sim
\nu$ and $U\sim \dunif(0,1)$ until $C U\le \varrho(\xi)$.  This
procedure is the well-known rejection sampling, which is exact and
stops w.p.~1 \cite{devroye:86b, liu:01:book, glasserman:04}.

Let $\nu$ be an infinitely divisible distribution on $\prs$ with
$\levy$ measure $\Lambda$.  Given $q>0$, $\Lambda_q(\dd x) = e^{-q
  x}\Lambda(\dd x)$ is known as an exponentially tilted version of
$\Lambda$.  If $\nu_q$ is the infinitely divisible distribution with
$\levy$ measure $\Lambda_q$, then $\nu_q(\dd x) \propto e^{-q x}
\nu(\dd x)$ \cite{hougaard:86, aalen:92, brix:99, bertoin:96}.  By
setting $\varrho(x) = e^{-q x}\cf{x>0}$, rejection sampling can be
used to sample $\nu_q$ based on $\nu$.

The Dirichlet distribution $\ddir(\eno a k)$ with parameters $\eno a
k>0$ is a generalization of the Beta distribution.  It can be defined
as a distribution on $\Reals^k$, such that for any measurable function
$g\ge 0$ on $\Reals^k$ and $\omega\sim \ddir(\eno a k)$,
\begin{align*}
  \mean [g(\omega)]
  &
  =
  \frac{\Gamma(\cum a k)}{\Gamma(a_1)\cdots \Gamma(a_k)}
  \int \cf{\text{all}\ x_i\ge 0} g(x)
  \prod_{i=1}^k x_i^{a_i-1}\,\dd x_1\cdots \dd x_{k-1},
\end{align*}
where in the integral $x_k=1-x_1-\cdots-x_{k-1}$ instead of a variate,
and $x=(\eno x k)$.  The distribution has p.d.f.\ $\Gamma(\cum a
k)\prod_{i=1}^k x_i^{a_i-1}/\prod_{i=1}^k \Gamma(a_i)$ with respect to
the degenerate measure $\sigma_k(\dd x) = \cf{x_i\ge 0,\, \norm x =1}
\dd x_1 \cdots \dd x_{k-1}\, \delta(\dd x_k + \cum x {k-1}-1)$, where
$\delta$ is the Dirac measure at 0.  For convenience, we will refer to
it as the p.d.f.\ of $\ddir(\eno a k)$.  Also, if $k=1$, then for
$a>0$, define $\ddir(a)$ to be the Dirac measure at 1.

\section{Distributions of the first passage event}
\label{sec:distribution}
We need several distributional properties to implement the method
introduced in Section~\ref{sec:intro}.  The main issue is, provided a
subordinator $Z$ can be embedded into another subordinator $S =
Z+X_2+X_3$, how to recover the first passage event of $Z$ from the one
sampled for $S$.  As noted in Section \ref{ssec:overview}, we need to
take into account the possibility that $S$ creeps across a boundary.
Also, for both the method for subordinators and its extension to
$\levy$ processes, we need to make sure the corresponding sampling
procedures eventually stop.

\subsection{Results for subordinators}
Let $X=(\eno X d)$ be a $\levy$ process taking values in $\nns^d$ with
$\levy$ measure $\Ldf$ and Laplace exponent $\int (1-e^{-\langle
  \theta, x\rangle})\Ldf(\dd x)$, $\theta \in \nns^d$.  In the
application later, $X_1=Z$, $d=3$, and $X_i$ are independent.  By
definition, $\Ldf$ has no mass at $\{0\}$.  Let $\jp X$ be the jump
process of $X$.  The process $S = \norm X = X_1 + \cdots + X_d$ is a
subordinator with $\levy$ measure $\Ldf_S(\dd x) = \int_{\nns^d}
\cf{\norm z\in\dd x} \Ldf(\dd z)$, $x\in \prs$, and jump process $\jp
S = \norm{\jp X}$.  In the rest of this section, we shall always
assume
\begin{align} \label{eq:inf-levy}
  \Ldf_S\prs = \infty.
\end{align}
Denote $\dual\Ldf_S(x) = \Ldf_S(x,\infty)$, and given $c\in
C\prs$,
\begin{align} \label{eq:passage}
  \fpt_c = \fpt^S_c = \inf\Cbr{t>0: S(t)>c(t)}.
\end{align}
If $c(t)\equiv a>0$, the notation $\fpt_a$ will be used instead of
$\fpt_c$.  To implement the method introduced in Section
\ref{ssec:overview},  we will first sample $\fpt_c$, which is often
easy, then $(S(\fpt_c-), \jp S(\fpt_c))$ conditional on $\fpt_c$, and
finally $(X(\fpt_c-), \jp 
X(\fpt_c))$ conditional on $(\fpt_c$, $S(\fpt_c-)$, $\jp S(\fpt_c))$.
Among the results below, part 2) of Theorem \ref{thm:joint} will be
used for the conditional sampling of $(X(\fpt_c-), \jp X(\fpt_c))$,
while Theorem \ref{thm:conditional} for the conditional sampling of
$(S(\fpt_c-), \jp S(\fpt_c))$. 
\begin{theorem} \label{thm:joint}
  Let $c\in C\prs$ be non-increasing with $c(0+)>0$ and
  \eqref{eq:inf-levy} hold.

  1) Let $\Omega = \prs \times \nns^d \times
  (\nns^d\setminus\{0\})$.  For $(t,u,v)\in \Omega$,
  \begin{align}
    &
    \pr\Cbr{
      \fpt_c\in\dd t,\, X(\fpt_c-)\in\dd u,\, \jp X(\fpt_c)\in\dd v
    }
    \nonumber \\
    &\quad\qquad
    = \cf{0\le c(t) -\norm u<\norm v}
    \dd t\,\pr\Cbr{X(t)\in \dd u} \Ldf(\dd v),
    \label{eq:joint-jump} \\
    &
    \pr\Cbr{
      \fpt_c\in\dd t,\, X(\fpt_c-)\in\dd u,\, \jp X(\fpt_c)=0
    }
    \nonumber\\
    &
    \quad\qquad
    =\pr\Cbr{\fpt_c\in \dd t,\, S(\fpt_c)=c(\fpt_c)}
    \pr\Cbr{X(t)\in\dd u\gv S(t) = c(t)}.
    \label{eq:joint-creep}
  \end{align}
  
  2) For $t>0$, $s\in [0,c(t)]$, $z>c(t)-s$, and $u$, $v\in [0,\infty)^d$, 
  \begin{align}
    &
    \pr\Cbr{
      X(\fpt_c-)\in \dd u,\, \jp X(\fpt_c)\in \dd v \gv \fpt_c=t,
      S(\fpt_c-)=s, \jp S(\fpt_c)=z
    } \nonumber \\
    &
    \quad\qquad
    = \pr\Cbr{X(t)\in \dd u\gv S(t)=s} \Ldf_z(\dd v),
    \label{eq:joint2}
  \end{align}
  with $\Ldf_z(\dd v)=\pr\{V \in \dd v\gv \norm V = z\}$, where
  $V$ is a random vector following the distribution
  \begin{align*}
    \nu_a(\dd v) = 
    \dfrac{\cf{\norm v>a} \Ldf(\dd v)}{\dual\Ldf_S(a)},
  \end{align*}
  with $a\in (0,z)$ a fixed number.  The conditional probability
  measure $\Ldf_z(\dd v)$ is independent of the choice of $a\in
  (0,z)$.  Furthermore,
  \begin{align}
    \pr\Cbr{
      X(\fpt_c-)\in \dd u\gv \fpt_c=t,
      \jp S(\fpt_c)=0
    }
    = \pr\Cbr{X(t)\in \dd u\gv S(t)=c(t)}.
    \label{eq:joint2-creep}
  \end{align}
  
\end{theorem}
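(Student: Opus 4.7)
The plan is to prove Part~1---\eqref{eq:joint-jump} and \eqref{eq:joint-creep}---by two separate arguments and then read off Part~2 by conditioning.

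For \eqref{eq:joint-jump}, I would invoke the compensation formula (master formula) for the Poisson point process of jumps of $X$, whose intensity is $\dd s\otimes\Ldf(\dd v)$. The decisive observation is that, for any $(s,u,v)$ with $u\in\nns^d$, $v\ne 0$, and $\norm u\le c(s)<\norm u+\norm v$, the event $\fpt_c=s$ is forced: since $S=\norm X$ is non-decreasing and $c$ non-increasing, $\norm u\le c(s)$ gives $S(s')\le\norm u\le c(s)\le c(s')$ for all $s'<s$, so $\fpt_c\ge s$, while $S(s)=\norm u+\norm v>c(s)$ then pins $\fpt_c=s$. Applying the master formula to the predictable integrand that isolates this configuration, and using that at any fixed $s$ the law of $X(s-)$ coincides with that of $X(s)$, yields \eqref{eq:joint-jump} at once.

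For \eqref{eq:joint-creep}, the starting point is the identity $\{\fpt_c>t\}=\{S(t)\le c(t)\}$, a consequence of $S$ non-decreasing and $c$ non-increasing, which sharpens to
\begin{align*}
  \pr\{\fpt_c>t,\ X(t)\in\dd u\}=\cf{\norm u\le c(t)}\,\pr\{X(t)\in\dd u\}.
\end{align*}
Taking the negative $t$-variation of both sides and splitting the right side into (i) the evolution of the law of $X(t)$, which by \eqref{eq:joint-jump} is exactly the $v\ne 0$ (jump) contribution to the crossing density, and (ii) the contribution from the indicator losing mass along the Stieltjes measure $-\dd c(t)$, one finds that (ii) is supported on $\norm u=c(t)$ and equals $\pr\{X(t)\in\dd u\gv S(t)=c(t)\}$ times the product $f_t(c(t))(-\dd c(t))$, where $f_t$ denotes the density of $S(t)$ (available since $\Ldf_S\prs=\infty$). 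Recognizing $f_t(c(t))(-\dd c(t))$ as the marginal creep density $\pr\{\fpt_c\in\dd t,\,S(\fpt_c)=c(\fpt_c)\}$---derivable directly from $\pr(\fpt_c>t)=F_t(c(t))$ by differentiating the composition---delivers \eqref{eq:joint-creep}.

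Given Part~1, Part~2 follows by dividing the joint measures of Part~1 by their marginals for $S$ (the $d=1$, $X=S$, $\Ldf=\Ldf_S$ specialization of Part~1). For \eqref{eq:joint2}, the quotient factors as $\pr\{X(t)\in\dd u\}/\pr\{S(t)\in\dd s\}=\pr\{X(t)\in\dd u\gv S(t)=s\}$ and $\Ldf(\dd v)/\Ldf_S(\dd z)=\Ldf_z(\dd v)$, the latter being the disintegration of $\Ldf$ along the $\norm{\cdot}$-map. Independence of $\Ldf_z$ from $a\in(0,z)$ is immediate from $\nu_a(\dd v)=\cf{\norm v>a}\Ldf(\dd v)/\dual\Ldf_S(a)$: on $\{\norm v=z\}$, both $\cf{\norm v>a}$ and $\dual\Ldf_S(a)$ are constants in $v$, so the conditional kernel on that surface does not feel $a<z$. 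Equation \eqref{eq:joint2-creep} is entirely parallel, using \eqref{eq:joint-creep} in place of \eqref{eq:joint-jump}.

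The main obstacle is making the creep splitting rigorous: since $c$ is only assumed continuous, $-\dd c$ must be treated as a Stieltjes measure and the $t$-variation interpreted against test functions rather than pointwise. I would formalize this by testing both sides of \eqref{eq:joint-creep} against $\phi(t)\psi(u)$, attribute the jump contribution to \eqref{eq:joint-jump} via Fubini, and reduce the creep contribution to the Stieltjes integral $\int\phi(t)\,\mean[\psi(X(t))\gv S(t)=c(t)]\,\pr\{\fpt_c\in\dd t,\,S(\fpt_c)=c(\fpt_c)\}$ using a monotone approximation of $\cf{\norm u\le c(t)}$ by smooth decreasing surrogates, together with the scalar ($d=1$) creep formula as a bootstrap.
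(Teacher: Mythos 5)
Your treatment of \eqref{eq:joint-jump} is essentially the paper's (the compensation formula applied to the configuration that forces $\fpt_c=s$, plus $X(t-)\sim X(t)$), but it is incomplete on the boundary set $\{\norm v=c(t)-\norm u\}$: the right-hand side of \eqref{eq:joint-jump} gives that set zero mass because of the strict inequality, so you must also prove $\pr\{S(\fpt_c-)<S(\fpt_c)=c(\fpt_c)\}=0$, i.e.\ that the crossing jump cannot land exactly on the boundary. The paper does this with a second application of the compensation formula to $S$ together with the atomlessness of the law of $S(t)$ under \eqref{eq:inf-levy}; this null-set fact is also what later identifies $\{\jp X(\fpt_c)=0\}$ with $\{S(\fpt_c)=c(\fpt_c)\}$, which your creep argument uses implicitly.

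The genuine gap is in \eqref{eq:joint-creep}. First, you invoke a Lebesgue density $f_t$ of $S(t)$ ``available since $\Ldf_S\prs=\infty$''; infiniteness of the L\'evy measure only gives atomlessness, not absolute continuity, and Theorem \ref{thm:joint} does not assume the continuous density condition (that hypothesis enters only in Proposition \ref{prop:creep} and Theorem \ref{thm:conditional}). Second, and more seriously, the flux splitting is wrong cell-by-cell in $u$: $-\dd_t\,\pr\{\fpt_c>t,\,X(t)\in\dd u\}$ is \emph{not} $\pr\{\fpt_c\in\dd t,\,X(\fpt_c-)\in\dd u\}$, and the $t$-evolution of $\cf{\norm u\le c(t)}\pr\{X(t)\in\dd u\}$ is \emph{not} the jump contribution to the crossing density: both carry the flux of $X$ moving within $\{\norm u\le c(t)\}$ without any crossing (take $c\equiv a$ constant and a cell strictly below $a$; the law of $X(t)$ there still evolves while the crossing density is a different, nonnegative quantity). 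These interior fluxes cancel only after integrating over $u$, which recovers the marginal law of $\fpt_c$ (the content of Corollary \ref{cor:time-density} and Proposition \ref{prop:creep}) but not the joint law with $X(\fpt_c-)$, which is exactly what \eqref{eq:joint-creep} asserts; so the proposed argument does not deliver the conditional-law factor $\pr\{X(t)\in\dd u\gv S(t)=c(t)\}$ without presupposing it. The paper's route avoids all of this: on the creep event $X(\fpt_c-)=X(\fpt_c)$, and with $\fpt_*=\inf\{t\ge0: S(t)=c(t)\}$ one has, w.p.~1, $\{\fpt_*<\infty\}=\{\fpt_c=\fpt_*\}=\{S(\fpt_c)=c(\fpt_c)\}$ and $\{\fpt_*=t\}=\{S(t)=c(t)\}$ because $S-c$ is strictly increasing; evaluating $\mean[f(\fpt_c,X(\fpt_c-))\cf{S(\fpt_c)=c(\fpt_c)}]$ in two ways then yields \eqref{eq:joint-creep} with no densities and no differentiation in $t$. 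Finally, your Part 2 outline matches the paper, but the step you state in one line, $\Ldf(\dd v)\,\delta(\dd z-\norm v)=\Ldf_z(\dd v)\,\Ldf_S(\dd z)$, is where the actual work lies: $\Ldf$ is an infinite measure and cannot be normalized, so the disintegration has to be constructed through the truncations $\nu_\rx$ and the limit $\rx\dt0$, as in the paper.
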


\begin{proof}
  1) It is clear that $0<\fpt_c<\infty$ w.p.~1.  We
  first show \eqref{eq:joint-jump}.  Following the proof for
  Proposition~III.2 in \cite{bertoin:96}, let $f\ge 0$ be a Borel
  function on $\Omega$ such that $f(t,u,v)=0$ when $\norm v =
  c(t)-\norm u$.   Then
  \begin{align} 
    f(\fpt_c, X(\fpt_c-), \jp X(\fpt_c))
    =
    \sum_t f(t, X(t-), \jp X(t))
    \cf{0\le c(t) - S(t-)<\norm{\jp X(t)}}.
    \label{eq:f-sum}
  \end{align}

  For each $t>0$, define function $H_t(v) = f(t, X(t-), v) \cf{0\le
    c(t)-S(t-)<\norm v}$ on $\nns^d$.  Since $H=(H_t)$ is a
  predictable process with respect to the filtration generated by $\jp
  X$, by \eqref{eq:f-sum} and the compensation formula,
  \begin{align*}
    &
    \mean[f(\fpt_c, X(\fpt_c-), \jp X(\fpt_c))]\\
    &
    \quad
    = \int_0^\infty \dd t\,
    \mean\Sbr{
      \int f(t, X(t-), v) \cf{0\le c(t)-S(t-)<\norm v} \Ldf(\dd v)
    } \\
    &
    \quad
    \stackrel{(a)}{=}
    \int_0^\infty \dd t
    \int f(t, u, v) \cf{0\le c(t) - \norm u<\norm v}
    \pr\Cbr{X(t)\in \dd u} \Ldf(\dd v) \\
    &\quad
    = 
    \int_\Omega\cf{0\le c(t) - \norm u<\norm v} f(t,u,v)
    \,\dd t\,\pr\Cbr{X(t)\in \dd u}  \Ldf(\dd v),
  \end{align*}
  where $(a)$ is due to $X(t-)\sim X(t)$ for any $t>0$.  Since $f$
  is arbitrary, this shows \eqref{eq:joint-jump} for $(t,u,v)\in
  \Omega$ with $\norm v\not= c(t) - \norm u$.  It remains to consider
  $(t,u,v)\in\Omega$ with $\norm v = c(t) 
  - \norm u$.  In this case, the right hand side of
  \eqref{eq:joint-jump} is 0.  If we define $f(t,u,v) = \cf{v =
    c(t)-u>0}$ for $t>0$, $u\ge 0$ and $v>0$, then by similar argument
  as above based on the compensation formula, but directly applied to
  $S$,
  \begin{align*}
    \pr\{S(\fpt_c-)<S(\fpt_c)= c(\fpt_c)\}
    = \int_0^\infty \dd t\, \int \pr\{S(t)\in \dd u\}
    \Ldf_S(\{c(t)-u\}).
  \end{align*}
  For each $t$, there is only a countable set of $u$ with
  $\Ldf_S(\{c(t) - u\})>0$.  On the other hand, under assumption
  \eqref{eq:inf-levy}, the distribution of $S(t)$ is continuous, i.e.,
  $\pr\{S(t) = u\}=0$ for any $u$ \cite[Theorem~27.4]{sato:99}.  As a
  result, $\int\pr\{S(t)\in \dd u\} \Ldf_S(\{c(t)-u\})=0$ for each
  $t$, and so the multiple integral is 0.  Finally, the proof of
  \eqref{eq:joint-jump} is complete by
  \begin{align} \label{eq:jump}
    \pr\Cbr{\jp X(\fpt_c)\not=0,\ S(\fpt_c)=c(\fpt_c)}
    = \pr\Cbr{S(\fpt_c-)<S(\fpt_c)=c(\fpt_c)} = 0.
  \end{align}

  Now consider \eqref{eq:joint-creep}.  Under \eqref{eq:inf-levy}, $S$
  is strictly increasing w.p.~1.  Clearly, $\jp X(\fpt_c)=0$ implies
  $S(\fpt_c)=c(\fpt_c)$.  On the other hand, from \eqref{eq:jump}, on
  the event $S(\fpt_c)=c(\fpt_c)$, $\jp X(\fpt_c)=0$ w.p.~1.  Define
  $\fpt_* = \inf\{t\ge 0: S(t)=c(t)\}$.  Then w.p.~1,
  \begin{align} \label{eq:cross}
    \{\fpt_* < \infty\} = \{\fpt_c = \fpt_*\} = \{S(\fpt_c) =
    c(\fpt_c)\}.
  \end{align}
  
  Let $f\ge 0$ be a Borel function on $\prs \times \nns^d$ with
  bounded support.  Then $\mean[f(\fpt_c,\, X(\fpt_c-)) \cf{ S(\fpt_c)
    = c(\fpt_c)}]$  can be expressed in two ways.  First,
  from \eqref{eq:jump}, it equals
  \begin{align}
    \int f(t,u) \cf{\norm u=c(t)}
    \pr\Cbr{\fpt_c \in\dd t,\, X(\fpt_c-)\in
      \dd u,\, \jp X(\fpt_c)=0}.  \label{eq:integral1}
  \end{align}
  Second, from \eqref{eq:jump} and \eqref{eq:cross}, the
  expectation also equals
  \begin{multline*}
    \mean[f(\fpt_c,\, X(\fpt_c))\cf{S(\fpt_c) = c(\fpt_c)}] 
    = \mean[f(\fpt_*,\, X(\fpt_*))\cf{\fpt_*<\infty}] \\
    =\int \mean[f(t,\, X(t))\gv \fpt_*=t]
    \,\pr\{\fpt_* \in \dd t\} 
    = \int f(t,u)\, \pr\{X(t)\in\dd u\gv \fpt_*=t\}
    \pr\{\fpt_*\in \dd t\}.
  \end{multline*}
  From the definition of $\fpt_*$ and \eqref{eq:cross}, the last
  integral is equal to 
  \begin{align} \label{eq:integral2}
    \int f(t,u)\, \pr\Cbr{\fpt_c\in \dd t,\, S(\fpt_c)=c(\fpt_c)}
    \pr\Cbr{X(t)\in \dd u\gv S(t)=c(t)}.
  \end{align}
  Since $f$ is arbitrary, comparing the integrals in
  \eqref{eq:integral1} and \eqref{eq:integral2} then yields
  \begin{align*}
    &
    \cf{\norm u=c(t)} \pr\Cbr{\fpt_c \in\dd t,\, X(\fpt_c-)\in
      \dd u,\, \jp X(\fpt_c)=0}\\
    &
    \quad\qquad
    = \pr\Cbr{\fpt_c\in \dd t,\, S(\fpt_c)=c(\fpt_c)}
    \pr\Cbr{X(t)\in \dd u\gv S(t)=c(t)}.
  \end{align*}
  Since the qualifier $\cf{\norm u=c(t)}$ can be
  removed from the identity, \eqref{eq:joint-creep} follows.

  2)  The process $(X,S)$ is a $\levy$ process with $\Ldf_{(X,S)}(\dd
  v, \dd z) = \Ldf(\dd v)\, \delta(\dd z -\norm v)$.  With similar
  argument as 1), for $t>0$, $s\ge 0$, $z>0$, $u\in \nns^d$, and
  $v\in \nns^d\setminus\{0\}$,
  \begin{align*}
    &
    \pr\Cbr{
      \fpt_c\in \dd t,\, X(\fpt_c-)\in \dd u,\ S(\fpt_c-)\in \dd s,\,
      \jp X(\fpt_c)\in \dd v,\, \jp S(\fpt_c)\in \dd z
    }
    \\
    &\quad\qquad=
    \cf{0\le c(t)- s<z}
    \dd t\,
    \pr\Cbr{X(t)\in \dd u, S(t)\in\dd s}  \Ldf(\dd v) \,
    \delta(\dd z - \norm v).
  \end{align*}
  On the other hand, applying \eqref{eq:joint-jump} directly to $S$,
  we get
  \begin{align*}
    &
    \pr\Cbr{
      \fpt_c\in \dd t,\, S(\fpt_c-)\in \dd s,\, \jp S(\fpt_c)\in
      \dd z
    }
    =
    \cf{0\le c(t) - s<z}
    \dd t\,\pr\Cbr{S(t)\in \dd s} \Ldf_S(\dd z).
  \end{align*}
  Therefore, in order to get \eqref{eq:joint2}, it suffices to show
  $\Ldf(\dd v) \delta(\dd z - \norm v) = \Ldf_z(\dd v) \Ldf_S(\dd z)$,
  which is equivalent to saying that for any measurable $A\subset
  (\nns^d\setminus\{0\}) \times \prs$,
  \begin{align*}
    \int \cf{(v,z)\in A} \Ldf(\dd v)\, \delta(\dd z - \norm v) = 
    \int \cf{(v,z)\in A} \Ldf_z(\dd v)\, \Ldf_S(\dd z),
  \end{align*}

  The left hand side is $\int \cf{(v, \norm v)\in A}\Ldf(\dd v)$.  To
  evaluate the right hand side, for any $\rx>0$, let $A_\rx =
  \{(v,z)\in A: z>\rx\}$.  Observe that $\cf{z\ge \rx}\Ldf_S(\dd
  z)/\dual\Ldf_S(\rx)$ is the distribution of $\norm V$ for
  $V\sim \nu_\rx$.  Then, from the definition of $\Ldf_z(\dd v)$,
  \begin{align*}
    &
    \int \cf{(v,z)\in A_\rx} \Ldf_z(\dd v) \Ldf_S(\dd z)\\
    &\qquad
    = \dual\Ldf_S(\rx) \int\cf{(v,z)\in A_\rx} \pr\{V\in \dd v\gv
    \norm V = z\} \pr\{\norm V\in \dd z\} \\
    &\qquad
    = \dual\Ldf_S(\rx) \int\cf{(v,\norm v)\in A_\rx} \pr\{V\in \dd
    v\}
    = 
    \int \cf{(v,\norm v)\in A_\rx} \cf{\norm v\ge \rx}\Ldf(\dd v).
  \end{align*}
  Let $\rx\dt 0$.  Since $A_\rx \uparrow A$, the last integral
  converges to $\int \cf{(v,\norm v)\in A}\Ldf(\dd v)$.  This
  completes the proof of \eqref{eq:joint2}.  Finally, since
  $\jp X(\fpt_c)=0$ if and only if $\jp S(\fpt_c)=0$, and by
  \eqref{eq:joint-jump}, $\pr\{\jp S(\fpt_c)\not=0,
  S(\fpt_c)=c(\fpt_c)\} = 0$, \eqref{eq:joint2-creep} follows from
  \eqref{eq:joint-creep}. 
\end{proof}

\begin{cor} \label{cor:time-density}
  For $a>0$ and $t>0$, define
  \begin{align} \label{eq:time-density}
    \psi_a(t) = \int_0^a \dual\Ldf_S(a-u) \pr\Cbr{S(t)\in\dd u}.
  \end{align}
  Then, under the same assumption as Theorem \ref{thm:joint},
  \begin{align}  \label{eq:time-density0}
    \pr\{\fpt_c\in\dd t,\, S(\fpt_c)>c(\fpt_c)\}
    = \psi_{c(t)}(t)\,\dd t = \dd t
    \int_0^{c(t)} \dual\Ldf_S(c(t)-u)\,\pr\Cbr{S(t)\in\dd u}.
  \end{align}
  In particular, for constant $a\in \prs$, $\fpt_a$ has p.d.f.\
  $\psi_a(t)$.
\end{cor}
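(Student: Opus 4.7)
The plan is to deduce \eqref{eq:time-density0} by a direct application of \eqref{eq:joint-jump} of Theorem \ref{thm:joint} with the subordinator $S$ itself taking the role of $X$. Concretely, I would set $d=1$, $X=S$, $\norm{X}=S$ and $\Ldf=\Ldf_S$ in the theorem; this is legitimate because $S$ is a subordinator whose $\levy$ measure $\Ldf_S$ satisfies \eqref{eq:inf-levy}. Then \eqref{eq:joint-jump} gives an explicit joint law for $(\fpt_c,S(\fpt_c-),\jp S(\fpt_c))$ with density carried by $\cf{0\le c(t)-u<v}\,\dd t\,\pr\{S(t)\in\dd u\}\,\Ldf_S(\dd v)$.

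Next I would rewrite the event of interest as the precise region cut out by this indicator. Since $c\in C\prs$ is non-increasing, $c(\fpt_c-)=c(\fpt_c)$, and the definition of $\fpt_c$ forces $S(\fpt_c-)\le c(\fpt_c)$ almost surely; hence $\{S(\fpt_c)>c(\fpt_c)\}$ is exactly $\{\jp S(\fpt_c)>c(\fpt_c)-S(\fpt_c-)\}$. Integrating the joint law over $u\in[0,c(t)]$ and $v\in(c(t)-u,\infty)$ and recognising the inner integral as $\Ldf_S((c(t)-u,\infty))=\dual\Ldf_S(c(t)-u)$ produces $\psi_{c(t)}(t)\,\dd t$, which is \eqref{eq:time-density0}.

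For the final ``In particular'' assertion, I would specialise to $c\equiv a$, which immediately gives $\pr\{\fpt_a\in\dd t,\,S(\fpt_a)>a\}=\psi_a(t)\,\dd t$. The only other possible contribution to $\pr\{\fpt_a\in\dd t\}$ is the creeping event $\{S(\fpt_a)=a\}$; but under the paper's standing framework (see the reduction preceding the corollary in Section~\ref{ssec:overview}) any positive drift is absorbed into the boundary so $S$ is a pure jump subordinator and cannot reach the constant level $a$ continuously, giving $\pr\{S(\fpt_a)=a\}=0$. Consequently $\fpt_a$ has p.d.f.\ $\psi_a(t)$. The only mildly subtle step is keeping track of the strict vs.\ weak inequality in the jump indicator so that the inner $v$-integral matches the convention $\dual\Ldf_S(x)=\Ldf_S(x,\infty)$; beyond this the corollary is a straightforward marginalisation of Theorem \ref{thm:joint}.
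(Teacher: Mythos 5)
Your proof is correct and takes essentially the same approach as the paper: apply \eqref{eq:joint-jump} with $X=S$, $d=1$, $\Ldf=\Ldf_S$, then marginalise over the jump size $v$ so the inner integral produces $\dual\Ldf_S(c(t)-u)$. One small remark: for the ``in particular'' clause, the claim that the drift-free subordinator $S$ cannot creep across a constant level is a genuine (if standard) result rather than a self-evident consequence of being pure jump, and the paper supports it with a citation to Bertoin's Theorem~III.4; your write-up would be stronger if you invoked that result explicitly instead of treating it as obvious.
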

\begin{proof}
  Apply \eqref{eq:joint-jump} in Theorem \ref{thm:joint} directly to
  $S$ to get 
  \begin{align*}
    \pr\{\fpt_c\in\dd t,\, S(\fpt_c)> c(\fpt_c)\}
    =
    \dd t\int\cf{0\le c(t)-u<v} \pr\{S(t)\in\dd u\}\Ldf_S(\dd v),
  \end{align*}
  which is \eqref{eq:time-density0}.  Since $\pr\Cbr{S(\fpt_a) >a} =
  1$ \cite[Theorem III.4]{bertoin:96}, $\pr\{\fpt_a\in\dd t\} =
  \psi_a(t)\,\dd t$.
\end{proof}
\begin{definition} \rm
  $S$ is said to satisfy the \emph{continuous density condition\/}, if
  $S(t)$ has a p.d.f.\ $\pdf_t$ on $\prs$ for each $t>0$ and the
  mapping $(t,x)\to \pdf_t(x)$ is continuous on $\prs\times \prs$.
\end{definition}

We next obtain the p.d.f.\ of $\fpt_c$ at the event that $S$ creeps
across a differentiable segment of $c$.  For linear $c$, the result is
shown in \cite{griffin:11}.  The following lemma is proved in
Appendix.
\begin{lemma} \label{lemma:cdc1}
  Under the continuous density condition on $S$, the mapping $(a,t)\to
  \psi_a(t)$ is continuous on $\prs\times \prs$, where $\psi_a(t)$ is
  the p.d.f.\ of $\fpt_a$ in \eqref{eq:time-density}.
\end{lemma}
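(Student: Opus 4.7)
The plan is to rewrite $\psi_a(t)$ via Fubini--Tonelli so that the singularities of the integrand $\dual\Ldf_S(a-u)\pdf_t(u)$ (at $u=a$ from $\dual\Ldf_S$, and potentially at $u=0$ from $\pdf_t$) are traded for a bounded quantity integrated against $\Ldf_S$. Since $\dual\Ldf_S(a-u)=\int\cf{w>a-u}\Ldf_S(\dd w)$ and the integrand is nonnegative,
\begin{align*}
  \psi_a(t) = \int_0^\infty \bigl[H_t(a) - H_t((a-w)^+)\bigr]\,\Ldf_S(\dd w),
\end{align*}
where $H_t(u):=\pr\{S(t)\le u\}$ and $x^+:=\max(x,0)$.

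Next I would establish joint continuity of $H_t(u)$ on $\prs\times\nns$. For fixed $u$, continuity in $t$ follows from the weak continuity of the $\levy$ semigroup $t\mapsto S(t)$ together with the absence of atoms in $S(t_0)$ (supplied by the continuous density condition). At $(t_0,u_0)$ with $u_0>0$, the triangle inequality reduces joint continuity to a local Lipschitz estimate $|H_t(u)-H_t(u_0)|\le M|u-u_0|$, with $M$ obtained by bounding $\pdf$ over a compact subset of $\prs\times\prs$. At $(t_0,0)$, the monotone bound $H_{t_n}(u_n)\le H_{t_n}(\rx)\to H_{t_0}(\rx)\dt 0$ as $\rx\dt 0$ suffices, since \eqref{eq:inf-levy} makes $S(t_0)>0$ a.s., so $H_{t_0}(0+)=0$.

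Finally I apply dominated convergence. For each $w>0$, $P(a,t,w):=H_t(a)-H_t((a-w)^+)$ is continuous in $(a,t)\in\prs\times\prs$ by the previous step. On a compact neighborhood $K$ of any $(a_0,t_0)$, both $P(a,t,w)\le 1$ trivially and, for $w$ small, $P(a,t,w)\le M_K w$, where $M_K$ is a uniform bound on $\pdf_t(u)$ over a suitable compact subset of $\prs\times\prs$ (namely one containing $[a-w,a]$ in its $u$-coordinate for $(a,t)\in K$). Thus $P\le D(w):=\min(M_K w,1)$ on $K$, and the basic subordinator integrability $\int_0^\infty(w\wedge 1)\,\Ldf_S(\dd w)<\infty$ makes $D$ integrable against $\Ldf_S$. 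DCT then yields continuity of $\psi_a(t)$ at $(a_0,t_0)$. The main obstacle is that $\pdf_t(u)$ need not remain bounded as $u\dt 0$ under the continuous density condition alone, which is why a direct dominated-convergence argument on the original $u$-integral fails; the Fubini manipulation is what dissolves both endpoint singularities into the harmless $P(a,t,w)$, after which only the standard subordinator condition is needed.
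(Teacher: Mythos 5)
Your proposal is correct in substance but takes a genuinely different route from the paper's. The paper proves an auxiliary continuity result (Lemma A.2 in the Appendix, applied with $h\equiv 1$): it works directly with $\int_0^a \dual\Ldf_S(a-u)\,\pdf_t(u)\,\dd u$, controlling increments in $a$ through two uniform estimates (Lemma A.1, which splits the integral near $u=0$ and near $u=a$, using $\int_0^\beta \dual\Ldf_S(u)\,\dd u<\infty$ and $\pr\{S(t_0)\le \rx\}\to 0$), and controlling increments in $t$ by Scheff\'e's theorem (pointwise convergence of $\pdf_t$ gives total-variation convergence). Your Fubini--Tonelli rewriting $\psi_a(t)=\int_0^\infty\bigl[H_t(a)-H_t((a-w)^+)\bigr]\Ldf_S(\dd w)$ trades those estimates for joint continuity of the distribution function $(t,u)\mapsto H_t(u)$ plus dominated convergence; the ingredients are the same (local boundedness of $\pdf$ away from $u=0$, $\pr\{S(t)\le\rx\}\dt 0$ as $\rx \dt 0$, and $\int_0^\infty (w\wedge 1)\,\Ldf_S(\dd w)<\infty$), but your reduction is more elementary and avoids the total-variation step. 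What it does not give is the paper's extra generality: Lemma A.2 allows an arbitrary bounded weight $h(u,x)$ inside the integral, which your CDF-increment representation does not accommodate (though the paper only needs $h\equiv 1$ for this lemma).

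One step needs a small repair. From ``$P\le 1$ always'' and ``$P\le M_K w$ for $w$ small'' you cannot conclude $P\le \min(M_K w,1)$ for all $w$: if $w$ exceeds the threshold $w_0$ below which $[a-w,a]$ stays inside the compact set on which $M_K$ bounds the density, yet $M_K w<1$, then $[(a-w)^+,a]$ reaches down toward $u=0$ where $\pdf_t$ may blow up, and $P(a,t,w)$ (e.g.\ $=H_t(a)$ once $w\ge a$) can exceed $M_K w$. The fix is immediate and uses only what you already proved: take $w_0$ fixed by $K$ (say half the smallest $a$-coordinate occurring in $K$) and dominate by $M_K w$ on $\{w\le w_0\}$ and by $1$ on $\{w>w_0\}$; this function is still $\Ldf_S$-integrable since $\int_0^\infty (w\wedge 1)\,\Ldf_S(\dd w)<\infty$, and the dominated convergence argument goes through unchanged.
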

\begin{prop} \label{prop:creep}
  Let $c\in C\prs$ be non-increasing with $c(0+)>0$.  If $c$ is
  differentiable on an open non-empty $G \subset\prs$ and $S$
  satisfies the continuous density condition, then
  \begin{align} \label{eq:creeping}
    \pr\Cbr{\fpt_c\in \dd t,\, S(\fpt_c) = c(\fpt_c)} = -c'(t)
    \pdf_t(c(t))\,\dd t, \quad t\in G.
  \end{align}
\end{prop}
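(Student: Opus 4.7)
The plan is to compute the total density $F'(t)$ of $\fpt_c$ on $G$, where $F(t):=\pr\{\fpt_c\le t\}$, and then subtract off the jump-over density $\psi_{c(t)}(t)$ identified in Corollary~\ref{cor:time-density}. First I would reduce $F(t)$ to a marginal of $S(t)$ alone: since $S$ is non-decreasing and $c$ non-increasing, the c\`adl\`ag map $s\mapsto S(s)-c(s)$ is non-decreasing, so $\sup_{s\in(0,t]}(S(s)-c(s))=S(t)-c(t)$, whence $\{\fpt_c\le t\}=\{S(t)\ge c(t)\}$ a.s. The continuous density condition gives $\pr\{S(t)=c(t)\}=0$, so $F(t)=\pr\{S(t)>c(t)\}$.

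For $t\in G$ and $h>0$ small enough that $[t,t+h]\subset G$, since $\{S(t)>c(t)\}\subset\{S(t+h)>c(t+h)\}$, splitting by whether $S(t)$ has already passed $c(t+h)$ gives
\begin{align*}
  F(t+h)-F(t) = \pr\{S(t)\le c(t+h),\,S(t+h)>c(t+h)\} + \int_{c(t+h)}^{c(t)}\pdf_t(u)\,\dd u,
\end{align*}
since whenever $c(t+h)<S(t)\le c(t)$, $S(t+h)\ge S(t)>c(t+h)$ automatically. To handle the first piece I would apply the compensation formula to the predictable integrand $\cf{s\in(t,t+h]}\cf{S(s-)\le c(t+h)}\cf{x>c(t+h)-S(s-)}$; because $S$ has no drift (the Laplace exponent in Section~\ref{sec:distribution} has no linear term) and hence a.s.\ does not creep across the fixed level $c(t+h)$, the jump-over event coincides with the existence of a crossing jump in $(t,t+h]$ up to a null set, and
\begin{align*}
  \pr\{S(t)\le c(t+h),\,S(t+h)>c(t+h)\} = \int_t^{t+h}\psi_{c(t+h)}(s)\,\dd s.
\end{align*}

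Dividing by $h$ and letting $h\dt 0$ (with the symmetric argument for $h<0$): continuity of $\pdf_t$ and differentiability of $c$ give $h^{-1}\int_{c(t+h)}^{c(t)}\pdf_t(u)\,\dd u\to -c'(t)\pdf_t(c(t))$, while joint continuity of $\psi_a(s)$ from Lemma~\ref{lemma:cdc1} gives $h^{-1}\int_t^{t+h}\psi_{c(t+h)}(s)\,\dd s\to\psi_{c(t)}(t)$. Hence $F$ is differentiable on $G$ with $F'(t)=\psi_{c(t)}(t)-c'(t)\pdf_t(c(t))$, and subtracting the jump-over density $\psi_{c(t)}(t)$ from Corollary~\ref{cor:time-density} leaves $-c'(t)\pdf_t(c(t))$ as the creep density, which is \eqref{eq:creeping}. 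The hardest step is the compensation computation, which hinges on equating the jump-over event with the existence of a single crossing jump, which in turn relies on the a.s.\ absence of creeping across a constant level under the no-drift assumption built into the setup.
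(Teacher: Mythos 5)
Your proposal is correct and follows essentially the same route as the paper: split the increment of $F(t)=\pr\{\fpt_c\le t\}$ into a constant-level first-passage piece (handled via $\psi_a$, whose joint continuity comes from Lemma~\ref{lemma:cdc1}) and a piece accounting for $S(t)$ already lying in $(c(t+h),c(t)]$, then pass to the limit using the continuous density condition and differentiability of $c$. The only cosmetic differences are that you take a forward rather than backward difference and re-derive the identity $\pr\{t<\fpt_{c(t+h)}\le t+h\}=\int_t^{t+h}\psi_{c(t+h)}$ via the compensation formula instead of simply citing Corollary~\ref{cor:time-density}, which already states that $\psi_a$ is the p.d.f.\ of $\fpt_a$.
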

\begin{proof}
  It suffices to consider $t\in G$ with $c(t) > 0$.  Given such $t$,
  $a:=c(t)$ is fixed.  Letting $q(\rx)= \pr\Cbr{t-\rx < \fpt_c\le
    t}$, for $\rx>0$, we have $q(\rx) = \pr\Cbr{S(t-\rx)<c(t-\rx),\,
    S(t)\ge a} = q_1(\rx)+q_2(\rx)$,
  where $q_1(\rx)  =\pr\Cbr{S(t-\rx)< a \le S(t)}$, $q_2(\rx) =
  \pr\Cbr{a \le S(t-\rx) < c(t-\rx)}$.  Let $\fpt_a = \inf\Cbr{s>0:
    S(s)>a}$.  Then $q_1(\rx) = \pr\Cbr{t-\rx < \fpt_a \le t}$.  By
  Corollary \ref{cor:time-density} and Lemma \ref{lemma:cdc1},
  $\psi_a$ is the continuous p.d.f.\ of $\fpt_a$, so the function
  $\pr\{\fpt_a\le \cdot\}$ is differentiable with derivative
  $\psi_a(t)$ at $t$.  Then $q_1(\rx)/\rx \to \psi_a(t) =
  \psi_{c(t)}(t)$ as $\rx\dt 0$.  On the other hand, $q_2(\rx) =
  \int_0^{c(t-\rx)-c(t)} \pdf_{t-\rx}(a+x)\,\dd x$.  Since $(t,x)\to
  \pdf_t(x)$ is continuous on $\prs \times \prs$ and $c$ is
  differentiable at $t$, $q_2(\rx)/\rx \to -c'(t) \pdf_t(c(t))$ as
  $\rx\dt 0$.  We thus get
  \begin{align*}
    \lim_{\rx\dt 0} \frac{\pr\Cbr{\fpt_c\le t} -
      \pr\Cbr{\fpt_c\le t-\rx}}{\rx} 
    = -c'(t) \pdf_t(c(t)) + \psi_{c(t)}(t).
  \end{align*}

  Similarly, as $\rx\dt 0$, $\rx^{-1} [\pr\Cbr{\fpt_c\le t+\rx} -
  \pr\Cbr{\fpt_c\le t}]$ has the same limit.  It follows that
  $\pr\Cbr{\fpt_c\le \cdot}$ is differentiable everywhere in the
  open set $\Cbr{t\in G: c(t)>0}$, and hence its derivative $-c'(t)
  \pdf_t(c(t)) + \psi_{c(t)}(t)$ is the p.d.f.\ of $\fpt_c$ on the set
  \cite[Theorem 7.21]{rudin:87}.  Then by Corollary
  \ref{cor:time-density},
  \begin{align*}
    \frac{\pr\Cbr{\fpt_c\in\dd t}}{\dd t} = -c'(t) \pdf_t(c(t))
    +
    \frac{\pr\Cbr{\fpt_c\in\dd t,\, S(\fpt_c)>c(\fpt_c)}}{\dd t},
  \end{align*}
  which yields \eqref{eq:creeping}.
\end{proof}

We need one more lemma before getting the second main result of this
section.
\begin{lemma} \label{lemma:cdc2}
  Let $S$ satisfy the continuous density condition.
  If $c$ is continuous and non-increasing on $\prs$ with
  $c(0+)>0$, then $\pr\Cbr{\fpt_c\in A}=0$ for any $A\subset \prs$
  with $\ell(A)=c(A)=0$, where $c(A)$ is the
  Riemann-Stieltjes integral $|\int \cf{x\in A\cap \prs} \,\dd
  c(x)|$. 
\end{lemma}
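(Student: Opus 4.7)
The plan is to bound the law of $\fpt_c$ locally by $\ell + \mu_{-c}$, where $\mu_{-c}$ denotes the Lebesgue--Stieltjes measure on $\prs$ induced by $-c$, so that $c(A)=\mu_{-c}(A)$.  Under \eqref{eq:inf-levy} the subordinator $S$ is strictly increasing, and combined with $c$ non-increasing this makes $S-c$ non-decreasing; hence $\{\fpt_c>t\}=\{S(t)\le c(t)\}$, and the survival function is $F(t):=\pr\{\fpt_c>t\}=G_t(c(t))$ with $G_t(y):=\int_0^y \pdf_t(x)\,\dd x$.  By the continuous density condition and continuity of $c$, $F$ is continuous on $\prs$, so the law of $\fpt_c$ is atomless; and $\mu_{-c}$ is atomless because $c$ is continuous.

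The key increment identity, obtained by inserting $G_b(c(a))$, is, for $0<a<b$,
\begin{align*}
  F(a)-F(b) &= [G_a(c(a))-G_b(c(a))] + [G_b(c(a))-G_b(c(b))] \\
  &= \int_a^b \psi_{c(a)}(t)\,\dd t + \int_{c(b)}^{c(a)} \pdf_b(x)\,\dd x,
\end{align*}
where the first bracket equals $\pr\{\fpt_{c(a)}\in(a,b]\}$, which in turn equals $\int_a^b \psi_{c(a)}(t)\,\dd t$ by Corollary \ref{cor:time-density} applied at the constant level $c(a)$; both summands are non-negative.  Let $\tilde t=\inf\{t>0:c(t)\le 0\}$.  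On any compact $[\alpha,\beta]\subset(0,\tilde t)$, $\pdf_t(x)$ is continuous on $[\alpha,\beta]\times[c(\beta),c(\alpha)]\subset\prs\times\prs$ by the continuous density condition, and $\psi_a(t)$ is continuous on $[c(\beta),c(\alpha)]\times[\alpha,\beta]$ by Lemma \ref{lemma:cdc1}, so uniform bounds $\psi_{c(a)}(t)\le M_1$ and $\pdf_b(x)\le M_2$ hold for $a,b,t\in[\alpha,\beta]$ and $x\in[c(\beta),c(\alpha)]$.  Consequently $F(a)-F(b)\le M_1\,\ell((a,b])+M_2\,\mu_{-c}((a,b])$ for every $(a,b]\subset[\alpha,\beta]$.

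For general $A\subset\prs$ with $\ell(A)=c(A)=0$, first reduce to $A\subset[\alpha,\beta]\subset(0,\tilde t)$: the part of $A$ in $(\tilde t,\infty)$ is null for the law of $\fpt_c$ because, when $\tilde t<\infty$, $c(\tilde t)=0<S(\tilde t)$ forces $\fpt_c\le\tilde t$ a.s.; the singleton $\{\tilde t\}$ is null by continuity of $F$; and by countable additivity it suffices to treat each $A\cap[\alpha_n,\beta_n]$ with $[\alpha_n,\beta_n]\uparrow(0,\tilde t)$.  For such an $A$, outer regularity of $\ell$ and $\mu_{-c}$ provides, for each $\rx>0$, an open $U\subset[\alpha,\beta]$ with $A\subset U$, $\ell(U)<\rx$, and $\mu_{-c}(U)<\rx$; writing $U$ as the disjoint union of its countably many component open intervals and summing the increment bound (using continuity of $F$ and atomlessness of $\mu_{-c}$) yields $\pr\{\fpt_c\in A\}\le\pr\{\fpt_c\in U\}\le(M_1+M_2)\rx$.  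Sending $\rx\dt 0$ finishes the proof.  The only non-routine ingredient is the uniform boundedness of $\psi_{c(a)}(t)$ on compact boxes, which is exactly the content of Lemma \ref{lemma:cdc1}; the rest is a standard outer-regularity and covering argument.
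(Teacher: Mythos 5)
Your proof is correct and takes essentially the same route as the paper's: both reduce to a compact subinterval of $\{c>0\}$, cover $A$ by a countable family of small intervals, and bound the probability of $\fpt_c$ landing in each interval by a level-increment term (controlled by a uniform bound on $\pdf_t$ from the continuous density condition) plus a constant-level first-passage term (controlled via Corollary \ref{cor:time-density} and Lemma \ref{lemma:cdc1}). The only cosmetic difference is the choice of middle term in the telescoping of $F(a)-F(b)$: you insert $G_b(c(a))$, whereas the paper effectively inserts $G_a(c(b))$, i.e.\ it splits $\pr\{S(a_i)\le c(a_i),\,S(b_i)>c(b_i)\}$ at level $c(b_i)$ and time $a_i$ rather than at level $c(a_i)$ and time $b_i$.
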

\begin{theorem} \label{thm:conditional}
  Let $c$ be an absolutely continuous non-increasing function on $\prs$
  with $c(0+)>0$.  Suppose $c$ is differentiable on $\prs\setminus F$
  for some closed set $F$ with $\ell(F)=0$.  Then under the continuous
  density condition on $S$, w.p.~1, $\fpt_c\in \prs\setminus F$ and
  for $u\in \nns^d$ and $v\in \nns^d\setminus\{0\}$,
  \begin{align} \label{eq:c-passage-jump}
    \pr\Cbr{X(\fpt_c-)\in\dd u,\, \jp X(\fpt_c)\in\dd v\gv \fpt_c}
    &= Z(\fpt_c)^{-1} \nu_1(\dd u,\dd v\gv \fpt_c), \\
    \pr\Cbr{X(\fpt_c-)\in\dd u,\, \jp X(\fpt_c)=0\gv \fpt_c}
    &= Z(\fpt_c)^{-1} \nu_2(\dd u\gv \fpt_c),
    \label{eq:c-passage-creep}
  \end{align}
  where for $t\in \prs\setminus F$,
  \begin{align*}
    \nu_1(\dd u, \dd v\gv t)
    &= \cf{0\le c(t)-\norm u<\norm v} \pr\{X(t) \in \,\dd u\}\,
    \Ldf(\dd v), \\
    \nu_2(\dd u\gv t)
    & 
    = - c'(t)\pdf_t(c(t))\,\pr\{X(t)\in \dd u\gv S(t) = c(t)\}, \\
    Z(t)
    &= -c'(t)\pdf_t(c(t)) + \int_0^{c(t)} 
    \dual\Ldf_S(c(t) - s)\pr\{S(t)\in \,\dd s\}.
  \end{align*}
\end{theorem}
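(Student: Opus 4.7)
The plan is to identify an explicit density for the law of $\fpt_c$ on $\prs\setminus F$ by combining Corollary \ref{cor:time-density} with Proposition \ref{prop:creep}, and then read off the conditional laws by dividing the joint distributions from Theorem \ref{thm:joint} by this marginal density.

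First I would verify that $\fpt_c \in \prs\setminus F$ w.p.~1. Since $c$ is non-increasing and absolutely continuous, the finite measure $-\dd c$ is absolutely continuous with respect to Lebesgue, so $\ell(F)=0$ forces $c(F)=0$ in the Riemann--Stieltjes sense used by Lemma \ref{lemma:cdc2}. Invoking that lemma gives $\pr\{\fpt_c\in F\}=0$, so it suffices to work on the open set $G=\prs\setminus F$, where $c$ is differentiable. On $G$, Proposition \ref{prop:creep} yields the creeping contribution
\[
 \pr\{\fpt_c\in\dd t,\,S(\fpt_c)=c(\fpt_c)\}=-c'(t)\pdf_t(c(t))\,\dd t,
\]
while Corollary \ref{cor:time-density} yields the jumping contribution
\[
 \pr\{\fpt_c\in\dd t,\,S(\fpt_c)>c(\fpt_c)\}=\Bigl(\int_0^{c(t)}\dual\Ldf_S(c(t)-s)\,\pr\{S(t)\in\dd s\}\Bigr)\dd t.
\]
Summing these two pieces gives exactly $\pr\{\fpt_c\in\dd t\}=Z(t)\,\dd t$ on $\prs\setminus F$.

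To finish, I would divide the joint laws by this marginal. For the non-creeping part, \eqref{eq:joint-jump} presents the joint law of $(\fpt_c,X(\fpt_c-),\jp X(\fpt_c))$ on $\{\jp X(\fpt_c)\neq 0\}$ as $\nu_1(\dd u,\dd v\gv t)\,\dd t$ with $\nu_1$ as in the statement, so division by $Z(t)\,\dd t$ delivers \eqref{eq:c-passage-jump}. For the creeping part, \eqref{eq:joint-creep} together with the explicit form of $\pr\{\fpt_c\in\dd t,\,S(\fpt_c)=c(\fpt_c)\}$ from Proposition \ref{prop:creep} presents the joint law as $\nu_2(\dd u\gv t)\,\dd t$, and dividing by $Z(t)\,\dd t$ yields \eqref{eq:c-passage-creep}.

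The main obstacle, and the only step requiring more than mechanical manipulation, is the first one: showing that $F$ carries no mass under the distribution of $\fpt_c$. Without this, $c'(t)$ and hence $Z(t)$ need not be defined $\pr_{\fpt_c}$-a.e., and the right-hand sides of \eqref{eq:c-passage-jump}--\eqref{eq:c-passage-creep} would be ambiguous. Everything else is a routine application of the Radon--Nikodym-style identification of regular conditional distributions from already-computed joint and marginal densities.
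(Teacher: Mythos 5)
Your proposal is correct and follows essentially the same route as the paper: absolute continuity of $c$ gives $c(F)=0$, Lemma \ref{lemma:cdc2} places $\fpt_c$ in $\prs\setminus F$ a.s., the joint laws come from \eqref{eq:joint-jump}, \eqref{eq:joint-creep}, and Proposition \ref{prop:creep}, and the conditional laws are obtained by dividing by the marginal $Z(t)\,\dd t$. The paper recovers the marginal by integrating the two joint expressions over $u$ and $v$, which is equivalent to your direct appeal to Corollary \ref{cor:time-density} plus Proposition \ref{prop:creep}.
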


\begin{proof}
  Since $c$ is absolutely continuous, $c(F)=0$, so by Lemma
  \ref{lemma:cdc2}, $\fpt_c\in \prs\setminus F$ w.p.~1.  By Theorem
  \ref{thm:joint} and Proposition \ref{prop:creep}, for $t\in
  \prs\setminus F$, $u\in \nns^d$, $v\in \nns^d\setminus\{0\}$,
  \begin{align*}
    &
    \pr\Cbr{
      \fpt_c\in\dd t,\, X(\fpt_c-)\in \dd u,\, \jp X(\fpt_c)\in
      \dd v
    } \\
    &\quad\qquad
    = 
    \cf{0\le c(t)-\norm u <\norm v} \dd t\,
    \pr\Cbr{X(t)\in\dd u} \Ldf(\dd v) = \dd t\,
    \nu_1(\dd u,\dd v\gv t),
    \\ 
    &
    \pr\Cbr{
      \fpt_c\in\dd t,\, X(\fpt_c-)\in \dd u,\, \jp X(\fpt_c)=0
    } \\
    &
    \quad\qquad
    = - c'(t) \pdf_t(c(t))\,\dd t\,
    \pr\{X(t)\in \dd u\gv S(t) = c(t)\}
    = \dd t\,\nu_2(\dd u\gv t).
  \end{align*}
  Integrate over $u$ and $v$ to get $\pr\Cbr{\fpt_c\in\dd t} =
  Z(t)\,\dd t$.  Then \eqref{eq:c-passage-jump} follows.
\end{proof}

\subsection{Results for general L\'evy processes with bounded variation}
For a process $X$ taking values in $\Reals$, denote
\begin{align} \label{eq:extreme}
  \overline X(t) = \sup\{X(s): s\le t\},
  \quad
  \underline X(t) = \inf\{X(s): s\le t\}.
\end{align}
The following results will be used to validate the extension described
in Section \ref{ssec:overview2}.
\begin{prop} \label{prop:bv}
  Let $X$ be a $\levy$ process taking values in $\Reals$ with bounded
  variation and non-positive drift.  Suppose $X$ is not compound
  Poisson.  Then for any $a>0$,
  \begin{align*}
    \pr\Cbr{\exists t>0 \text{ such that }
      \overline X(s)<a \text{ all } s<t, X(t-) = a \text{ or } X(t)=a}
    = 0.
  \end{align*}
\end{prop}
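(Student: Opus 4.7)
The plan is to decompose the event in question as $E = E_1 \cup E_2$, where
\begin{align*}
  E_1 &= \{\exists\, t>0:\ \overline X(s)<a\ \forall\, s<t,\ X(t-)=a\}, \\
  E_2 &= \{\exists\, t>0:\ \overline X(s)<a\ \forall\, s<t,\ X(t-)<X(t)=a\},
\end{align*}
and bound each by a separate argument.  These two sub-events cover $E$: under $\overline X(s)<a$ for $s<t$ we must have $X(t-)\le a$, so either $X(t-)=a$ (in $E_1$) or $X(t-)<a$, in which case the condition $X(t)=a$ places us in $E_2$.

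For $E_2$, I would apply the compensation (master) formula to the Poisson point process of jumps of $X$.  On $E_2$ there is a jump of $X$ at time $t$ of size $a-X(t-)>0$ landing exactly on $a$, so
\[
  \mean\Big[\#\{t>0:\ \overline X(t-)<a,\ X(t-)+\jp X(t)=a,\ \jp X(t)\ne 0\}\Big]
  = \int_0^\infty \mean\big[\cf{\overline X(t-)<a}\,\Ldf(\{a-X(t-)\})\big]\,\dd t.
\]
Either $\Ldf$ is infinite, in which case $X(t-)\stackrel d= X(t)$ has a continuous law by \cite[Theorem~27.4]{sato:99}, and since $\Ldf$ has only countably many atoms the integrand vanishes for every $t$; or $\Ldf$ is finite, in which case the hypothesis that $X$ is not compound Poisson forces the drift $d<0$, so $X=dt+C$ for some compound Poisson $C$ with jump times $\fpt_1<\fpt_2<\cdots$ and sizes $Y_1,Y_2,\ldots$.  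Any realization of $E_2$ must then be some $\fpt_k$ with $d\fpt_k+Y_1+\cdots+Y_k=a$, a measure-zero constraint since $\fpt_k$ has a Gamma density independent of the $Y_i$.  Either way $\pr(E_2)=0$.

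For $E_1$, the key observation is that $X(t-)=a$ together with $X(s)<a$ for $s<t$ forces $\overline X(t-)=a$, so $E_1$ is contained in the event that the supremum process reaches $a$ continuously in the left limit.  Writing $\overline X=H\circ L$ with $H$ the ascending ladder height subordinator of $X$ and $L$ its (continuous) local time at the supremum, this is contained in $\{H(s-)=a \text{ for some } s>0\}$.  Classical fluctuation theory for bounded-variation L\'evy processes (see \cite{bertoin:96}, Chapters~VI--VII) identifies the drift of $H$ with $d^+=\max(d,0)=0$ under our hypotheses, so $H$ is a pure-jump subordinator; combining this with the joint overshoot--undershoot law in \cite[Proposition~III.2]{bertoin:96} and the fact that the potential measure of a driftless subordinator has no atom at $a>0$, the first-passage undershoot of $H$ over $a$ is almost surely strictly positive, giving $\pr(E_1)=0$.

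The main obstacle is the fluctuation-theoretic chain for $E_1$: one must both identify the ladder-height drift under bounded variation plus non-positive drift, and then convert drift-zero of $H$ into vanishing of the undershoot-zero event.  By contrast, $E_2$ is essentially routine compensation-formula bookkeeping, with a short auxiliary argument for the finite-L\'evy-measure sub-case.
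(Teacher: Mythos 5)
Your decomposition $E = E_1 \cup E_2$ is a valid reorganization of the event, and the argument for $E_2$ (a jump landing exactly on $a$) is sound: the compensation formula plus continuity of the law of $X(t)$ when $\Ldf(\Reals)=\infty$, and the Gamma-density/Fubini argument when $\Ldf(\Reals)<\infty$ (which, since $X$ is not compound Poisson, forces $d<0$), both work. This is in the same spirit as the second half of the paper's (A.1), though you split by finiteness of $\Ldf$ while the paper funnels both cases through the diffuseness of the potential measure $U$ of $X$ (Proposition~I.15 of Bertoin).

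The gap is in $E_1$. You assert that ``the potential measure of a driftless subordinator has no atom at $a>0$.'' This is false in general: a standard Poisson process is a driftless subordinator whose potential measure has an atom at every positive integer. And under the hypotheses $H$ can indeed be compound Poisson — this happens whenever $0$ is irregular for $\prs$, e.g.\ when $\Ldf\prs<\infty$ and $d<0$. To rescue the step you would need to show that the jump distribution of $H$ (the law of the first strict ladder height $X(\fpt_1)$) is continuous, so that $U_H$ is diffuse; this does follow from the compensation formula together with the diffuseness of $U$, but that is precisely the tool the paper uses more directly and your write-up never invokes it. The paper avoids the ladder machinery altogether: it handles the ``$X(t-)=a$ with a jump'' case by bounding it with $\Ldf(\Reals\setminus(-\rx,\rx))\,U(\{a\})=0$, and the ``no jump, $X(t-)=X(t)=a$'' case by noting that $X$ with bounded variation and non-positive drift does not creep upward and that, for independent $\eta\sim\dexp(1)$, $\overline X(\eta)$ is either identically $0$ or infinitely divisible with L\'evy measure $\nu(\dd x)=\cf{x>0}\int_0^\infty t^{-1}e^{-t}\pr\{X(t)\in\dd x\}\,\dd t$, which inherits diffuseness from $U$, so $\pr\{\overline X(\eta)=a\}=0$. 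Your route through $H$ is not wrong in spirit, but it relies on a regularity property of $U_H$ that is not automatic and whose justification would in any event pass through the same facts about $U$ — so as written, $E_1$ is not disposed of.
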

\begin{proof}
  For each $t>0$, denote $A_t=\{\overline X(s)<a \text{ all } s<t\}$.
  We first show
  \begin{align}
    &
    \pr\Cbr{\exists t>0 \text{ s.t.\ $X(t-)=a$, $\jp
        X(t)\not=0$}} \nonumber\\
    &\qquad\qquad
    = 0 = \pr\Cbr{\exists t>0 \text{ s.t.\ $A_t$, $X(t)=a$, $\jp
        X(t)\not=0$}}.  \label{eq:bv-1}
  \end{align}
  
  Let $\Ldf$ be the $\levy$ measure of $X$.  Given $\rx>0$,
  \begin{align*}
    \cf{\exists t>0 \text{ s.t.\ $X(t-)=a$, $|\jp X(t)|\ge
        \rx$}} \le
    \sum_{t: |\jp X(t)|\ge \rx} \cf{X(t-)=a}.
  \end{align*}
  Take expectation on both side and apply the compensation formula to
  get
  \begin{align*}
    &
    \pr\Cbr{\exists t>0 \text{ s.t.\ $X(t-)=a$, $|\jp X(t)|\ge
        \rx$}}\\
    &
    \quad
    \le \Ldf(\Reals\setminus (-\rx,\rx))
    \int_0^\infty \pr\Cbr{X(t-)=a}\,\dd t 
    \le \Ldf(\Reals\setminus (-\rx,\rx)) U(\{a\}),
  \end{align*}
  where $U(\cdot) = \int \pr\{X(t)\in\,\cdot\}\,\dd t$ is the
  potential measure of $X$.  Since $U$ is diffuse
  \cite[Proposition~I.15]{bertoin:96} and $\Ldf(\Reals
  \setminus(-\rx, \rx))<\infty$, the left hand side is 0 for any
  $\rx>0$, showing the first half of \eqref{eq:bv-1}.  On the other
  hand, since $A_t$ implies $X(t-)\le a$,
  \begin{align*}
    \pr\Cbr{\exists t>0 \text{ s.t.\ $A_t$, $X(t)=a$, $\jp X(t)\not=
        0$}} \le \pr\Cbr{\exists t>0 \text{ s.t. $\jp X(t) = a -
        X(t-)>0$}},
  \end{align*}
  which is 0 by the argument for Proposition~III.2 in
  \cite{bertoin:96}.  This shows the second half of \eqref{eq:bv-1}.

  To complete the proof, it only remains to show
  \begin{align*}
    \pr\Cbr{\exists t>0 \text{ s.t.\ $A_t$ and $X(t-)=X(t)=a$}} = 0,
  \end{align*}
  or equivalently, $\pr\Cbr{\fpt^*<\infty}=0$, where $\fpt^* =
  \inf\{t>0: A_t \text{ and } X(t-)=X(t)=a\}$.  Let $\fpt_a =
  \inf\{t>0: X(t)>a\}$.  Clearly, $\{\fpt^*<\fpt_a\} \subset
  \{\fpt^*<\infty\} \subset \{\fpt^* \le \fpt_a\}$.  Since $\{\fpt^* = 
  \fpt_a<\infty\}\subset \{X$ creeps across $a$ at $\fpt_a\}$, has 0
  probability \cite[cf.][Exercise VI.9]{bertoin:96}, $\pr\Cbr{\fpt^* <
    \infty} = \pr\Cbr{\fpt^*<\fpt_a}$.  Let $\eta\sim \dexp(1)$ be
  independent of $X$.  If $\pr\Cbr{\fpt^*<\fpt_a}>0$, then
  $\pr\Cbr{\fpt^* < \eta < \fpt_a}>0$ and hence $\pr\{\overline
  X(\eta)=a\}>0$.  However, from the fluctuation identity
  \cite[Theorem VI.5]{bertoin:96}, $\overline X(\eta)$ is either
  constant 0 or infinitely divisible with $\levy$ measure $\nu(\dd x)
  = \cf{x>0}\int_0^\infty t^{-1} e^{-t} \pr\Cbr{X(t)\in \dd x} \,\dd
  t$.   In the latter case, since $U$ is diffuse, $\nu$ is also
  diffuse, implying the distribution of $\overline X(\eta)$ is
  continuous on $\prs$ \cite[cf.][Remark 27.3]{sato:99}.  As a result,
  $\pr\{\overline X(\eta)=a\}=0$.  The contradiction implies
  $\pr\Cbr{\fpt^*<\infty}=0$.
\end{proof}

Applying the result to $X$ and $-X$ respectively and using union-sum
inequality, we get
\begin{cor} \label{cor:bv}
  Let $X$ be a $\levy$ process taking values $\Reals$ with bounded
  variation and no drift.  Suppose $X$ is not compound Poisson.  Then
  for any $a>0$, $b>0$,
  \begin{align*}
    \pr\Cbr{\exists t>0 \text{ such that }
      -b< \underline X(s)\le \overline X(s)<a \text{ all } s<t, X(t-)
      \text{ or } X(t) 
      = -b \text{ or } a} = 0. 
  \end{align*}
\end{cor}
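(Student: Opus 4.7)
The plan is to reduce the two-sided event in the corollary to two one-sided events, apply Proposition \ref{prop:bv} separately to $X$ and to $-X$, and then combine the results via a union bound (the ``union-sum inequality'' alluded to in the statement).

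First I would verify that both $X$ and $-X$ satisfy the hypotheses of Proposition \ref{prop:bv}. Since $X$ has no drift, its drift is $0$, which is non-positive; the drift of $-X$ is also $0$, hence non-positive. Bounded variation is preserved under negation, and $X$ is compound Poisson if and only if $-X$ is, so neither is compound Poisson. Thus Proposition \ref{prop:bv} applies to $X$ with upper level $a$ and to $-X$ with upper level $b$.

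Next, let $E$ denote the event in the corollary. On $E$ there exists $t>0$ with $-b<\underline X(s)\le\overline X(s)<a$ for all $s<t$, and with at least one of the four equalities $X(t-)=a$, $X(t)=a$, $X(t-)=-b$, $X(t)=-b$ holding. Write $E=E_+\cup E_-$, where $E_+$ is the subset on which $X(t-)=a$ or $X(t)=a$, and $E_-$ is the subset on which $X(t-)=-b$ or $X(t)=-b$. On $E_+$, the upper half of the two-sided restriction supplies $\overline X(s)<a$ for all $s<t$, so $E_+$ is contained in the event addressed by Proposition \ref{prop:bv} applied to $X$ at level $a$, which has probability $0$. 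On $E_-$, the lower half supplies $\underline X(s)>-b$ for all $s<t$, equivalently $\overline{(-X)}(s)<b$ for all $s<t$, and the terminal equalities translate to $(-X)(t-)=b$ or $(-X)(t)=b$; hence $E_-$ is contained in the event addressed by Proposition \ref{prop:bv} applied to $-X$ at level $b$, also of probability $0$. The union bound then yields $\pr(E)\le\pr(E_+)+\pr(E_-)=0$.

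The proof is essentially bookkeeping; the only point requiring a moment of care is that the partition of $E$ into $E_+$ and $E_-$ genuinely yields the required containment in each half. This is immediate since each of $E_\pm$ automatically inherits the corresponding one-sided restriction from the two-sided one, with no new distributional work needed.
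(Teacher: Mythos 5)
Your proposal is correct and follows exactly the route the paper intends: the paper derives the corollary by applying Proposition \ref{prop:bv} to $X$ (level $a$) and to $-X$ (level $b$) and combining with the union-sum inequality, which is precisely your decomposition into $E_+$ and $E_-$. The hypothesis check (no drift is non-positive for both $X$ and $-X$, and bounded variation and non-compound-Poisson are preserved under negation) is the only bookkeeping needed, and you have it.
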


\section{Sampling of first passage event for a subordinator}
\label{sec:fpe-sub}

We call a function $c$ ``regular'' if it satisfies the conditions in
Theorem \ref{thm:conditional}, i.e., $c$ is absolutely continuous and
non-increasing on $\prs$ with $c(0+) > 0$, and is differentiable on
$\prs\setminus F$, where $F$ is a closed set of Lebesgue measure 0.
Note that if $c$ is regular, then for any constant $a>0$, $c\wedge a$
is also regular.

Let $Z$ be a subordinator with $\levy$ measure $\Ldf$ and no drift,
such that 1) $\Ldf\prs=\infty$ and $\Ldf$ can be decomposed as
\begin{align} \label{eq:levy-decompose}
  \Ldf(\dd x) = e^{-q x}\cf{x\le r} \Lambda(\dd x) + \chi(\dd x),
  \quad q\ge 0, \quad r>0,
\end{align}
with $\chi\prs<\infty$, and 2) letting $S$ be a subordinator with
$\levy$ measure $\Lambda$ and no drift, its passage event across any
regular function can be sampled.

To utilize $S$ to sample the first passage event of $Z$ across a
regular boundary $c$, let $X_1$, $X_2$, $X_3$, and $Q$ be 
independent subordinators with no drift, and with $\levy$ measures
$e^{-q x} \cf{x\le r}\Lambda(\dd x)$, $(1-e^{-q x}) \cf{x\le r}
\Lambda(\dd x)$, $\cf{x>r}\Lambda(\dd x)$, and $\chi$, respectively.
Among the four, only $X_1$ is not compound Poisson.  Represent $Z$ and
$S$ as
\begin{align*}
  Z=X_1 + Q, \quad
  S=\norm X = X_1 + X_2 + X_3 \quad\text{with}\
  X =  (X_1, X_2, X_3)'.
\end{align*}
Denote $\fpt^Z_c = \inf\{t>0: Z(t)>c(t)\}$, $\fpt^S_c = \inf\{t>0:
S(t)>c(t)\}$, and $\jp Q$ the jump process of $Q$.  Table
\ref{a:sub-fpt} describes a general procedure to sample the first
passage event of $Z$ across $c$.  It actually does a little more.
Given a terminal point $0<K\le \infty$, it samples $(\fpt, Z(\fpt-),
\jp Z(\fpt))$, where $\fpt= \fpt^Z_c \wedge K$.  In particular, if
$c=\infty$ and $K=1$, the procedure samples an infinitely divisible
random variable with $\levy$ measure $\Ldf$.

\begin{table}[ht]
  \caption{\label{a:sub-fpt}
    Sampling of $(\fpt, Z(\fpt-), \jp Z(\fpt))$, where $\fpt =
    \fpt^Z_c\wedge K$, $c$ is a regular function or $\infty$, $0<K\le
    \infty$ (finite if $c\equiv\infty$)
  }
  \begin{display}
  \item[*\ ] Set $T=H=D=0$, $A=K$, $b(\cdot)\equiv c(\cdot)$
  \item If $D=0$, then sample $(D, J)\sim (\fpt_Q\wedge A, \jp
    Q(\fpt_Q\wedge A))$, where $\fpt_Q=\inf\{t: \jp Q(t)>0\}$.
  \item Sample $t_1\sim\fpt^S_{b\wedge r}$ and set $t = t_1\wedge D$.
  \item If $t=t_1<D$, then sample $(s,v)\sim (S(t-), \jp S(t))$
    conditional on $\fpt^S_{b\wedge r}=t$.
  \item If $t=D<t_1$, then sample $s\sim S(t)$ conditional on $S(t) <
    b(t)\wedge r$ and set $v=0$.
  \item Sample $x\sim X_1(t)$ conditional on $X_1(t) + X_2(t)=s$.
  \item If $v>0$, then sample $U\sim \dunif(0,1)$ and reset $v\setto v
    \cf{v\le r, U\le e^{-q v}}$.
  \item Update $T \setto T  +t$.  Set $\Delta=v+\cf{t=D} J$,
    $z=x+\Delta$, and update $H\setto H+z$. 
  \item If $z< b(t)$ and $t<A$, then update $A\setto
    A-t$, $D\setto D- t$, $b(\cdot)\setto b(\cdot+t)-z$, and go
    back to step 1 to start a new iteration; else output
    $(T,H-\Delta,\Delta)$ and stop.
  \end{display}
\end{table}

\begin{theorem} \label{thm:sampling1}
  If $c$ is a regular function and $0<K\le \infty$, then the procedure
  in Table \ref{a:sub-fpt} stops w.p.~1, and its output is a sample
  value of $(\fpt,\, Z(\fpt-),\, \jp Z(\fpt))$, where
  $\fpt=\fpt^Z_c\wedge K$.  The claim is still true if $c\equiv
  \infty$ and $K<\infty$.
\end{theorem}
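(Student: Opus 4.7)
The plan is to verify three things: (i) a single iteration correctly samples the joint law of the ``next event'' of $Z$, (ii) iterations concatenate via the strong Markov property, and (iii) the loop terminates w.p.~1. The last will be the main obstacle.

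\textbf{Single iteration.} Fix an iteration in which $S$ aims at the regular target $b\wedge r$ with remaining horizon $A$. By the standing assumption on $S$ together with Theorem \ref{thm:conditional}, one can sample $(t_1,S(t_1-),\jp S(t_1))=(\fpt^S_{b\wedge r},\,S(\fpt^S_{b\wedge r}-),\,\jp S(\fpt^S_{b\wedge r}))$. Independently, the compound Poisson $Q$ yields $(D,J)$ in closed form, and $t=t_1\wedge D$ is the correctly distributed next event. On $\{t=D<t_1\}$ the procedure sets $v=0$ and samples $s\sim S(t)\gv S(t)<b(t)\wedge r$, which is the correct conditional law of $S(t)$ given $\fpt^S_{b\wedge r}>t$. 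Because $S$ stays below $r$ on $[0,\fpt^S_{b\wedge r})$ and every jump of $X_3$ exceeds $r$, the process $X_3$ has no jump on $[0,t)$, whence $S(t-)=X_1(t-)+X_2(t-)$. Theorem \ref{thm:joint}.2 applied to the $\nns^3$-valued L\'evy process $X=(X_1,X_2,X_3)$ then states that conditional on $(t,s,v)$ the pair $(X(t-),\jp X(t))$ factors as $\pr\{X(t)\in\dd u\gv S(t)=s\}\times \Ldf_v(\dd w)$. Its first marginal for $X_1(t-)$ equals the law of $X_1(t)\gv X_1(t)+X_2(t)=s$, which step~5 samples; the jump law $\Ldf_v$ concentrates on the three coordinate axes with weights $e^{-qv}:1-e^{-qv}:0$ when $v\le r$ and $0:0:1$ when $v>r$, matching the Bernoulli assignment of step~6. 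Together with $\jp Q(t)=\cf{t=D}J$, this gives $\Delta=\jp Z(t)$ and $z=Z(t)$.

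\textbf{Concatenation.} If the stopping test $z\ge b(t)$ or $t\ge A$ fails, the strong Markov property of the independent L\'evy system $(X_1,X_2,X_3,Q)$ at the stopping time $t$ yields a fresh copy of the same system for times beyond~$t$, in which the relevant target for $Z$ becomes $c(\cdot+T)-Z(T)$ on horizon $K-T$. The updates $b\setto b(\cdot+t)-z$, $A\setto A-t$, and $D\setto D-t$ (the memoryless residual $Q$-waiting time when $D>t$) implement exactly this shift, so by induction on the iteration count the eventual output $(T,H-\Delta,\Delta)$ agrees in law with $(\fpt^Z_c\wedge K,\,Z(\fpt^Z_c\wedge K -),\,\jp Z(\fpt^Z_c\wedge K))$. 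Creeping of $S$ across a strictly decreasing segment of $b\wedge r$ is included automatically: the sampled $v$ may equal~$0$, with probability governed by Proposition \ref{prop:creep}, and all subsequent steps remain valid.

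\textbf{Termination.} When $c\equiv\infty$ and $K<\infty$, the target is unchanging and the increments $t_n$ are i.i.d.\ with the law of $\fpt^S_r\wedge \dexp(\chi\prs)$, which has positive mean; by the strong law $\sum t_n\to\infty$, so the test $t\ge A$ triggers in finitely many iterations. When $c$ is a regular function, note that $\Lambda\prs=\infty$ forces $\Lambda(0,r]=\infty$, so $X_1$ alone has infinite L\'evy measure on $(0,r]$ and hence $Z(t)\to\infty$ a.s.; this makes $\fpt^Z_c<\infty$ a.s.\ and the loop must stop via a $Z$-crossing. The remaining subtlety, and the principal technical obstacle, is to exclude a pathological accumulation $T_n\uparrow T_\infty<\fpt^Z_c$ with vanishing increments. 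I would handle this by a conditional Borel--Cantelli argument bounding $\pr\{t_n<\delta\gv \mathcal{F}_{T_{n-1}}\}$ via the explicit distribution of $\fpt^S_{b_n\wedge r}\wedge D_n$, using that the effective boundary $b_n$ remains bounded below on a compact sub-horizon as long as no crossing has occurred, and that $D_n$ is memoryless with bounded rate $\chi\prs$.
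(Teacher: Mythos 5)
Your single-iteration analysis and the concatenation via the strong Markov property are correct and follow the same route as the paper; the paper also leaves the $c\equiv\infty$, $K<\infty$ case to ``a similar argument,'' and your strong-law reasoning on the uncapped durations $\fpt^S_r\wedge\fpt_Q$ is a reasonable way to fill that in.

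The real gap is in termination for regular $c$. Your Borel--Cantelli sketch relies on the claim that the effective boundary $b_n$ --- equivalently, the gap $c(T_{n-1})-Z(T_{n-1})$ --- remains bounded below. That is not true in general. On the bad event $T_n\uparrow\theta<\infty$ with the loop never stopping, the gap $c(T_n)-Z(T_n)$ is strictly positive at each finite $n$ but converges to $c(\theta)-Z(\theta-)$, which can be $0$ (precisely when $Z(\theta-)=c(\theta)$, in which case $\theta=\fpt^Z_c$). When the gap vanishes, the conditional probability $\pr\{t_n\ge\delta\}$ tends to $0$ for every fixed $\delta>0$, so the uniform lower bound your conditional Borel--Cantelli requires is unavailable in exactly the case it most needs to handle.

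The paper instead closes termination by a pathwise contradiction that uses the decomposition $S=X_1+X_2+X_3$, $Z=X_1+Q$ more heavily. If $T_n\uparrow\theta\le\fpt<\infty$ while the loop never stops, then, $X_2$, $X_3$, $Q$ being compound Poisson, they eventually make no jumps in $(T_n,\theta)$, so $S(T_{n+1})-S(T_n)=Z(T_{n+1})-Z(T_n)$ for all large $n$; and since $Z(T_n)<c(T_1)<\infty$ is bounded and increasing, these increments eventually lie in $(0,r)$. The $S$-crossing rule at $T_{n+1}$ then gives $S(T_{n+1})-S(T_n)\ge[c(T_{n+1})-Z(T_n)]\wedge r$, and because the left side is $<r$ the minimum must be the first term, forcing $Z(T_{n+1})\ge c(T_{n+1})$ and contradicting that the procedure continued. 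This deterministic argument covers the shrinking-gap scenario that Borel--Cantelli cannot, and it is the piece your proof still needs.
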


\begin{proof}
  We only prove the claim for the case where $c$ is a regular
  function.  The case where $c\equiv \infty$ and $K<\infty$ can be
  similarly proved.

  Consider the first iteration.  With $A=K$, $D= t_Q \wedge K$.  Note
  $Z(t) = X_1(t)$ for $t<D$.  In step~2, with $b=c$, $t_1$ is a
  sample value of $\fpt^S_{c\wedge r}$ and $t$ that of $\fpt^* := 
  \fpt^S_{c\wedge r}\wedge \fpt_Q \wedge K$.  By independence,
  $t_1\not=D$ w.p.~1.  If $t_1>D$, then w.p.~1, $S(D-)=S(D)<c(D)\wedge
  r$.  Thus the pair $(s, v)$ generated by 
  steps~3 and 4 is a sample value of $(S(\fpt^*-), \jp S(\fpt^*))$
  conditional on $\fpt^*=t$.  Given $(\fpt^*, S(\fpt^*-), \jp
  S(\fpt^*))=(t, s, v)$, steps~5 and 6 sample $X_1(\fpt^*-)$ and
  $\jp 1(\fpt^*)$ from their joint conditional distribution, where
  $\jp 1$ is the jump process of $X_1$.  If $t=t_1<D$, i.e.\ $S$
  crosses $c\wedge r$ before $D$, then by part 2) of Theorem
  \ref{thm:joint}, $X_1(\fpt^*-)$ and $\jp 1(\fpt^*)$ are independent
  under the conditional distribution, following the distribution of
  $X_1(t)$ conditional on $S(t)=s$ and that of $\jp 1(t)$ conditional
  on $\jp S(t) = v$, respectively.  This is still true if $t=D<t_1$,
  as $X(D-)=X(D)$ and $\jp 1(D)=\jp S(D)=0$ w.p.~1.  By $s\le
  c(t)\wedge r\le r$, $\pr \{X_1(t)\in \cdot \gv S(t)=s\} =
  \pr\{X_1(t)\in \cdot \gv X_1(t)+X_2(t) =s\}$, hence the sampling
  of $x$ in step~5.  Clearly, $\jp S(t)=0$ implies $\jp 1(t)=0$.
  Suppose $\jp S(t)=v>0$.  The support of $\Ldf_X$ is within $\{(x_1,
  x_2, x_3): x_i\ge 0$, at most one is nonzero$\}$, such that for
  $y>0$, $\Ldf_X(\dd y \times \{0\}\times \{0\})= e^{-q y}\cf{y\le r}
  \Lambda(\dd y)$, $\Ldf_X(\{0\}\times\dd y\times\{0\}) = (1-e^{-q y})
  \cf{y\le r} \Lambda(\dd y)$, and $\Ldf_X(\{0\}\times \{0\}\times \dd
  y) = \cf{y>r}\Lambda(\dd y)$.  Then by Theorem \ref{thm:joint},
  $\pr\{\jp 1(t)\in\dd y\gv \jp S(t)=v\} = \Ldf_v(\dd y\times
  \{0\}\times \{0\}) = \cf{y=v\le r} e^{-q v}$, hence the updating
  of $v$ in step~6.  Put together, the triplet $(t, x, v)$ generated
  by the end of step~6 is a sample value of $(\fpt^*, X_1(\fpt^*-),
  \jp 1(\fpt^*))$, and $\Delta$ in step~7 is a sample
  value of $\jp Z(\fpt^*) = \jp 1(\fpt^*) + \jp Q(\fpt^*)$ and $z$
  that of $Z(\fpt^*)=X_1(\fpt^*-)+\jp Z(\fpt^*)$.  If the condition of
  termination is not satisfied, we can renew the sampling by shifting
  the origin to $(t, Z(t))$.  This justifies the updating of $A$ and
  $b$ in step~8.  Note that $D$ is the distance in time to the current
  jump of $Q$.  Once $D$ becomes 0, the next jump of $Q$ will be
  sampled.

  Let $T_0=0$, and for $n\ge 1$, $(T_n, H_n, \jp n)$ the value of
  $(T,H,\Delta)$ obtained by the end of the $n\th$ iteration.    By
  induction, we can make the following conclusion.  For $n\ge 1$,
  if $Z(T_{n-1})<c(T_{n-1})$ and $T_{n-1}<K$, then
  \begin{align} \label{eq:T-it}
    T_n = \inf\{t>T_{n-1}: S(t) - S(T_{n-1}) >[c(t) - Z(T_{n-1})]
    \wedge r \text{ or } \jp Q(t)>0\} \wedge
    K,
  \end{align}
  $H_n = Z(T_n)$ and $\jp n = \jp Z(T_n)$.  Evidently, $Z(T_n-) =
  H_n-\jp n$.

  To show that the procedure stops w.p.~1 and returns $(\fpt,
  Z(\fpt-), \jp Z(\fpt))$, it suffices to show $\pr\{T_n=\fpt$
  eventually$\}=1$.  It is clear that $T_0<\fpt$.  For $n\ge 1$, if
  $T_{n-1}<\fpt$, then, since $Z$ is strictly increasing w.p.~1,
  $Z(T_{n-1}) < Z(\fpt-)\le c(\fpt)\le c(T_{n-1})$.  Then by
  \eqref{eq:T-it}, $T_n >T_{n-1}$.  Observe that in this case, for any
  $t\in (T_{n-1}, T_n)$, 
  \begin{align*}
    Z(t) - Z(T_{n-1}) = X_1(t)- X_1(T_{n-1}) \le S(t) - S(T_{n-1}) \le
    c(t) - Z(T_{n-1}),
  \end{align*}
  giving $Z(t)\le c(t)$ and hence $T_n\le \fpt$.  Therefore, if $T_n
  \not= \fpt$ for all $n\ge 1$, then $T_n$ is strictly increasing and
  strictly less than $\fpt$.  Let $\theta=\lim T_n$.  Then $\theta\le
  \fpt<\infty$.  For $n\gg 1$, the compound Poisson processes $X_2$,
  $X_3$ and $Q$ make no jumps in $(T_n, \theta)$, and so $S(T_{n+1}) -
  S(T_n) = X_1(T_{n+1})-X_1(T_n) = Z(T_{n+1}) - Z(T_n)$.  Meanwhile,
  since $Z(T_n)<c(T_n)\le c(T_1)<\infty$, for $n\gg 1$, $0 <
  Z(T_{n+1}) - Z(T_n)<r$.  Then we get
  \begin{align*}
    r>Z(T_{n+1})-Z(T_n) = S(T_{n+1}) - S(T_n) \ge
    [c(T_{n+1})-Z(T_n)]\wedge r.
  \end{align*}
  It is easy to see that the inequalities imply $Z(T_{n+1})\ge
  c(T_{n+1})$.  The contradiction shows that w.p.~1, $T_n = \fpt$
  for some $n$.
\end{proof}

\section{Extensions to L\'evy processes with bounded variation}
\label{sec:fpe-bv}
\subsection{Non-positive drift, positive constant
  level} \label{ssec:fpe-nd}
Let $Z$ be a $\levy$ process with non-positive drift, such that its
$\levy$ measure $\Ldf$ satisfies
\begin{align} \label{eq:bv-condition}
  \Ldf(\Reals)=\infty, \quad 
  \int_{-\infty}^\infty (|x|\wedge 1) \Ldf(\dd x)<\infty.
\end{align}
Given constants $a>0$ and $0<K\le\infty$, let $\fpt^Z_a =
\inf\{t>0: Z(t)>a\}$ and $\fpt=\fpt^Z_a\wedge K$.  Decompose $Z$ as
$Z^+ - Z^-$, where $Z^\pm$ are independent subordinators with
$\levy$ measures
\begin{align} \label{eq:pn-levy}
  \Ldf^+(\dd x) = \cf{x>0}\Ldf(\dd x), \quad
  \Ldf^-(\dd x) = \cf{x>0} \Ldf(-\dd x),
\end{align}
respectively, with $Z^+$ having no drift.  Since the drift of $Z$ is
non-positive, if $\fpt_a^Z<\infty$, then w.p.~1, $Z$ makes a positive
jump at $\fpt_a^Z$ \cite[cf.][Exercise VI.9]{bertoin:96}.  Meanwhile,
$Z$ makes no jump at $K$ w.p.~1.  Therefore the only possible jump
that $Z$ can make at $\fpt$ is positive, giving $\jp Z(\fpt) =
\jp{Z^+}(\fpt)$, $Z^+(\fpt) = Z^+(\fpt-)+\jp Z(\fpt)$, and $Z^-(\fpt-)
= Z^-(\fpt)$.  We thus consider the sampling of $(\fpt, Z^+(\fpt-),
Z^-(\fpt), \jp Z(\fpt))$.  Table~\ref{a:nd-fpt} describes a procedure
to do this, which essentially follows the description in Section
\ref{ssec:overview2} but allows a terminal point $K\le \infty$ to be
included.  Note that by assumption, $Z^\pm$ cannot be both compound
Poisson.  If $\Ldf^+$ (resp.\ $\Ldf^-$) can be decomposed as in
\eqref{eq:levy-decompose}, then the procedure in Table~\ref{a:sub-fpt}
can be directly called in step 1 (resp.\ 2) in Table~\ref{a:nd-fpt}.
On the other hand, if one of $Z^\pm$ is compound Poisson, the
corresponding step is straightforward.

\begin{table}[ht]
  \caption{\label{a:nd-fpt}
    Sampling of $(\fpt, Z^+(\fpt-), Z^-(\fpt), \jp Z(\fpt))$, where
    $\fpt = \fpt^Z_a\wedge K$, $a$ is a positive constant, and $0<K\le
    \infty$
  }
  \begin{display}
  \item[*\ ] Set $T=H^+=H^-=0$, $A=K$, $b=a$
  \item Sample $(t, z^+, v)\sim (\fpt^+,  Z^+(\fpt^+-), \jp
    {Z^+}(\fpt^+))$, where $\fpt^+ = \fpt^{Z^+}_b \wedge A$.  Set
    $x=z^++v$.
  \item Sample $z^-\sim Z^-(t)$.
  \item Update $T\setto T + t$, $H^+\setto H^++x$, $H^- \setto
    H^- + z^-$. 
  \item If $x-z^-< b$ and $t<A$, then update $A\setto A-t$, $b\setto
    b+ z^- - x$, go back to step 1 to start a new independent
    iteration; else output $(T, H^+-v, H^-, v)$ and stop.
  \end{display}
\end{table}

\begin{prop} \label{prop:nd-fpt}
  Suppose $\Lsup_{t\toi} Z(t)=\infty$ w.p.~1 or $K<\infty$.  Then the
  procedure in Table~\ref{a:nd-fpt} stops w.p.~1, and its output is a
  sample value of $(\fpt, Z^+(\fpt-) , Z^-(\fpt), \jp Z(\fpt))$.
\end{prop}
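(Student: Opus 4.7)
The plan is to verify (a) correctness of the output distribution and (b) almost-sure termination. For (a), I would proceed by induction on the iteration count $n$ to show that the running state $(T_n, H^+_n, H^-_n)$ after the $n$th iteration has the law of $(T_n, Z^+(T_n), Z^-(T_n))$, where $T_n$ is the first time after $T_{n-1}$ (capped by $K$) at which $Z^+$ gains more than $b_n := a - Z(T_{n-1}) > 0$. Step~1 is justified by Theorem~\ref{thm:sampling1} applied to $Z^+$ with constant boundary $b_n$ and terminal point $K - T_{n-1}$, and step~2 is an independent draw of the $Z^-$-increment, correct because $Z^-$ is independent of $Z^+$ and hence of $t_n$. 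The strong Markov property of the pair $(Z^+, Z^-)$ at $T_n$ then makes the next iteration an independent copy of the same problem with parameters $(K - T_n, a - Z(T_n))$.

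I next identify $T_n$ at termination with $\fpt^Z_a \wedge K$. On each open interval $(T_{n-1}, T_n)$ the strict inequality $Z^+(t) - Z^+(T_{n-1}) < b_n$ (valid because $Z^+$, having no drift, does not creep across $b_n$) together with $Z^-(t) \ge Z^-(T_{n-1})$ gives $Z(t) < a$; combined with the non-termination condition $Z(T_{n-1}) < a$, this yields $Z(s) < a$ for all $s < T_n$, so $\fpt^Z_a \ge T_n$. The two exits of the loop are $t_n = K - T_{n-1}$ (forcing $T_n = K$) and $Z(T_n) \ge a$; since $Z$ has non-positive drift it can only leave $(-\infty, a]$ via a positive jump (as noted in the text preceding Table~\ref{a:nd-fpt}), so in the latter case $T_n = \fpt^Z_a$ and $\jp Z(T_n) = \jp{Z^+}(T_n)$ equals the sample $v$ returned in step~1, giving the correct output $(\fpt, H^+-v, H^-, v)$.

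For (b), suppose the procedure runs forever. Since each $t_n > 0$ w.p.~1, the times $T_n$ strictly increase to some $\theta \in (0, K]$, and the previous paragraph gives $\overline Z(s) < a$ for every $s < \theta$. If $\theta = \infty$, this contradicts $\limsup_{t \to \infty} Z(t) = \infty$. If $\theta < \infty$, right-continuity and existence of left limits of $Z^\pm$ at $\theta$ give $Z^\pm(T_n) \to Z^\pm(\theta-)$, hence $Z^-(T_n) - Z^-(T_{n-1}) \to 0$; combined with the strict crossing $Z^+(T_n) > a + Z^-(T_{n-1})$ and $Z(T_n) < a$ at each step, these squeeze $Z(\theta-) = a$. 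Thus the event $\{\overline Z(s) < a\ \forall s < \theta,\ Z(\theta-) = a\}$ occurs, which has probability zero by Proposition~\ref{prop:bv} ($Z$ is not compound Poisson since $\Ldf(\Reals) = \infty$ by \eqref{eq:bv-condition}). The case $K < \infty$, $\theta = K$ is identical. The main obstacle is this finite-$\theta$ analysis: one must rule out an accumulation of the sampling times strictly below $\fpt$, which is precisely what the no-creeping content of Proposition~\ref{prop:bv} delivers.
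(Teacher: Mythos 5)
Your overall strategy is the same as the paper's: identify the iterates $T_n$ as in \eqref{eq:bv-Tn-1} via induction and the strong Markov property, show $Z$ stays below $a$ before $T_n$, and invoke Proposition~\ref{prop:bv} to kill the boundary events. However, there is a genuine gap in your termination-correctness step. The loop exits when $Z(T_n)\ge a$, not when $Z(T_n)>a$, and your sentence ``since $Z$ has non-positive drift it can only leave $(-\infty,a]$ via a positive jump, so in the latter case $T_n=\fpt^Z_a$'' does not cover the event that the procedure stops with $Z(T_n)=a$ exactly: on that event $Z$ has not left $(-\infty,a]$ at all, $T_n$ need not equal $\fpt^Z_a$, and the output would be wrong. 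One must prove this event is null; that is precisely the ``first case'' in the paper's proof, which combines the w.p.~1 bound $\sup\{Z(t):t<T_n\}<a$ (the paper's \eqref{eq:z-bv-1}) with Proposition~\ref{prop:bv} applied at $t=T_n$. You use Proposition~\ref{prop:bv} only for the infinite-iteration case, so this second application is missing from your argument, even though the tool is already in your hands.

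A secondary problem is your justification of the strict inequality $Z^+(t)-Z^+(T_{n-1})<b_n$ on $(T_{n-1},T_n)$ ``because $Z^+$ does not creep.'' Absence of creeping controls the value at the passage time, not before it: the increment of $Z^+$ can jump exactly onto the level $b_n$ and sit there until $T_n$ (ruling this out for infinite-activity $Z^+$ needs the separate compensation-formula argument that a driftless subordinator a.s.\ never hits a given point; and if $Z^+$ is compound Poisson, which is allowed since only one of $Z^\pm$ must have infinite L\'evy mass, it can happen with positive probability). Moreover, even granting pointwise $Z(t)<a$ for $t<T_n$, the running supremum $\overline Z(s)$ could still equal $a$ through a left limit, and it is $\overline Z(s)<a$ for all $s<T_n$ (resp.\ $s<\theta$) that Proposition~\ref{prop:bv} requires — both in the stopping case you omitted and in your infinite-loop case. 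The paper avoids both issues by deriving $Z^+(T_n-)-Z^-(T_{n-1})\le a$, using strict monotonicity of whichever of $Z^\pm$ is not compound Poisson, and excluding $Z(T_n-)=a$ by another application of Proposition~\ref{prop:bv}, building this into the induction for \eqref{eq:z-bv-1}; your proof needs the same care at this step. (For the weaker conclusion $\fpt^Z_a\ge T_n$ alone, the non-strict bound $Z(t)\le a$ would already suffice.)
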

\begin{proof}
  Let $T_0=0$, $H^+_0 = H^-=0$, and for $n\ge 1$, let $(T_n, H^+_n,
  H^-_n, v_n)$ be the value of $(T, H^+, H^-, v)$ at the end of
  the $n\th$ iteration.  By induction, for $n\ge 1$, the procedure has
  to continue into the $n\th$ iteration if and only if $Z(T_k)<a$ and
  $T_k<K$ for all $0\le k<n$, and in this case,
  \begin{align} \label{eq:bv-Tn-1}
    T_n = \inf\{t>T_{n-1}: Z^+(t) - Z^+(T_{n-1})>a - Z(T_{n-1})\}
    \wedge K > T_{n-1},
  \end{align}
  and $H^+_n = Z^+(T_n)$, $H^-_n = Z^-(T_n)$, $v_n = \jp Z(T_n)$.
  By $Z^+(T_n-) - Z^+(T_{n-1})\le a - Z(T_{n-1})$,
  \begin{align} \label{eq:z-bv-0}
    Z^+(T_n-) - Z^-(T_{n-1}) \le a.
  \end{align}
  We show that for $n\ge 1$, if the procedure has to continue into the
  $n\th$ iteration, then
  \begin{align} \label{eq:z-bv-1}
    \sup\{Z(t): t<T_n\} <a \qquad\text{w.p.~1}
  \end{align}
  Consider $n=1$.  Since $Z$ is right-continuous, there is $\rx>0$, 
  such that $\overline Z(\rx) <a$, where $\overline Z$ is defined as
  in \eqref{eq:extreme}.  Since at least one of $Z^\pm$ is strictly
  increasing, by \eqref{eq:z-bv-0}, for $\rx \le s\le t < T_1$, $Z(s)
  \le Z^+(t)-Z^-(\rx) < Z^+(T_n-)\le a$.  As a result, $\overline
  Z(t)<a$.  If \eqref{eq:z-bv-1} is not true, then there must be
  $Z(T_1-)=a$.  However, by Proposition \ref{prop:bv}, the probability
  for this to happen 0.  We then get \eqref{eq:z-bv-1} for $n=1$.  For
  $n\ge 2$, by renewal argument, if the procedure has to continue into
  the $n\th$ iteration, then $\sup\{Z(t) - Z(T_{n-1}): T_{n-1} \le t
  <T_n\} < a - Z(T_{n-1})$, which together with induction yields
  \eqref{eq:z-bv-1}.

  By assumption, $\fpt<\infty$ w.p.~1.  To finish the proof, it
  suffices to show w.p.~1, $T_n=\fpt$ eventually.  The compliment of
  the event has two cases.  The first one is that the procedure stops
  at the end of an iteration with $T_n \not= \fpt$.  In this case, by
  \eqref{eq:z-bv-1}, $T_n < \fpt \le K$.  Now $T_n<\fpt$ implies
  $Z(T_n)\le a$, while $T_n<K$ together with the stopping rule of the
  procedure implies $Z(T_n)\ge a$.  Thus $Z(T_n)=a$.  Also by
  \eqref{eq:z-bv-1}, $\overline Z(t)<a$ for all $t<T_n$.  However, by
  Proposition~\ref{prop:bv}, the probability of this case is 0.  The
  second case is that the procedure goes on forever.  In this case,
  by \eqref{eq:z-bv-1}, $Z(T_n)<a$ and $T_n<\fpt$ for all $n$.  Then
  by \eqref{eq:bv-Tn-1}, $T_n$ is strictly increasing with a limit
  $\theta\le \fpt$.  Now for any $t<\theta$, $\overline Z(t)<a$.
  Meanwhile, by $Z^+(T_{n+1}) - Z^+(T_n)\ge a - Z(T_n)>0$, letting
  $n\toi$ yields $Z(\theta-)=a$.  By Proposition \ref{prop:bv}, the
  probability for such $\theta$ to exist is also 0.
\end{proof}

\subsection{No drift, first exit out of an interval}
\label{ssec:fpe-bv-band}
Now consider the sampling of the first exit from an interval.  Let $Z$
be a $\levy$ process with \emph{no\/} drift, such that its $\levy$
measure $\Ldf$ satisfies \eqref{eq:bv-condition}.  Given constants
$a^\pm>0$ and $0<K\le \infty$, denote $I=[-a^-, a^+]$ and let
$\fpt^Z_I = \inf\{t>0: Z(t)\not\in I\}$, $\fpt = \fpt^Z_I \wedge K$.
As in Section \ref{ssec:fpe-nd}, write $Z = Z^+ - Z^-$, where $Z^\pm$ 
are independent subordinators with $\levy$ measures $\Ldf^\pm$,
respectively, and no drift.  Then $Z$ makes a positive jump if it
first exits $I$ at $a^+$, and a negative jump if it first exists $I$
at $-a^-$.  We thus consider the sampling of $(\fpt, Z^+(\fpt-),
Z^-(\fpt-), \jp{Z^+}(\fpt), \jp{Z^-}(\fpt))$.

Suppose that $\Ldf^\pm$ can be decomposed as in
\eqref{eq:levy-decompose}.  To be specific, for $\sigma\in \{\pm\}$,
\begin{align*}
  \Ldf^\sigma(\dd x) = \exp(-q^\sigma x) \cf{x\le r^\sigma}
  \Lambda^\sigma(\dd x) + \chi^\sigma(\dd x),
\end{align*}
where $q^\sigma\ge 0$ and $0<r^\sigma\le\infty$ are constants and
$\chi^\sigma\prs<\infty$, such that, letting $S^\sigma$ be a
subordinator with $\levy$ measure $\Lambda^\sigma$ and no drift, its
passage event across any positive constant level can be sampled.
In the case where $Z^\sigma$ is compound Poisson, we simply set
$S^\sigma\equiv 0$ and the time of the first passage of 
$S^\sigma$ across any positive boundary to be $\infty$.  Note that,
since $\Ldf(\Reals) = \infty$, at most one $Z^\sigma$ is compound
Poisson.

To utilize $S^\pm$ to sample  $(\fpt, Z^+(\fpt-), Z^-(\fpt-),
\jp{Z^+}(\fpt), \jp{Z^-}(\fpt))$, the idea is similar to that in
Section \ref{sec:fpe-sub}.  Table \ref{a:bv-fpt} describes a procedure
to do this.  In each iteration, we have to monitor two passage times,
i.e., the times when $S^\sigma$ cross $b^\sigma\wedge r^\sigma$,
$\sigma\in\{\pm\}$, respectively, where $b^\sigma$ is a constant
obtained from $a^\sigma$.  To simplify notation, denote by
$\fpt^S_{b\wedge r}(\sigma)$ the first passage time of $S^\sigma$
across $b^\sigma \wedge r^\sigma$.  Represent $S^\sigma = \norm
{X^\sigma} = X_1^\sigma + X_2^\sigma + X_3^\sigma$, where $X^\sigma =
(X_1^\sigma, X_2^\sigma, X_3^\sigma)'$ and $X_i^\sigma$ are
subordinators with $\levy$ measures $\exp(-q^\sigma x)\cf{x\le
  r^\sigma} \Lambda^\sigma(\dd x)$, $[1-\exp(-q^\sigma x)]\cf{x\le
  r^\sigma} \Lambda^\sigma(\dd x)$, and $\cf{x> r^\sigma}
\Lambda^\sigma(\dd x)$, respectively.  All of $X_i^\sigma$, $i\le 3$,
$\sigma\in\{\pm\}$ are assumed to be independent with no drift.

\begin{table}[ht]
  \caption{\label{a:bv-fpt}
    Sampling of $(\fpt, Z^+(\fpt-), Z^-(\fpt-), \jp {Z^+}(\fpt),
    \jp {Z^-}(\fpt))$, where $\fpt = \fpt^Z_I\wedge K$, $I = [-a^-,
    a^+]$, $a^-$, $a^+>0$ are constants, and $0<K\le \infty$
  }
  \begin{display}
  \item[*\ ] Set $T=H^+=H^-=D=0$, $A=K$, $b^+=a^+$,
    $b^-=a^-$.  Let $Q$ be a compound Poisson process with $\levy$
    measure $\chi^++\chi^-$.
  \item If $D=0$, then sample $(D, J)\sim (\fpt_Q\wedge A, \jp
    Q(\fpt_Q \wedge A))$ and set $J^+ = J\vee 0$, 
    $J^- = (-J)\vee 0$, where $\fpt_Q=\inf\{t: \jp
    Q(t)\not=0\}$.
  \item For $\sigma\in \{\pm\}$, sample $t^\sigma\sim
    \fpt^S_{b\wedge r}(\sigma)$.  Set $t = t^+\wedge
    t^-\wedge D$.  (Note: w.p.~1, $t^\pm$ and $D$ are different from
    each other.) 
  \item For $\sigma\in\{\pm\}$, sample $(x^\sigma, v^\sigma)\sim
    (X^\sigma_1(t-), \jp {X_1^\sigma}(t))$ 
    conditional on $\fpt^+\wedge \fpt^-\wedge D=t$, by applying steps
    3--6 in Table \ref{a:sub-fpt} to $X^\sigma$.
  \item Update $T\setto T +t$.  For $\sigma\in\{\pm\}$, set
    $\Delta^\sigma=v^\sigma + \cf{t= D} J^\sigma$, $z^\sigma
    =x^\sigma+\Delta^\sigma$, and update $H^\sigma\setto
    H^\sigma+z^\sigma$.
  \item If $z^+-z^-\in (-b^-, b^+)$ and $t<A$, then update $A\setto
    A-t$, $D\setto D-t$, $b^+ \setto b^+ + z^- - z^+$, $b^-
    \setto b^- + z^+ - z^-$, and go back to step 1 to start a new
    iteration; else output $(T,H^+-\Delta^+, H^--\Delta^-, \Delta^+,
    \Delta^-)$ and stop.
  \end{display}
\end{table}

\begin{prop} \label{prop:bv-fpt}
  Suppose $Z\not\equiv 0$.  Then the procedure in Table \ref{a:bv-fpt}
  stops w.p.~1, and its output is a sample value of $(\fpt,
  Z^+(\fpt-), Z^-(\fpt-), \jp{Z^+}(\fpt), \jp{Z^-}(\fpt))$.
\end{prop}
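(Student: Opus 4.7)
The plan is to mirror the proof of Proposition~\ref{prop:nd-fpt}, with Corollary~\ref{cor:bv} supplying the required null-event bound and per-iteration correctness inherited from Theorem~\ref{thm:sampling1}. The argument splits into three stages: identifying the meaning of the accumulated state, verifying one iteration, and proving termination.

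For the first stage, set $T_0=0$ and let $(T_n,H^\pm_n,\Delta^\pm_n)$ denote the value of $(T,H^\pm,\Delta^\pm)$ at the end of iteration~$n$. By induction, the procedure enters iteration $n$ precisely when $Z(T_k)\in(-a^-,a^+)$ and $T_k<K$ for every $k<n$, and in that case
\[
T_n \;=\; \inf\{t>T_{n-1}:\ Z^+(t)-Z^+(T_{n-1})>a^+-Z(T_{n-1})\ \text{or}\ Z^-(t)-Z^-(T_{n-1})>a^-+Z(T_{n-1})\}\wedge K,
\]
with $H^\pm_n=Z^\pm(T_n)$ and $\Delta^\pm_n=\jp{Z^\pm}(T_n)$. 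Shifting the origin to $(T_{n-1},Z^\pm(T_{n-1}))$, the variables $A$ and $b^\pm$ updated in step~6 become precisely the remaining distances to $K$ and to the two band boundaries, so a single iteration reduces to the setup of Section~\ref{ssec:fpe-bv-band}.

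For the second stage, independence of $S^+$, $S^-$ and $Q$ makes the minimum $t=t^+\wedge t^-\wedge D$ in step~3 the correct first-passage time of $(S^+,S^-,Q)$ out of the target rectangle, and w.p.~1 the three candidates are distinct. Conditional on which attains the minimum, step~4 invokes the subordinator procedure of Table~\ref{a:sub-fpt} separately for each $\sigma\in\{\pm\}$, whose correctness is Theorem~\ref{thm:sampling1}. Because $S^\sigma$ has only jumps of size $\le r^\sigma$ up to time $t$, we have $Z^\sigma=X^\sigma_1$ on $[0,t]$, so step~5 correctly attaches the compound Poisson contribution of $Q$ when $t=D$, yielding the joint sample of $(Z^\pm(t-),\jp{Z^\pm}(t))$ claimed in stage one.

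For termination, $\fpt<\infty$ a.s.: if $K<\infty$ this is trivial; otherwise $Z\not\equiv 0$ implies that the bounded-variation L\'evy process exits every bounded interval in finite time. The complement $\{T_n=\fpt\ \text{eventually}\}^c$ splits into two cases. Case~(a): the procedure halts at some $T_n\ne\fpt$; then $T_n<K$ and the stopping rule forces $Z(T_n)\notin(-a^-,a^+)$, while $T_n<\fpt\le\fpt^Z_I$ forces $Z(T_n)\in[-a^-,a^+]$, giving $Z(T_n)\in\{-a^-,a^+\}$ with $Z$ strictly inside the band on $[0,T_n)$ by stage one. Case~(b): the iterations never stop; then $T_n\uparrow\theta\le\fpt$, and passing to the limit in the recursion for $T_n$ shows $Z(\theta-)\in\{-a^-,a^+\}$ with $Z$ strictly inside the band on $[0,\theta)$. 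Both events have probability zero by Corollary~\ref{cor:bv} applied with $a=a^+$, $b=a^-$. The main obstacle is case~(b): one must verify that the remaining-room variables $b^\pm$ actually collapse to zero along any accumulation subsequence, which follows from the fact that the defining infimum for $T_{n+1}$ forces either $Z^+(T_{n+1})-Z^+(T_n)$ or $Z^-(T_{n+1})-Z^-(T_n)$ to exceed the current remaining room at each step.
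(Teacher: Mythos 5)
Your overall plan — mirror the proof of Proposition~\ref{prop:nd-fpt}, substituting Corollary~\ref{cor:bv} for Proposition~\ref{prop:bv} — matches the paper. However, the central object is misidentified. You write
\[
T_n=\inf\{t>T_{n-1}:\ Z^+(t)-Z^+(T_{n-1})>a^+-Z(T_{n-1})\ \text{or}\ Z^-(t)-Z^-(T_{n-1})>a^-+Z(T_{n-1})\}\wedge K,
\]
but this is not what Table~\ref{a:bv-fpt} produces. Unlike Table~\ref{a:nd-fpt}, which calls the full subroutine of Table~\ref{a:sub-fpt} and therefore advances one $Z^+$-crossing at a time, Table~\ref{a:bv-fpt} advances by a \emph{single} crossing of the carrier $S^\sigma$ past the residual threshold $b^\sigma\wedge r^\sigma$, or by the next jump of $Q$, whichever comes first. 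The correct recursion is in terms of $S^\pm$ and $\jp Q$, with the thresholds capped at $r^\pm$:
\[
T_n=\inf\{t>T_{n-1}:\ S^\sigma(t)-S^\sigma(T_{n-1})>\bigl(a^\sigma\mp Z(T_{n-1})\bigr)\wedge r^\sigma \text{ for some }\sigma,\ \text{or}\ \jp Q(t)\neq 0\}\wedge K.
\]
This distinction is not cosmetic: your formula makes the termination argument in case~(b) look trivial, since it lets you assert directly that $Z^\sigma$-increments exceed the remaining room. With the correct recursion, the iteration only guarantees that $S^\sigma$-increments exceed $(a^\sigma\mp Z(T_n))\wedge r^\sigma$; to transfer this to $Z^\sigma$ one must add the observation (as in the proof of Theorem~\ref{thm:sampling1}) that for $n$ large enough the compound Poisson pieces $X_2^\sigma,X_3^\sigma,Q$ make no jumps in $(T_n,\theta)$, so that $S^\sigma(T_{n+1})-S^\sigma(T_n)=Z^\sigma(T_{n+1})-Z^\sigma(T_n)$, and one must also argue that the $\wedge r^\sigma$ becomes inactive once increments shrink below $r^\sigma$. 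Your closing sentence elides exactly this step.

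Two lesser points. First, the claim that $Z$ stays strictly inside $(-a^-,a^+)$ on $[0,T_n)$ is asserted ``by stage one'' but stage one does not prove it; this is the analogue of~\eqref{eq:z-bv-1}, which in the paper is derived from the $T_n$ recursion together with Proposition~\ref{prop:bv}/Corollary~\ref{cor:bv} (it is precisely where the diffuseness of the potential measure enters), and it needs an explicit argument in the interval case as well. Second, the correctness of step~3 is not ``Theorem~\ref{thm:sampling1}'' — that theorem concerns the complete procedure of Table~\ref{a:sub-fpt}; what step~3 uses is the conditional-recovery analysis of steps~3--6 inside the proof of Theorem~\ref{thm:sampling1}, together with Theorem~\ref{thm:joint}~2) and Theorem~\ref{thm:conditional}. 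These are fixable, but as written the proposal's termination argument has a genuine gap stemming from the mislabeled $T_n$.
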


\begin{proof}
  First, $\fpt<\infty$ w.p.~1 as $\Lsup_{t\toi} |Z(t)|=\infty$
  \cite[Theorem VI.12]{bertoin:96}.  Let $T_0=0$, $H_0^+=H_0^-=0$,
  and for $n\ge 1$, let $(T_n, H_n^+, H_n^-, \Delta_n^+, \Delta_n^-)$
  be the value of $(T, H^+, H^-, \Delta^+, \Delta^-)$ obtained by the
  end of the $n\th$ iteration.  By induction and the same argument as
  in the proof of Theorem \ref{thm:sampling1}, for $n\ge 1$, the
  procedure has to continue into the $n\th$ iteration if and only if
  $Z(T_k)\in (-a^-,a^+)$ and $T_k<K$ for $0\le k<n$, and in this case,
  \begin{align*}
    T_n &=\inf\{t>T_{n-1}:
    S^+(t)-S^+(T_{n-1})> a^+-Z(T_{n-1}), \\
    &\qquad\quad\text{or } 
    S^-(t) - S^-(T_{n-1})> a^-+Z(T_{n-1}), \text{ or } \jp
    Q(t)>0\}\wedge K > T_{n-1}, \\
    H^+_n &= Z^+(T_n), \quad H^-_n = Z^-(T_n), \quad
    \Delta^+_n = \jp {Z^+}(T_n), \quad
    \Delta^-_n = \jp {Z^-}(T_n),
  \end{align*}
  and, as in the proof of Proposition \ref{prop:nd-fpt}, $Z^+(T_n-) -
  Z^-(T_{n-1})\le a^+ $, $\sup\{Z(t): t<T_n\} < a^+$, and likewise, by
  considering $-Z(t)$, $Z^-(T_n-)-Z^+(T_{n-1})\le a^-$, 
  $\inf\{Z(t): t<T_n\}>-a^-$.  The rest of the proof follows that of
  Proposition \ref{prop:nd-fpt}, except Corollary \ref{cor:bv} is
  used.
\end{proof}

\section{Sampling issues involved} \label{sec:issues}
The procedures in previous sections involve several types of
sampling, some of which are standard, while the others have to be 
dealt with on a case-by-case basis.  Consider the procedure in
Table~\ref{a:sub-fpt}.  The main task of its step~1 is
\begin{align} \label{eq:sampling-cp}
  \text{sample the first jump of a compound Poisson process on
    $\prs$}.
\end{align}
The rest of the procedure requires a subordinator $S$ with infinite
$\levy$ measure $\Lambda$ and analytically tractable properties be
available.  Under this prerequisite, given regular boundary $c$,
$t>0$, and $0<s\le r$, the main tasks of steps~2--5 are to sample
\begin{gather}
  \text{$\fpt^S_c=\inf\{t>0: S(t)>c(t)\}$},
  \label{eq:sampling-ft}
  \\
  \text{$(S(t-), \jp S(t))$, conditional on $\fpt^S_c=t$}, 
  \label{eq:sampling-c1}
  \\
  \text{$S(t)$, conditional on $S(t)<c(t)$, and}
  \label{eq:sampling-c2} \\
  \text{$X_1(t)$, conditional on $X_1(t) + X_2(t)=s$,}
  \label{eq:sampling-c3}
\end{gather}
respectively, where $X_1$, $X_2$ are independent subordinators with no
drift and with $\levy$ measures $\cf{x\le r} e^{-q x} \Lambda(\dd x)$
and $\cf{x\le r} (1-e^{-q x}) \Lambda(\dd x)$, respectively.  The
procedures in Tables \ref{a:nd-fpt} and \ref{a:bv-fpt} also boil down
to \eqref{eq:sampling-cp}--\eqref{eq:sampling-c3}.

For \eqref{eq:sampling-cp}, recall that if a compound Poisson process
$Q$ has $\levy$ measure $\chi\not=0$, then the time and size of its
first jump are independent with p.d.f.\ $q e^{-q t}\cf{t>0}$ and
distribution $\chi/q$, respectively, where $q=\int\dd\chi$ 
\cite[cf.][]{bertoin:96, sato:99}.  For complicated $\chi$, a
rejection sampling method known as ``thinning'' can be used 
\cite[cf.][]{devroye:86b, glasserman:04}.  Let $\mu$ be a $\levy$
measure such that 1) $\alpha=\int\dd\mu<\infty$ is readily available,
2) $\alpha^{-1}\mu$ is easy to sample, and 3) $\dd\chi =
\varrho\,\dd\mu$ for some function $\varrho\le 1$ that is easy to
compute.  Then the thinning based on $\mu$ is as follows.
\begin{display}
\item[* ] Set $t=0$
\item Sample $s$ from the distribution with p.d.f.\ $\alpha e^{-\alpha
    s}\cf{s>0}$.  Update $t\setto t+s$.
\item Sample $x$ from the distribution $\alpha^{-1}\mu$ and $U\sim
  \dunif(0,1)$.
\item If $U\le \varrho(x)$, then stop and output $(t,x)$ as a sample
  of the time and size of the first jump of $Q$; else go back to
  step~1.
\end{display}

For \eqref{eq:sampling-ft}, since $S$ is strictly increasing w.p.~1
and $c$ is non-increasing,
\begin{align} \label{eq:dist-ft}
  \pr\{\fpt_c^S \le t\} = \pr\{S(t)\ge c(t)\},
\end{align}
with each side being continuous and strictly increasing in $t>0$.
If the probability distribution of $S(t)$ is analytically available,
then $\fpt_c^S$ may be sampled by inversion method, i.e., sample
$U\sim\dunif(0,1)$ and return the unique value of $t$ satisfying
$\pr\{S(t)\ge c(t)\}=U$ \cite[cf.][]{devroye:86b}.  Alternatively, if
$S$ has scaling property, it can be utilized to sample $\fpt_c^S$.
Both possibilities are illustrated later.  The sampling for
\eqref{eq:sampling-c1} heavily relies on the results in Section
\ref{sec:distribution}, and its detail need be dealt with on a
case-by-case basis.  The sampling for \eqref{eq:sampling-c2} has the
following generic solution: keep sampling $x\sim S(t)$ until $x\le a$.
However, by utilizing the structure of $S(t)$, 
it is possible to make the sampling significantly more
efficient.

Finally, some comment on \eqref{eq:sampling-c3}.  Give $t>0$, for the
non-trivial case $q>0$, if $S(t)$ has a bounded p.d.f.\ $\pdf_t$,
then in principle rejection sampling can be used \cite{devroye:86b,
  hormann:04, liu:01:book, glasserman:04}.  Indeed, since the $\levy$
measure of $X_1(t)$ is $e^{-q x} \nu(\dd x)$, where $\nu(\dd x) = t
\cf{x\le r}\Lambda(\dd x)$ is that of $X_1(t)+X_2(t)$, $\pr\{X_1(t)\in
\dd x\} \propto e^{-q x} \pr\{X_1(t)  + X_2(t)\in \dd x\}$.  Recall
$S=X_1+X_2+X_3$, where $X_3$ has $\levy$ measure $\cf{x>r} \Lambda(\dd
x)$.  Since $X_3(t)$ is either 0 or $>r$, $X_1(t)+X_2(t)$ has
p.d.f.\ $\pdf_t(x)/\pr\{X_3(t)=0\}$ on $(0,r]$.  It follows that
$X_1(t)$ has a p.d.f.\ on $(0,r]$ in proportion to $e^{-q x}
\pdf_t(x)$, and  given $s\in (0,r]$,
\begin{align} \label{eq:tilting}
  \pr\{X_1(t)\in\dd x\gv X_1(t)+X_2(t)=s\}
  \propto e^{-q x} \pdf_t(x) \pr\Cbr{X_2(t)\in s-\dd x}.
\end{align}
Thus to sample \eqref{eq:sampling-c3}, we may keep sampling $v\sim
X_2(t)$ and $U\sim \dunif(0,1)$ until $v\le s$ and $\sup_x g_t(x)
\cdot U\le e^{-q(s-v)} \pdf_t(s-v)$ and then output $s-v$.  Here,
since $X_2(t)$ is compound Poisson, its sampling is standard
\cite{devroye:86b,   glasserman:04}.  However, a problem is that
$\pdf_t$ can be hard to evaluate.  To get around the problem, the
structure of $S(t)$ needs to be exploited.

\section{Exponentially tilted upper truncated stable
  L\'evy density} \label{sec:stable}
Consider the measure $\Ldf(\dd x) = \cf{x\le r} e^{-q x} \Lambda(\dd
x) +\chi(\dd x)$ specified in \eqref{eq:h0}, where $\Lambda(\dd x) =
\cf{x>0}\gamma x^{-1-\alpha}\,\dd x$ with $\alpha\in (0,1)$.  Let $c$
be a regular function as defined in Section \ref{sec:fpe-sub}.  As an 
application of the procedure in Table \ref{a:sub-fpt}, we next show an
algorithm to sample the first passage event of a subordinator $Z$ with
$\levy$ measure $\Ldf$ and no drift across $c$.  Using the
procedure in Table \ref{a:nd-fpt} or \ref{a:bv-fpt}, an algorithm can
be similarly devised to sample the first passage event of a $\levy$
process with non-positive or no drift across a constant level or
interval, when its $\levy$ measure is $\cf{0<x\le r^+} \gamma^+
e^{-q^+ x} x^{-1-\alpha^+}\,\dd x + \cf{-r^-\le x < 0} \gamma^-
e^{-q^- x} |x|^{-1-\alpha^-}\,\dd x + \chi(\dd x)$ with $\alpha^\pm\in
(0,1)$.  We omit detail on this. 

Let $S$ be a stable subordinator with $\levy$ measure $\Lambda$, so
that for $\lambda$, $t>0$, $\mean[e^{-\lambda S(t)}] =
\exp\{-t\gamma\Gamma(1-\alpha)\alpha^{-1} \lambda^\alpha\}$.  By
scaling of time, assume $\gamma = \alpha/\Gamma(1-\alpha)$ without
loss of generality.  Then $S(1)$ is a ``standard'' stable variable
with p.d.f.
\begin{align}
  \begin{array}{c}
    \displaystyle
    f(x) = \frac{\alpha}{(1-\alpha)\pi}\int_0^\pi h(x,\theta)\,
    \dd\theta,
    \quad
    \text{where} \\[1em]
    \displaystyle
    h(x, \theta) = \cf{x>0} h_0(\theta)\, x^{-1/(1-\alpha)}
    \exp\{-h_0(\theta)\, x^{-\alpha/(1-\alpha)}\}, \quad
    \text{with} \\[1em]
    h_0(\theta) = \sin[(1-\alpha)\theta]
    [\sin(\alpha \theta)]^{\alpha/(1-\alpha)}
    (\sin \theta)^{-1/(1-\alpha)}.
  \end{array}
  \label{eq:h}
\end{align}
The sampling of $S(1)$ is well-known \cite{chambers:76, devroye:86b,
  zolotarev:66}.  Define function
\begin{align*}
  \psi(x) =
  \cf{x\not=0}x^{-1}(1-e^{-x}) + \cf{x=0}.
\end{align*}

Let $0<K\le\infty$ and $\fpt = \inf\{t>0: Z(t)>c(t)\}\wedge K$.  An
algorithm to sample $(\fpt, Z(\fpt-), \jp Z(\fpt))$ is as follows.

\begin{display}
  \item[*\ ] Set $T=H=D=0$, $A=K$, $b(\cdot)\equiv c(\cdot)$,
    $M_\alpha = (1-\alpha)^{1-1/\alpha} \alpha^{-1-1/\alpha}
    e^{-1/\alpha}$.
  \item Sample $(D,J)$ as in step~1 in Table \ref{a:sub-fpt}.
  \item Sample $S(1)$.  Set $t_1$ such that
    $t_1^{1/\alpha} S(1)= b(t_1)\wedge r$, $t = t_1\wedge
    D$, and $z=b(t)\wedge r$.
  \item If $t=t_1<D$, then set
    \begin{align*}
      w_0 = -\left.\frac{\dd (b(u)\wedge r)}{\dd u}
      \right|_{u=t}, 
      \quad
      w_1 = \frac{\gamma z^{1-\alpha}}{\alpha(1-\alpha)}
    \end{align*}
    and do the following steps.  (Note: w.p.~1, $b(u)\wedge r$ is
    differentiable at $t$ with a non-positive derivative.)
    \begin{enumerate}[topsep=0em,itemsep=0em]
    \item Sample $\iota\in\{0,1\}$
      such that $\pr\{\iota=i\} = w_i/(w_0+w_1)$.  If $\iota=0$, then
      set $s=z$, $v=0$; else sample $\beta_1\sim\dbeta(1, 1 -
      \alpha)$, $\beta_2 \sim\dbeta(\alpha,1)$, and set $s = \beta_1
      z$, $v = (z-s)/\beta_2$.
    \item Sample $\vartheta\sim \dunif(0,\pi)$ and $U\sim
      \dunif(0,1)$.  If $U > h((t)^{-1/\alpha} s,
      \vartheta)/M_\alpha$, then go back to step~3(a).
    \end{enumerate}
  \item If $t=D<t_1$, then sample $S(t)$ conditional on $S(t)\le z$.
    Set $s = S(t)$ and $v=0$.
  \item Set
    \begin{align*}
      C_k = \frac{[s^{1-\alpha} t \gamma q\Gamma(1-\alpha)]^k}
      {k!\Gamma(1+k(1-\alpha))}, \quad k\ge 0, \qquad
      C = \sum_{k=0}^\infty C_k.
    \end{align*}
    Then do the following steps.
    \begin{enumerate}[topsep=0em,itemsep=0em]
    \item Sample $\kappa\in \{0,1,2\ldots\}$, such that
      $\pr\{\kappa = k\} = C_k/C$.
    \item If $\kappa = 0$, set $x=s$, $\varrho=1$; else sample 
      $\beta\sim \dbeta(1, k(1-\alpha))$ and $(\eno \omega \kappa)
      \sim \ddir(1-\alpha, \ldots, 1-\alpha)$, and
      set $x=s\beta$, $\varrho = \prod_i \psi(q (s-x) \omega_i)$.
    \item Sample $\vartheta \sim\dunif(0,\pi)$ and $U\sim\dunif(0,1)$.
      If $M_\alpha U>\varrho e^{-q x} h(t^{-1/\alpha} x,
      \vartheta)$, then go back to step 4(a); else output $x$ to the
      next step as a random sample of $X_1(t-)$ conditional on
      $X_1(t-) + X_2(t-)=s$.
    \end{enumerate}
  \item  The rest is the same as steps~6--8 in
    Table~\ref{a:sub-fpt}.
\end{display}

We next justify the algorithm.  All its steps correspond 1-to-1 to
those in Table~\ref{a:sub-fpt}, and only steps~2, 3 and 5 contain new
detail.  Denote $a=b\wedge r$, which is regular.   With $t_1$ being
the unique solution to $t^{1/\alpha} S(1) = a(t)$, from
\eqref{eq:dist-ft} and scaling property of $S$, $\pr\{\fpt^S_a \le t\}
= \pr\{t^{1/\alpha} S(1) \ge a(t)\} = \pr\{t_1\le t\}$, leading to
step~2.  Given $\fpt^S_a=t_1$ and $\fpt^*= t$, where $\fpt^* =
\fpt^S_a \wedge D$, by step~3 in Table~\ref{a:sub-fpt}, if $t=t_1<D$,
then we need to sample $(S(t-), \jp S(t))$ conditional on $\fpt^S_a =
t$.  From Theorem \ref{thm:conditional}
\begin{align*}
  \pr\{S(t-)\in \dd s, \jp S(t)\in \dd v\gv \fpt^S_a=t\}
  \propto
  g_t(s)\,[w_0 \delta(\dd s - z)\,\delta(\dd v) 
  + w_1 \rho(s,v) \,\dd s\,\dd v],
\end{align*}
with $z=a(t)$, $w_0 = |a'(t)|$, $w_1 = \gamma z^{1-\alpha}/[\alpha
(1-\alpha)]$, and $\rho$ is the following p.d.f.
\begin{align} \label{eq:rho}
  \rho(s,v) = 
  \cf{0\le z-s<v} \alpha(1-\alpha)z^{-1+\alpha} v^{-1-\alpha}.
\end{align}
Define random vector $(\iota, \zeta, V)$, such that $\pr\{\iota=0\} =
1-\pr\{\iota=1\} = w_0/(w_0 + w_1)$, $\pr\{\zeta=z, 
V=0\gv \iota=0\}=1$, $\pr\{\zeta\in\dd s, V\in\dd v\gv \iota=1\} =
\rho(s,v)\,\dd s\,\dd v$.  Let $\vartheta\sim \dunif(0,\pi)$ be
independent of $(\iota, \zeta, V)$.  Then by $\pdf_t(s) =
t^{-1/\alpha} f(t^{-1/\alpha} s)$, where $f$ given in \eqref{eq:h},
\begin{align}
  &
  \pr\{S(t-)\in \dd s, \jp S(t)\in \dd v\gv \fpt^S_a=t\} 
  \nonumber
  \\
  &\hspace{1cm}
  \propto \int h(t^{-1/\alpha} s, \theta) 
  \pr\{\vartheta\in \dd \theta,\iota\in \dd i, \zeta\in\dd s,
  V\in\dd v\}, \label{eq:mu-integral}
\end{align}
with the integral only over $\theta$ and $i$.  It is seen that
$\zeta\sim (1-U_1^{1/(1-\alpha)})z$ and conditional on $\zeta$, $V\sim
(z-\zeta) U_2^{-1/\alpha}$, with $U_1$, $U_2$ i.i.d.\ $\sim
\dunif(0,1)$.  Thus step 3(a) samples $(\vartheta, \iota, \zeta, V)$.
Next, for $x>0$ and $\theta\in (0,\pi)$, by change of variable
$s=x^{-\alpha/(1-\alpha)}$ in the expression of $h$,
\begin{align*}
  h(x, \theta) \le
  \sup_{\theta\in (0,\pi)} \Sbr{h_0(\theta)
    \times\sup_{s>0} (s^{1/\alpha} e^{-h_0(\theta) s})
  }
  = \alpha^{-1/\alpha} e^{-1/\alpha}
  \sup_{\theta\in (0,\pi)} h_0(\theta)^{1-1/\alpha}.
\end{align*}
Since $\sin(t\theta) / \sin(\theta) \ge t$ for $\theta\in (0,\pi)$ and
$t\in (0,1)$, $h_0(\theta)\ge (1-\alpha) \alpha ^{\alpha /
  (1-\alpha)}$, giving $h(x, \theta) \le M_\alpha$.  Thus,
step 3 is a rejection sampling procedure for the distribution
which is proportional to $h(t^{-1/\alpha} s, \theta) \pr\{\vartheta
\in\dd \theta, \iota\in \dd i, \zeta \in \dd s, V\in \dd v\}$.  Then
by \eqref{eq:mu-integral}, $(s,v)$ is a sample of $(S(t-), \jp S(t))$
conditional on $\fpt_a^S=t$.   The entire step~3 is now justified.

By step~5 in Table \ref{a:sub-fpt}, given $(\fpt^*, S(\fpt^*-)) = (t,
s)$ with $s\in (0,r]$, we need to sample $X_1(t)$ conditional on
$X_1(t)+X_2(t)=s$, where $X_1$ and $X_2$ are independent subordinators
with $\levy$ measures $\cf{x\le r} e^{-q x} \Lambda(\dd x)$ and
$\cf{x\le r} (1-e^{-q x}) \Lambda(\dd x)$, respectively.  By
\eqref{eq:tilting},
\begin{align*}
  \pr\{X_1(t)\in \dd x\gv X_1(t)+X_2(t)=s\}
  \propto e^{-q x} g_t(x)
  \pr\{X_2(t)\in s-\dd x\}, \quad 0\le s\le s.
\end{align*}
Since $X_2(t)$ is compound Poisson with $\levy$ density $\lambda(x) =
\cf{0<x\le r} t\gamma (1-e^{-q x}) x^{-1-\alpha}$,
\begin{align*}
  \pr\Cbr{X_2(t)\in s-\dd x}
  \propto \cf{0\le x\le s}\Sbr{
    \delta(s-\dd x) + \sum_{k=1}^\infty
    \frac{\lambda^{*k}(s-x)\,\dd x}{k!}
  },
\end{align*}
where $\lambda^{*k}$ is the $k$-fold convolution of $\lambda$.  For
$w>0$ and $k>1$, 
\begin{align*}
  \lambda^{*k}(w)
  &
  = w^{k-1}   \int \prod_{i=1}^k \lambda(w v_i)\,\sigma_k(\dd v),
\end{align*}
where $\sigma_k$ is the measure specified in Section
\ref{sec:prelim}.  Since $0\le w\le s \le r$, by the definition of
$\psi$ and Dirichlet distribution, for any $v=(\eno v k)$ with $v_i\ge
0$ and $\norm v=1$,
\begin{align*}
  \prod_{i=1}^k \lambda(w v_i) 
  = 
  (t\gamma)^k \prod_{i=1}^k \frac{1-e^{-q w v_i}}{(w v_i)^{1+\alpha}}
  &= 
  (t\gamma)^k q^k w^{-k\alpha} \prod_{i=1}^k \psi(q w v_i)
  \prod_{i=1}^k \nth{v_i^\alpha} \\
  &
  = w^{-k\alpha}
  \frac{[t\gamma q\Gamma(1-\alpha)]^k}{\Gamma(k(1-\alpha))}
  f_k(v) \prod_{i=1}^k \psi(q w v_i),
\end{align*}
where $f_k$ denotes the p.d.f.\ of $\ddir(\eno a k)$, with all
$a_i=1-\alpha$.  Denote by $\omega_k = (\omega_{k1}, \ldots, \omega_{k
  k})$ a random variable following the Dirichlet distribution.  Then
\begin{align*}
  \lambda^{*k}(s-x)\,\dd x
  = 
  (s-x)^{k(1-\alpha)-1}\,\dd x \times
  \frac{[t\gamma q\Gamma(1-\alpha)]^k}{\Gamma(k(1-\alpha))}
  \mean\Sbr{\prod_{i=1}^k \psi(q (s-x) \omega_{k i})}.
\end{align*}
Note that $\cf{0\le x\le s} k(1-\alpha)(s-x)^{k(1-\alpha)-1} /
s^{k(1-\alpha)}$ is the p.d.f.\ of $s\beta_k$, where $\beta_k\sim
\dbeta(1, k(1-\alpha))$.  Then, with $C_k$ the same as in the
algorithm, we get
\begin{align*}
  \lambda^{*k}(s-x)\,\dd x = k! C_k \pr\{s\beta_k\in \dd x\}
  \mean\Sbr{\prod_{i=1}^k \psi(q (s-x) \omega_{k i})}.
\end{align*}
The above identity also holds for $k=1$.  Combining with \eqref{eq:h},
we get
\begin{align*}
  &
  \pr\{X_1(t)\in \dd x\gv X_1(t)+X_2(t)=s\}\\
  &
  \propto e^{-q x} g_t(x)
  \Cbr{
    \delta(s-\dd x) + 
    \sum_{k=1}^\infty C_k \pr\{s\beta_k \in \dd x\}
    \mean\Sbr{\prod_{i=1}^k   \psi(q(s-x) \omega_{k i})}} \\
  &
  \propto \int_0^\pi e^{-q x} h(t^{-1/\alpha} x, \theta)\,\dd\theta
    \Cbr{
    \delta(s-\dd x) + 
    \sum_{k=1}^\infty C_k \pr\{s\beta_k \in \dd x\}
    \mean\Sbr{\prod_{i=1}^k \psi(q(s-x) \omega_{k i})}}.
\end{align*}
Starting from the last integral expansion, the treatment is similar to
step~3.  Define random vector $(\kappa, \zeta, \omega)$, such that
$\kappa\in \{0,1,2,\ldots\}$ with $\pr\{\kappa = k\} \propto C_k$,
conditional on $\kappa=0$, $\zeta=s$, $\omega=0$, and conditional on
$\kappa=k\ge 1$, $\zeta\sim s\beta_k$ and $\omega\sim \omega_k$ are
independent.  For any $x\in [0,s]$ and $w\in \cup_{k\ge 1}\Reals^k$,
define $\varrho(x, w) = \prod_{i=1}^k \psi(q(s-x) w_i)$, with $k$
equal to the dimension of $w$.  Finally, let $\vartheta\sim
\dunif(0,\pi)$ be independent from $(\kappa, \zeta, \omega)$.  Then
\begin{align*}
  &
  \pr\{X_1(t)\in \dd x\gv X_1(t)+X_2(t)=s\}\\
  &\quad\quad
  \propto \int e^{-q x} h(t^{-1/\alpha} x, \theta) \varrho(x,w)
  \pr\{\vartheta\in\dd \theta, \kappa\in\dd k, \zeta\in\dd x,
  \omega\in \dd w\}, 
\end{align*}
where the integral is only over $\theta$, $k$, and $w$.  Based on
this, it is seen step 5 is a rejection sampling procedure of $X_1(t)$
conditional on $X_1(t)+X_2(t)=s$.

\section{Finite mixture of exponentially tilted upper
  truncated stable L\'evy densities} 
\label{sec:mix-stable}
Given $I\ge 2$, $r, \eno\gamma I>0$, $q\ge 0$, and $\eno\alpha
I\in (0,1)$, let
\begin{align} \label{eq:mix-stable}
  \Ldf(\dd x)= \sum_{i=1}^I \ldf_i(x)\,\dd x + \chi(\dd x)
  \ \ \text{with}\ \ \ldf_i(x) = \cf{0<x\le
    r}\gamma_i e^{-q x} x^{-1-\alpha_i}.
\end{align}
The seemingly more general case where different $\ldf_i(x)$ have
different $r_i$ and $q_i$ is actually covered by
\eqref{eq:mix-stable}, once we set $\tilde r = \min r_i$, $\tilde q = 
\max q_i$, $\tilde\ldf_i(x) = \cf{1<x\le \tilde r} \gamma_i e^{-\tilde
  q x}  x^{-1-\alpha_i}$ and $\tilde\chi(\dd x) = \chi(\dd x) + \sum_i
[\ldf_i(x) - \tilde\ldf_i(x)]\,\dd x$.  As in last section, we shall
focus on the sampling for subordinators.  The extension to $\levy$
measures $\sum_{i=1}^I \ldf_i(\pm x)\,\dd x+\chi(\dd x)$ is rather
straightforward.

To apply the procedure in Table \ref{a:sub-fpt}, let $X_{i j}$,
$i=1,\ldots, I$, $j=1,2,3$, and $Q$ be independent subordinators with
no drift, such that $X_{i1}$, $X_{i2}$, and $X_{i3}$ have $\levy$
densities $\ldf_i(x)$, $\cf{0<x\le r} \gamma_i (1-e^{-q x})
x^{-1-\alpha_i}$, $\cf{x>r} \gamma_i x^{-1-\alpha_i}$, respectively,
and $Q$ has $\levy$ measure $\chi$.  Let $S_i = X_{i1} + X_{i2} +
X_{i3}$ and $Z = \sum_i X_{i1} + Q$.  Then $\eno S I$ are
independent stable processes with $\levy$ densities $\gamma_i x^{-1 -
  \alpha_i}$, respectively, and no drift, and $Z$ a subordinator with
$\levy$ measure $\Ldf$ and no drift.  Let $S = (\eno S I)$.  
Unlike previous sections, $S$ is multidimensional.  Let
$\Sigma=\norm S = \cum S I$.  Then $\Sigma(t) \sim \sum_{i=1}^I
t^{1/\alpha_i} S_i(1)$.  Let $d_i = [\alpha_i/\gamma_i
\Gamma(1-\alpha_i)]^{1/\alpha_i}$.  Then each $d_i S_i(1)$ has Laplace
transform $\mean[e^{-\lambda d_i S_i(1) }] =
\exp(-\lambda^{\alpha_i})$, $\lambda>0$, whose p.d.f.\ $f_i$ is given
in \eqref{eq:h}.  Let be $h_i$ the function in the integral
representation of $f_i$ in \eqref{eq:h}.

Let $0<K\le\infty$ and $\fpt = \inf\{t>0: Z(t)>c(t)\}\wedge K$.  An
algorithm to sample $(\fpt, Z(\fpt-), \jp Z(\fpt))$ is as follows.
\begin{display}
  \item[* ] Set $T=H=D=0$, $A=K$, $b(\cdot)\equiv c(\cdot)$, $M_i =
    (1-\alpha_i)^{1-1/\alpha_i}\alpha_i^{-1-1/\alpha_i}
    e^{-1/\alpha_i}$, $i\le I$
  \item Sample $(D,J)$ as in step~1 in Table \ref{a:sub-fpt}.
  \item Sample $S(1)$.  Set $t_1$ such that $\sum_{i=1}^I
    t_1^{1/\alpha_i} S_i(1) = b(t_1)\wedge r$, $t = t_1\wedge D$, and
    $z=b(t)\wedge r$.
  \item If $t=t_1<D$, then set
    \begin{align*}
      w_0 = -\nth{\Gamma(I)}
      \left.\frac{\dd (b(u)\wedge r)}{\dd u}\right|_{u=t},
      \quad
      w_i = \frac{\gamma_i z^{1-\alpha_i} \Gamma(1-\alpha_i)}{\alpha_i
        \Gamma(I+1-\alpha_i)}, \quad 1\le i\le I
    \end{align*}
    and do the following steps.
    \begin{enumerate}[topsep=0em, itemsep=0em]
    \item Sample 
      $\iota\in \{0,1,\ldots, I\}$, such that $\pr\{\iota=j\} =
      w_j/(w_0+w_1+\cdots w_I)$.  Sample $\omega=(\eno \omega I) \sim
      \ddir(1, \ldots, 1)$.  If $\iota=0$, then set $s_i= z\omega_i$,
      $i\le I$, and $v=0$.  If $\iota\ge 1$, then sample
      $\beta\sim \dbeta(I, 1-\alpha_\iota)$, $\beta'\sim
      \dbeta(\alpha_\iota, 1)$, and set $s_i = z\beta\omega_i$,
      $i\le I$, and $v = (z-s_1-\cdots - s_I)/\beta'$.
    \item Sample $\eno \vartheta I\sim \dunif(0,\pi)$ and
      $U\sim\dunif(0,1)$ independently.  If $U\ge \prod_{i=1}^I
      [h_i(d_i t^{-1/\alpha_i} s_i, \vartheta_i)/M_i]$, then go back
      to step 3(a).
    \end{enumerate}
  \item If $t=D<t_1$, then sample $S_1(1)$, $\ldots$, $S_I(1)$
    conditional on $\sum_{i=1}^I D^{1/\alpha_i} S_i(1)\le z$.
    Set $s=\sum_{i=1}^I D^{1/\alpha_i} S_i(1)$ and $v=0$.
  \item For $i\le I$, sample $x_{i1}$ from the distribution of
    $X_{i1}(t-)$ conditional on $X_{i1}(t-) +
    X_{i2}(t-) =s_i$, by executing step~5 in the
    algorithm in Section \ref{sec:stable} with the corresponding 
    parameters.  Set $x = x_{11}+x_{21} + \cdots + x_{I1}$.
  \item The rest is the same as steps 6--8 in
    Table~\ref{a:sub-fpt}.
\end{display}

To justify the algorithm, denote $a(t) = b(t)\wedge r$.  From
\eqref{eq:dist-ft} and scaling property of $S_i$, $\pr\{\fpt^\Sigma_a
\le t\}  = \pr\{\Sigma(t)\ge a(t)\} = \pr\{\sum_{i=1}^I t^{1/\alpha_i}
S_i(1) \ge a(t)\}$, which leads to step~2.

Given $\fpt^\Sigma_a=t_1$ and $\fpt^*=t$, where $\fpt^* =
\fpt^\Sigma\wedge D$, denote $z=a(t)$.  Denote by
$\pdf_t$ the p.d.f.\ of $\Sigma(t)$.  By Theorem
\ref{thm:conditional}, for $s=(\eno s I)$ with $s_i\ge 0$, and
$u, v\ge 0$,
\begin{align*}
  &
  \pr\{S(t-)\in \dd s,\, \Sigma(t-)\in\dd u,\,
  \Delta_\Sigma(t)\in \dd v\gv \fpt_a^\Sigma=t\}
  \\
  &\hspace{1cm}
  \propto\pdf_t(u)\,\pr\{S(t)\in \dd s\gv \Sigma(t) = u\}
  \\
  &
  \hspace{2cm}
  \times
  \Sbr{
    |a'(t)| \,\delta(\dd u - z)\delta(\dd v)
    + \cf{0\le z-u < v} 
    \sum_{i=1}^I \frac{\gamma_i\,\dd u\,\dd v}{v^{1+\alpha_i}}
  }.
\end{align*}
Since each $S_i(t)$ has p.d.f.\ proportional to $f_i(d_i
t^{-1/\alpha_i} x)$,
\begin{align*}
  g_t(u) \,\pr\{S(t)\in \dd s\gv \Sigma(t) = u\}
  \propto\prod_{i=1}^I f_i(d_i t^{-1/\alpha_i} s_i) \times
  \pr\{u\omega\in\dd s\}\times \frac{u^{I-1}}{\Gamma(I)},
\end{align*}
with $\omega=(\eno \omega I)\sim \ddir(1, \ldots, 1)$.  Let $w_0$,
$\eno w I$ be as in the algorithm.  Then
\begin{align*}
  \frac{u^{I-1}}{\Gamma(I)}
  \times |a'(t)|\, \delta(\dd u - z)\, \delta(\dd v)
  &= z^{I-1} w_0 \, \delta(\dd u-z)\, \delta(\dd v), \\
  \frac{u^{I-1}}{\Gamma(I)}\times \cf{0\le z-u < v}
  \frac{\gamma_i\,\dd u\,\dd v}{v^{1+\alpha_i}}
  &= z^{I-1}
  w_i\pr\{z\beta_i\in \dd u,\,
  (z-u)/\beta_i'\in \dd v\}, \quad i\ge 1,
\end{align*}
where $\beta_i\sim \dbeta(I, 1-\alpha_i)$ and $\beta_i'\sim
\dbeta(\alpha_i, 1)$ are independent.  The above identities together
with \eqref{eq:h} yield
\begin{align*}
  &
  \pr\{S(t-)\in \dd s,\, \Sigma(t-)\in\dd u, \jp\Sigma(t)\in\dd v
  \gv \fpt_a^\Sigma=t\} \\
  &\hspace{1cm}
  \propto
  \int_{\theta_i\in [0,\pi]}\prod_{i=1}^I h_i(d_i t^{-1/\alpha_i} s_i,
  \theta_i)\, \dd \theta_1\cdots\,\dd\theta_I\,
  \pr\{u\omega\in\dd s\}\\
  &\hspace{2cm}
  \times
  \Sbr{
    w_0\delta(\dd u-z)\,\delta(\dd v) + 
    \sum_{i=1}^I w_i \pr\{z\beta_i\in\dd u,\,
    (z-u)/\beta_i'\in\dd v\}
  }\!,
\end{align*}
where the integral is only over $\theta_i$'s.  Starting here the
treatment is similar to Section \ref{sec:stable}.  It is then seen
that step~3 samples $(s, v)\sim (S(t-), \jp \Sigma(t))$ conditional on
$\fpt^\Sigma_a=t<D$.

From step~4 in Table~\ref{a:sub-fpt} and the scaling property of
$S_i$, it is easy to see step~4 samples $\Sigma(D)$ conditional on
$D<\fpt^\Sigma_a$.  Finally, note the ultimate goal of step~5 in
Table~\ref{a:sub-fpt} is to sample $\sum_i X_{i1}(t-)$ conditional on
$\fpt^*=t$.  Since we have now sampled $(s,v)\sim (S(t-), \jp
\Sigma(t))$ conditional on $\fpt^*=t$, it suffices to sample
$(X_{11}(t-), \ldots, X_{I1}(t-))$ conditional on $(S(t-),
\jp\Sigma(t), \fpt^*)=(s, v, t)$.  Since in this case, $X_{i1}(t-) +
X_{i2}(t-) = s_i$ and $X_{i3}(t-)=0$, the conditional sampling is
equivalent to that of $X(t-)$, where $X=(X_{i j}, i=1,\ldots, I,
j=1,2,3)$.  By Theorem \ref{thm:joint}, if $t=t_1 < D$, then
\begin{align*}
  \pr\{X(t-)\in\dd x \gv \fpt^*=t,\, \Sigma(t-)=\norm s,\,
  \jp\Sigma(t)=v \}
  &= \pr\{X(t)\in \dd x \gv \Sigma(t) = \norm s\},\\
  \pr\{S(t-)\in\dd s \gv \fpt^*=t,\, \Sigma(t-)=\norm s,\,
  \jp\Sigma(t)=v\}
  &= \pr\{S(t)\in \dd s \gv \Sigma(t) = \norm s\},
\end{align*}
yielding $\pr\{X(t-)\in \dd x\gv \fpt^*=t,\,
S(t-)=s,\,\jp\Sigma(t)=v\} = \pr\{X(t)\in \dd x\gv S(t)=s\}$.  The
identity still holds if $t=D<t_1$.  By independence, the right hand
side is 
\begin{align*}
  \prod_{i=1}^I
  \pr\{X_{i1}(t)\in \dd x_{i1}, X_{i2}(t)\in \dd x_{i2}, X_{i3}(t)\in
  \dd x_{i3}\gv S_i(t)=s_i\},
\end{align*}
so to sample $(X_{11}(t-), \ldots, X_{I1}(t-))$ conditionally, it
suffices to sample $X_{i1}(t)$ independently, each conditional on
$S_i(t)=s_i$.  Therefore, step~5 in the algorithm in Section
\ref{sec:stable} can be used.  Once $X_{i1}(t)$ are sampled, their sum
is a sample value of $\sum_i X_{i1}(t)$.

\section{Upper truncated Gamma L\'evy density}
\label{sec:gamma}
Let $\Ldf(\dd x) = \ldf(x)\,\dd x + \chi(\dd x)$, where
\begin{align}
  \ldf(x) = \cf{0<x\le r} e^{-x} x^{-1}, \ r>0,
  \label{eq:levy-gamma}
\end{align}
and $\chi$ is a finite measure on $\prs$.  For a $\levy$ measure of
the more general form $\tilde\Ldf(\dd x) = 
\tilde\ldf(x)\,\dd x+ \tilde\chi(\dd x)$, where $\tilde\ldf(x) =
\cf{0<x\le \tilde r} \gamma e^{-q x} x^{-1}$, $\gamma$, $q>0$, the
sampling of the first passage event can be reduced to that for
$\Ldf$. Indeed, if $\tilde Z$ has $\levy$ measure $\tilde\Ldf$, then
$Z(t)=q \tilde Z(t/\gamma )$ has $\levy$ measure \eqref{eq:levy-gamma},
with $r=q \tilde r$, $\chi(\dd x) = \tilde\chi(\dd x/q)$ therein.
As a result, the first passage event of $\tilde Z$ across $\tilde
c$ can be deduced from that of $Z$ across $q\tilde
c(t/\gamma)$.

The sampling of the first passage event for $\Ldf$ is somewhat simpler
than those in previous sections, as exponential tilting is ``built
in'' the Gamma process.  Let $X_1$, $X_2$, and $Q$ be independent
subordinators with no drift and with $\levy$ measures $\ldf(x)\,\dd
x$, $\cf{x\ge r} e^{-x} x^{-1}\,\dd x$, and $Q$, respectively.  Then
$S=X_1+X_2$ is a Gamma process and $Z = X_1+Q$ has $\levy$ measure
$\Ldf$ and no drift.  Let $0<K\le\infty$ and $\fpt = \inf\{t>0:
Z(t)>c(t)\}\wedge K$.  An algorithm to sample $(\fpt, Z(\fpt-), \jp
Z(\fpt))$ is as follows.
\begin{display}
  \item[* ] Set $T=H=D=0$, $A=K$, $b(\cdot)\equiv c(\cdot)$
  \item Sample $(D,J)$ as in step~1 in Table \ref{a:sub-fpt}.
  \item Sample $U\sim \dunif(0,1)$.  Set $t_1$ such that
    $\int_{b(t_1)\wedge r}^\infty x^{t_1-1} e^{-x}\,\dd 
    x/\Gamma(t_1) = U$, $t = t_1\wedge D$, and $z=b(t)\wedge r$.
  \item If $t=t_1<D$, then set
    \begin{align*}
      w_0 &= -\left.\frac{\dd (b(u)\wedge r)}{\dd u}\right|_{u=t},
      \quad
      w_1 = \frac{2B(t,1/2) z}{e}, \quad
      w_2 = \nth t\\
      h_1(x,v)&
      = \cf{0\le z-x<v\le z}
      \frac{e^{z-x-v}(1-x/z)^{1/2}\ln[(1-x/z)^{-1}]}{2/e}, \\
      h_2(x,v)&
      = \cf{0\le z-x<z<v}\frac{z e^{-x}}{v}.
    \end{align*}
    and do the following steps.
    \begin{enumerate}[topsep=0em, itemsep=0em]
    \item Sample $\iota\in\{0,1\}$, such that $\pr\{\iota=i\} =
      w_i/(w_0 + w_1+w_2)$.  If $\iota=0$, then set $x=z$, $v=0$,
      $\eta=1$; if $\iota=1$, then sample $\beta\sim\dbeta(t,
      1/2)$, $\xi\sim \dunif(0,1)$, and set $x=z\beta$, $v =
      z(1-\beta)^\xi$, $\eta=h_1(x,v)$; if $\iota = 2$, then
      sample $\beta \sim \dbeta(t, 1)$, $\xi\sim\dexp(1)$,
      and set $x=z\beta$, $v=z+\xi$, $\eta=h_2(x,v)$.
    \item Sample $U\sim\dunif(0,1)$.  If $U>\eta$, then go back to
      step~3(a).
    \end{enumerate}
  \item If $t=D<t_1$, then sample $\gamma\sim\dgamma(D,1)$ conditional
    on $\gamma\le z$.  Set $x=\gamma$, $v=0$.
  \item The rest is the same as steps~6--8 in
    Table~\ref{a:sub-fpt}.
\end{display}

To justify the algorithm, denote $a(t) = b(t)\wedge r$.  From
\eqref{eq:dist-ft} and $S(t)\sim\dgamma(t,1)$,
\begin{align*}
  \pr\{\fpt^S_a \le t\}
  = \pr\{S(t)\ge a(t)\} = \nth{\Gamma(t)} \int_{a(t)}^\infty x^{t-1}
  e^{-x}\,\dd x,
\end{align*}
which is a continuous and strictly increasing function.  The sampling
in step~2 is then the standard inversion \cite[cf.][]{devroye:86b}.
Given $\fpt^S_a=t_1$ and $\fpt^*=t$, where $\fpt^* = \fpt^S_a\wedge
D$, by step~3 in Table~\ref{a:sub-fpt}, if $t=t_1<D$, then we need to
sample $(S(t-), \jp S(t))$ conditional on $\fpt^S_a=t$.  Let $\pdf_t$
denote the p.d.f.\ of $\dgamma(t,1)$.  Let $z=a(t)$.  From Theorem
\ref{thm:conditional}, for $x,v>0$,
\begin{align*}
  &
  \pr\{S(t-)\in \dd x, \jp S(t)\in \dd v\gv \fpt^S_a = t\} \\
  &\propto
  |a'(t)| g_t(z) \delta(\dd x - z) \delta(\dd v) 
  + \cf{0\le z-x<v} g_t(x) \frac{e^{-v}}{v}\,\dd x \,\dd v \\
  &
  \propto
  |a'(t)| \delta(\dd x - z)\delta(\dd v) + q_1(x,v)\,\dd x\,\dd v +
  q_2(x,v)\,\dd x\,\dd v,
\end{align*}
where, letting $q(x,v) = g_t(x)e^{-v}/[v g_t(z)]$, $q_1(x,v) =
\cf{0\le z-x<v\le z} q(x,v)$ and $q_2(x,v) = \cf{0\le z-x< z\le v}
q(x,v)$.  Now $q(x,v) = (x/z)^{t-1} e^{z-x-v}/v$.  Let
\begin{align*}
  \rho_1(x,v)
  &
  = \cf{0\le z-x<v\le z}
  \frac{(x/z)^{t-1} (1-x/z)^{-1/2}}{B(t,1/2) z} \,
  \nth{v \ln[(1-x/z)^{-1}]}, \\
  \rho_2(x,v)
  &
  = \cf{0\le z-x<z\le v} \frac{t(x/z)^{t-1} e^{z-v}}{z}.
\end{align*}
For $i=1,2$, $\rho_i(x,v)$ is a p.d.f.\ and $q_i(x,v) = w_i h_i(x,v)
\rho_i(x,v)$, where $w_i$ and $h_i$ are defined in the algorithm.  It
is easy to check $h_i(x,v)\le 1$ for $i=1,2$.  Define  $h_0(x,v)\equiv
1$.  Define $(\iota, \zeta, V)$ such that $\iota\in \{0,1,2\}$ with
$\pr\{\iota = i\} = w_i/(w_0+w_1+w_2)$, conditional on $\iota=0$,
$\zeta = z$ and $V=0$, and conditional on $\iota=i\in \{1,2\}$,
$(\zeta, V)$ has p.d.f.\ $\rho_i$.  Then
\begin{align*}
  \pr\{S(t-)\in\dd x,\, \jp S(t)\in \dd v\gv \fpt^S_a =t\}
  = \int h_i(x,v) \pr\{\iota\in\,\dd i, \zeta\in\,\dd x, V\in\,\dd
  v\},
\end{align*}
where the integral is only over $i$.  It is easy to check that when
$(\zeta, V)$ has p.d.f.\ $\rho_1$, then $(\zeta, V) \sim (z\beta_t,
z(1-\beta_t)^U)$, where $\beta_t\sim \dbeta(t,1/2)$ and $U\sim
\dunif(0,1)$, and when $(\zeta, V)$ has p.d.f.\ $\rho_2$, then
$(\zeta, V)\sim (z\beta_t', z+\xi)$, where $\beta_t'\sim \dbeta(t,1)$
and $\xi\sim \dexp(1)$.  Then step~3 in the algorithm is a rejection
sampling procedure of $(S(t-), \jp S(t)$ conditional on $\fpt^S_a=t$.
Since $S(t-)$ sampled by step~3 or 4 is exactly $X_1(t-)$, there is no
need for a counterpart of step~5 in Table~\ref{a:sub-fpt}.  We can
directly go to steps~6--8 in Table~\ref{a:sub-fpt}.

\setcounter{equation}{0}
\renewcommand{\theequation}{A.\arabic{equation}}
\renewcommand{\thetheorem}{A.\arabic{theorem}}

\section*{Appendix}
To prove Lemmas \ref{lemma:cdc1} and \ref{lemma:cdc2}, we start with
two more lemmas. 
\begin{lemma} \label{lemma:L-u-c}
  For $t>0$ and $0<a<b<\infty$, let
  \begin{align*}
    L_1(t,a,b)
    =\int_0^a [\dual\Ldf_S(a-u) - \dual\Ldf_S(b-u)] \pdf_t(u)\,\dd u,
    \quad
    L_2(t, a, b)
    =\int_a^b \dual\Ldf_S(b-u) \pdf_t(u)\,\dd u.
  \end{align*}
  Then for any $E=[t_0, t_1]\subset\prs$ and $I=[\alpha,
  \beta]\subset\prs$,
  \begin{align} \label{eq:L-u-c}
    \lim_{r\dt 0} \sup\Cbr{
    L_i(t,a,b): t\in E,\, a,b\in I,\, 0\le b-a\le r} =0, \quad i=1,2.
  \end{align}
\end{lemma}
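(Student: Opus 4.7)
The plan is to treat $L_2$ and $L_1$ separately, exploiting the monotonicity of $\dual\Ldf_S$, the integrability $\int_0^\cdot \dual\Ldf_S(v)\,\dd v<\infty$ (which follows from $\int (x\wedge 1)\Ldf_S(\dd x)<\infty$), and local boundedness of $\pdf_t$ on compact sets inside $\prs\times\prs$.

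For $L_2$, substitute $v=b-u$ to get $L_2(t,a,b) = \int_0^{b-a}\dual\Ldf_S(v)\pdf_t(b-v)\,\dd v$. If $r<\alpha/2$, then $b-v\in[\alpha/2,\beta]$ for all admissible $b,v$, so by the continuous density condition $\pdf_t(b-v)$ is bounded by some $M=\sup\{\pdf_t(x):t\in E, x\in[\alpha/2,\beta]\}<\infty$, yielding $L_2\le M\int_0^{b-a}\dual\Ldf_S(v)\,\dd v$, which tends to $0$ as $b-a\dt 0$.

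For $L_1$, set $h=b-a$ and substitute $v=a-u$ to obtain
\begin{align*}
L_1(t,a,b)=\int_0^a[\dual\Ldf_S(v)-\dual\Ldf_S(v+h)]\pdf_t(a-v)\,\dd v.
\end{align*}
The difficulty is that $\pdf_t(a-v)$ may blow up as $v\uparrow a$. To handle this, fix a small $\rx\in(0,\alpha/2)$ and split the integral at $a-\rx$. On $[0,a-\rx]$, $a-v\in[\rx,\beta]$ so $\pdf_t(a-v)\le M_\rx:=\sup\{\pdf_t(x):t\in E,x\in[\rx,\beta]\}<\infty$; since $\int_0^a\dual\Ldf_S(v)\,\dd v - \int_h^{a+h}\dual\Ldf_S(v)\,\dd v \le \int_0^h\dual\Ldf_S(v)\,\dd v$, that piece is bounded by $M_\rx\int_0^h\dual\Ldf_S(v)\,\dd v$. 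On $[a-\rx,a]$, use monotonicity $\dual\Ldf_S(v)\le\dual\Ldf_S(a-\rx)\le\dual\Ldf_S(\alpha/2)$ and the change of variable $u=a-v$ to bound that piece by $\dual\Ldf_S(\alpha/2)\int_0^\rx\pdf_t(u)\,\dd u=\dual\Ldf_S(\alpha/2)\pr\{S(t)\le\rx\}$.

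The key observation that makes the argument work is that $\pr\{S(t)\le\rx\}$ is non-increasing in $t$ (since $S$ is a subordinator), so $\sup_{t\in E}\pr\{S(t)\le\rx\}=\pr\{S(t_0)\le\rx\}\dt 0$ as $\rx\dt 0$, because the distribution of $S(t_0)$ is continuous under \eqref{eq:inf-levy} \cite[Theorem 27.4]{sato:99}. Given any $\eta>0$, first choose $\rx$ so that $\dual\Ldf_S(\alpha/2)\pr\{S(t_0)\le\rx\}<\eta/2$, then choose $r$ so that $M_\rx\int_0^r\dual\Ldf_S(v)\,\dd v<\eta/2$; then for $0\le b-a\le r$ and $t\in E$, $a,b\in I$, $L_1(t,a,b)<\eta$. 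The main technical obstacle is precisely this control of the singularity of $\pdf_t$ at $0$, which is resolved by the uniform-in-$t$ continuity of the distribution function of $S(t_0)$ at $0$ combined with the monotonicity of the subordinator.
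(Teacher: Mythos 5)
Your proof is correct and takes essentially the same route as the paper's: for $L_1$ you split at distance $\rx$ from the density's possible singularity at $0$, bound that piece by $\dual\Ldf_S(\alpha/2)\,\pr\{S(t_0)\le\rx\}$ using monotonicity of the subordinator and the continuity of the law of $S(t_0)$, bound the remaining piece by a uniformly bounded density times $\int_0^r\dual\Ldf_S(v)\,\dd v$, and choose $\rx$ before $r$ — exactly the paper's $J_1$/$J_2$ decomposition. Your handling of $L_2$ (density bounded on $[\alpha,\beta]$ times $\int_0^{b-a}\dual\Ldf_S(v)\,\dd v=\int (x\wedge(b-a))\,\Ldf_S(\dd x)\to 0$) is a mildly more direct version of the paper's Fubini split of $\Ldf_S$ at $\rx$, resting on the same facts.
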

\begin{proof}
  Since
  $\dual\Ldf_S$ is non-increasing, for $0\le b-a\le r$, $L_1(t,a,b) \le
  L_1(t,a,a+r)$.  Fix $\rx\in (0,\alpha/2)$ and let $h(u) =
  [\dual\Ldf_S(a-u) -  \dual\Ldf_S(a+r-u)]\pdf_t(u)$.  Then 
  $L_1(t,a,a+r) = J_1 + J_2$, where $J_1=J_1(t,a,r) =  \int_0^\rx h$,
  $J_2=J_2(t, a,r) = \int_\rx^a h$.  We have
  \begin{align*}
    J_1 \le \int_0^\rx \dual\Ldf_S(a-u)\pdf_t(u)\,\dd u
    &
    \le \dual\Ldf_S(a-\rx) \int_0^\rx \pdf_t(u)\,\dd u\\
    &= \dual \Ldf_S(\alpha - \rx) \pr\Cbr{S(t)\le \rx}
    \le \dual \Ldf_S(\alpha - \rx) \pr\Cbr{S(t_0)\le \rx}
  \end{align*}
  and letting $M = \sup\{\pdf_t(u): t\in E,\, \rx\le u\le\beta\}$,
  \begin{align*}
    J_2 \le
    M\int_\rx^a [\dual\Ldf_S(a-u) - \dual\Ldf_S(a-u+r)] \,\dd u
    \le M\int_0^\beta [\dual\Ldf_S(u)-\dual\Ldf_S(u+r)]\,\dd u,
  \end{align*}
  By the assumption on $\pdf_t(x)$, $M<\infty$.  Also, $\int_0^\beta
  \dual\Ldf_S(u)\,\dd u  = \int_0^\infty (v\wedge\beta) \Ldf_S(\dd v)
  < \infty$.  Then by monotone convergence, as $r\dt 0$, $J_2\to 0$
  uniformly for $(t,a)\in E\times I$.  As a result, for $L_1$, the
  limit in \eqref{eq:L-u-c} is no greater than $\dual\Ldf_S(\alpha -
  \rx) \pr\Cbr{S(t_0)\le \rx}$.  Since $\pr\{S(t_0)>0\}=1$ and $\rx$
  is arbitrary, the limit is equal to 0.  Thus \eqref{eq:L-u-c} holds
  for $L_1$.

  Next, fixing $\rx\in (0,\alpha)$, by change of variable,
  for $t\in E$, $a$, $b\in I$ with $0\le b-a\le r$,
  \begin{align*}
    L_2(t,a,b)
    = \int_0^\infty \Ldf_S(\dd x) \int_{a\vee(b-x)}^b
    \pdf_t 
    &\le \int_0^\rx \Ldf_S(\dd x)\int_{b-x}^b \pdf_t
    + \int_\rx^\infty \Ldf_S(\dd x) \int_a^b \pdf_t \\
    &\le
    M' \Sbr{\int_0^\rx x\Ldf_S(\dd x) +r \dual\Ldf_S(\rx)},
  \end{align*}
  where $M' = \sup\{\pdf_t(u): t\in E, \alpha - \rx\le u\le\beta\}$.
  Therefore, as $r\dt 0$, the limit for $L_2$ in \eqref{eq:L-u-c} is
  no greater than $M' \int_0^\rx x\Ldf_S(\dd x)$.  Since $\rx$ is
  arbitrary, the limit is 0. 
\end{proof}

\begin{lemma} \label{lemma:H}
  Let $h$ be a bounded function on $\prs\times \prs$.  For $a$,
  $t\in\prs$, define
  \begin{align*}
    H(a,t) = \int \cf{u\le a< x}
    h(u,x)\,\pr\!\Cbr{S(t)\in\,\dd u}\Ldf_S(\dd x-u).
  \end{align*}
  Then under the continuous density condition, $H$ is continuous on
  $\prs\times\prs$.
\end{lemma}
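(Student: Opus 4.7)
The plan is to fix $(a_0, t_0) \in \prs \times \prs$ and establish continuity at this point via the decomposition
\[
H(a,t) - H(a_0, t_0) = [H(a,t) - H(a_0, t)] + [H(a_0, t) - H(a_0, t_0)],
\]
handling the $a$-variation and the $t$-variation separately. Throughout, let $M = \sup |h| < \infty$.

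For the $a$-variation (first bracket), assume without loss of generality $a < a_0$; the reverse case is symmetric. The key is the pointwise identity
\[
\cf{u \le a_0 < x} - \cf{u \le a < x} = \cf{u \le a}\cf{a < x \le a_0} + \cf{a < u \le a_0}\cf{x > a_0},
\]
which, after integration against $\pdf_t(u)\,\dd u\,\Ldf_S(\dd x - u)$ and the bound $|h| \le M$, yields
\[
|H(a_0, t) - H(a, t)| \le M\bigl[L_1(t, a, a_0) + L_2(t, a, a_0)\bigr],
\]
where $L_1, L_2$ are precisely the functions in Lemma~\ref{lemma:L-u-c}. Taking $E$ a compact neighborhood of $t_0$ and $I$ a compact interval containing $a, a_0$, that lemma forces $|H(a_0, t) - H(a, t)| \to 0$ as $a \to a_0$, uniformly for $t \in E$.

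For the $t$-variation (second bracket) at fixed $a_0$, write
\[
H(a_0, t) - H(a_0, t_0) = \int_0^{a_0} [\pdf_t(u) - \pdf_{t_0}(u)]\, \phi(u)\, \dd u,
\]
where $\phi(u) := \int \cf{u + y > a_0}\, h(u, u+y)\, \Ldf_S(\dd y)$ satisfies $|\phi(u)| \le M\, \dual\Ldf_S(a_0 - u)$. The continuous density condition gives $\pdf_t(u) \to \pdf_{t_0}(u)$ pointwise and, by continuity on the compact set $[t_0 - \rx, t_0+\rx] \times [0, a_0]$, a uniform bound $\pdf_t(u) \le K$. Since a change of variable shows
\[
\int_0^{a_0} \dual\Ldf_S(a_0 - u)\,\dd u = \int_0^\infty (y \wedge a_0)\, \Ldf_S(\dd y) < \infty
\]
by the standard integrability of a L\'evy measure near $0$, dominated convergence delivers $H(a_0, t) \to H(a_0, t_0)$ as $t \to t_0$.

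The main obstacle is isolating the contributions to $H(a_0,t) - H(a,t)$ arising from the discontinuity of the indicator $\cf{u \le a < x}$ near the diagonal $u = x$, where $\Ldf_S$ is potentially infinite. The decomposition above separates the two sources — the $x$-threshold moving while $u$ stays in $[0,a]$ (giving $L_1$), and $u$ slipping into $(a, a_0]$ (giving $L_2$) — so the work reduces cleanly to invoking Lemma~\ref{lemma:L-u-c}.
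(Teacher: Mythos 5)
Your $a$-variation argument matches the paper exactly: the indicator identity, the resulting bound by $L_1 + L_2$, and the appeal to Lemma~\ref{lemma:L-u-c} on a compact rectangle are all as in the published proof. The gap is in the $t$-variation. You invoke ``continuity on the compact set $[t_0-\rx, t_0+\rx]\times[0,a_0]$'' to obtain a uniform bound $\pdf_t(u)\le K$. But the continuous density condition only asserts that $(t,x)\mapsto\pdf_t(x)$ is continuous on $\prs\times\prs=(0,\infty)^2$; the set $[t_0-\rx, t_0+\rx]\times[0,a_0]$ is not a subset of that domain, since it includes $u=0$. For the Gamma process --- one of the paper's headline examples in Section~\ref{sec:gamma} --- the density $\pdf_t(u) = u^{t-1}e^{-u}/\Gamma(t)$ is unbounded as $u\dt 0$ whenever $t<1$, so the claimed uniform bound $K$ simply does not exist. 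Your proposed dominating function $2KM\,\dual\Ldf_S(a_0-u)$ is therefore unavailable, and the dominated-convergence step does not go through.

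The paper circumvents this by splitting the integral at $a-\rx$ rather than bounding the density on all of $[0,a_0]$. On $[a-\rx,a]$, the argument $u$ of $\pdf_t$ is bounded away from $0$ (one takes $\rx<\alpha\le a$), so the density is bounded on a compact subset of $\prs\times\prs$, and the small factor is $\int_0^\rx \dual\Ldf_S(u)\,\dd u$. On $[0,a-\rx]$, the density may be unbounded, but $\dual\Ldf_S(a-u)\le\dual\Ldf_S(\rx)$ is bounded there, and smallness comes from Scheff\'e's theorem: pointwise convergence of the densities $\pdf_t\to\pdf_s$, each integrating to $1$, gives $\int|\pdf_t-\pdf_s|\to 0$. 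Combining the two pieces and then sending $\rx\to 0$ closes the estimate. To repair your proof, replace the uniform-bound-plus-DCT step with this two-region split and the Scheff\'e argument (or some other device that tolerates a possible blow-up of $\pdf_t$ near $u=0$).
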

\begin{proof}
  Without loss of generality, suppose $|h(u,x)|\le 1$.  It suffices to
  show $H$ is continuous on any $R=[\alpha, \beta]\times [t_0,
  t_1]\subset \prs\times \prs$.  Let $(a,s)$, $(b,t)\in R$.  Then
  \begin{align*}
    |H(b,t)-H(a,s)|\le |H(b,t)-H(a,t)|+|H(a,t)-H(a,s)|
  \end{align*}
  Let $L_1$ and $L_2$ be as in Lemma \ref{lemma:L-u-c}.  Let
  $a'=a\wedge b$ and $b'=a\vee b$.  Then
  \begin{multline*}
    |H(b,t)-H(a,t)| 
    \le\int
    |\cf{u\le b < x} - \cf{u\le a< x}|\,
    \pr\Cbr{S(t)\in\dd u}\Ldf_S(\dd x-u)\\
    \le \int \Grp{
      \cf{u\le a'< x\le b'} + \cf{a'< u\le b'< x}
    }
    \pr\Cbr{S(t)\in\dd u} \Ldf_S(\dd x-u).
  \end{multline*}
  The right hand side is $L_1(t,a',b') + L_2(t,a',b')$.  Then by Lemma
  \ref{lemma:L-u-c}, as $(b,t)\to (a,s)$,  $L_1(t,a',b') +
  L_2(t,a',b')\to 0$, giving $H(b,t) - H(a,t)\to 0$.  

  It only remains to show $H(a,t)-H(a,s)\to 0$.  Given $\rx\in
  (0,\alpha)$, let $M=\sup\{\pdf_t(u): u\in [\alpha-\rx, \beta], t\in
  [t_0, t_1]\}$.  Then
  \begin{align*}
    |H(a,t)-H(a,s)|
    &\le \int\cf{u\le a<x} |\pdf_t(u)-\pdf_s(u)|\Ldf_S(\dd x-u)\,\dd u\\
    &
    = \int\cf{u\le a} |\pdf_t(u)-\pdf_s(u)|\dual\Ldf_S(a-u)\,\dd u.
  \end{align*}
  Bounding the integral on $[a-\rx,a]$ and $[0,a-\rx]$ separately,
  we obtain
  \begin{align*}
    |H(a,t)-H(a,s)|\le 2 M \int_0^\rx \dual\Ldf_S(u)\,\dd u
    + \dual\Ldf_S(\rx) \int |\pdf_t(u) - \pdf_s(u)|\,\dd u,
  \end{align*}
  Let $t\to s$.  Since point-wise convergence of $\pdf_t$ to $\pdf_s$
  implies convergence in total variation, $\Lsup|H(a,t)-H(a,s)|\le 2
  M \int_0^\rx\dual\Ldf_S(u)\,\dd u < \infty$.  Letting $\rx\to 0$
  gets $H(a,t)-H(a,s)\to 0$.
\end{proof}

\begin{proof}[Proof of Lemma \ref{lemma:cdc1}]
  Apply \eqref{eq:time-density} and Lemma \ref{lemma:H}, with
  $h(a,t)\equiv 1$ therein.
\end{proof}

\begin{proof}[Proof of Lemma \ref{lemma:cdc2}]
  Let $G_c = \Cbr{t>0: c(t)>0}$.  Then $G_c$ is a non-empty open
  interval and $\pr\Cbr{\fpt_c\in G_c}=1$.  To prove the lemma,
  it suffices to show that for any $0<t_0 < t_1<\infty$ with
  $[t_0, t_1]\subset G_c$ and $A\subset (t_0, t_1)$ with $\ell(A) =
  c(A) =0$, $\pr\Cbr{\fpt_c\in A}=0$.  Let $\alpha = c(t_1)$ and $\beta =
  c(t_0)$.  Given $\rx>0$, $A$ can be covered by at most countably
  many disjoint intervals $(a_i, b_i)\subset (t_0, t_1)$ such that
  $\sum (b_i-a_i)<\rx$ and $\sum [c(a_i) - c(b_i)]<\rx$.   For each
  $i$,
  \begin{align*}
    \pr\Cbr{\fpt_c\in (a_i,b_i)}
    &
    \le \pr\Cbr{S(a_i)\le c(a_i),\, S(b_i)>c(b_i)}\\
    &
    \le \pr\Cbr{c(b_i)<S(a_i)\le c(a_i)} + \pr\Cbr{S(a_i)\le
      c(b_i)<S(b_i)}
    \\
    &=
    \pr\Cbr{c(b_i)<S(a_i)\le c(a_i)} + \pr\Cbr{\fpt_{c(b_i)} \in
      (a_i,b_i)}.
  \end{align*}
  Since $a_i, b_i\in [t_0, t_1]$ and $c(a_i), c(b_i)\in [\alpha,
  \beta]$, by the continuous density condition and Lemma
  \ref{lemma:cdc1}, the right hand side is no greater than $M_1
  [c(a_i)-c(b_i)] + M_2 (b_i-a_i)$, where $M_1 = \sup \pdf_t(x)$ and
  $M_2=\sup \psi_x(t)$ over $(t,x)\in [t_0, t_1]\times [\alpha,
  \beta]$.  Therefore,  $\pr\Cbr{\fpt_c\in A} \le \sum
  \pr\Cbr{\fpt_c\in (a_i,b_i)} \le (M_1+M_2)\rx$. Since $\rx$ is
  arbitrary, this yields the proof.
\end{proof}

\bibliographystyle{acmtrans-ims}
\bibliography{Levy,ldpdb,MCMC}

\end{document}